\newtheoremstyle{citing}% name
  {3pt}%      Space above, empty = `usual value'
  {3pt}%      Space below
  {\itshape}% Body font
  {}%         Indent amount (empty = no indent, \parindent = para indent)
  {\bfseries}% Thm head font
  {.}%        Punctuation after thm head
  {.5em}%     Space after thm head: " " = normal interword space;
\theoremstyle{citing}
\newtheorem*{citing}{}
\theoremstyle{definition}
\theoremstyle{plain}
\newtheorem{theorem}{Theorem}[section]
\newtheorem{lemma}[theorem]{Lemma}
\newtheorem{corollary}[theorem]{Corollary}
\theoremstyle{remark}
\newtheorem{remark}[theorem]{Remark}
\theoremstyle{definition}
\newtheorem{definition}[theorem]{Definition}
\newtheorem{miniremark}[theorem]{}
\newcounter{counter1}
\newcommand{\printRoman}[1]{\setcounter{counter1}{#1}\Roman{counter1}}
\newcommand{\meancurv}[1]{\vec{H}_{#1}}
\newcommand{\setclassification}[3]{\left \{ {#2} \in {#1} \with {#3} \right \}}
\newcommand{\bigsetclassification}[3]{\big \{ {#2} \in {#1} \with {#3} \big \}}
\newcommand{\mutau}[2]{\eta_{{#2},{#1}}}
\newcommand{\oball}[3]{B^{#1}_{#3} ( #2 )}
\newcommand{\cball}[3]{\bar{B}^{#1}_{#3} ( #2 )}
\newcommand{\nat}{\mathbb{N}}
\newcommand{\rel}{\mathbb{R}}
\newcommand{\grass}[2]{G(#1,#2)}
\newcommand{\perpproject}[1]{#1^\perp}
\newcommand{\project}[1]{#1}
\newcommand{\eqproject}[1]{#1}
\newcommand{\pluslim}[1]{\downarrow {#1}}
\newcommand{\density}{\theta}
\newcommand{\card}{\#}
\newcommand{\unitmeasure}[1]{\omega_{#1}}
\newcommand{\besicovitch}[1]{N(#1)}
\newcommand{\isoperimetric}[1]{\gamma_{#1}}
\newcommand{\cylinder}[4]{C ( {#4}, {#1}, {#2}, {#3} )}
\newcommand{\qspace}{Q}
\newcommand{\Clos}[1]{\overline{#1}}
\newcommand{\id}[1]{\mathbbm{1}_{#1}}
\newcommand{\nunion}[2]{\bigcup_{#2} #1}
\newcommand{\union}[2]{{\textstyle\bigcup_{#2}} #1}
\newcommand{\bunion}[2]{{\textstyle\bigcup_{#2}} #1}
\newcommand{\eqpushmeasure}[2]{#1(#2)}
\newcommand{\pushmeasure}[2]{#1#2}
\newcommand{\heighti}{h}
\newcommand{\tilti}{t}
\newcommand{\height}{H}
\newcommand{\tilt}{T}
\newcommand{\ccspace}[1]{C_\mathrm{c}^0 (#1)}
\newcommand{\pairing}[2]{{\textstyle\int} #2 \ud #1}
\newcommand{\plustrans}[1]{\eta_{-{#1},1}}
\newcommand{\minustrans}[1]{\eta_{{#1},1}}
\newcommand{\measureball}[2]{{#1}({#2})}
\newcommand{\Lp}[1]{L^{#1}}
\newcommand{\lIm}{(}
\newcommand{\rIm}{)}
\newcommand{\Lbrack}{[\![}
\newcommand{\Rbrack}{]\!]}
\newcommand{\vdim}{n}
\newcommand{\codim}{m}
\newcommand{\adim}{{n+m}}
\newcommand{\curv}{\psi}
\newcommand{\hoelder}{\alpha}
\newcommand{\ud}{\ensuremath{\,\mathrm{d}}}
\newcommand{\restrict}{\mathop{\llcorner}}
\DeclareMathOperator{\without}{\sim}
\DeclareMathOperator{\tiltex}{tiltex}
\DeclareMathOperator{\heightex}{heightex}
\DeclareMathOperator{\ap}{ap}
\DeclareMathOperator{\Tan}{Tan}
\DeclareMathOperator{\spt}{spt}
\DeclareMathOperator{\graph}{graph}
\DeclareMathOperator{\im}{im}
\DeclareMathOperator{\with}{:}
\DeclareMathOperator{\diam}{diam}
\DeclareMathOperator{\Lip}{Lip}
\DeclareMathOperator{\dmn}{dmn}
\DeclareMathOperator{\dist}{dist}
\DeclareMathOperator{\Hom}{Hom}
\DeclareMathOperator{\Div}{div}
\begin{document}

\title{A Sobolev Poincar\'e type inequality \\ for integral varifolds\thanks{The
author acknowledges financial support via the Forschergruppe no. 469 of the
Deutsche Forschungsgemeinschaft. The research was carried out while the author
was a PhD student at the University of T\"ubingen and put in its final form
while the author was at the AEI Golm and the ETH Z\"urich.
\textit{AEI publication number:} AEI-2008-064.}}

\author{Ulrich Menne}
\maketitle

\begin{abstract}
	In this work a local inequality is provided which bounds the distance
	of an integral varifold from a multivalued plane (height) by its tilt
	and mean curvature. The bounds obtained for the exponents of the
	Lebesgue spaces involved are shown to be sharp.
\end{abstract}

\section*{Introduction}
Regularity of integral varifolds is often investigated by use of an
approximation by Lipschitzian single or multivalued functions. A basic
property of such functions is the Sobolev Poincar\'e inequality. In this paper
a similar inequality is established for the varifold itself. An inequality of
this type has to involve mean curvature as simple examples demonstrate.
Considering a ball centered at a generic point and taking the limit as the
radius approaches $0$, the contribution of the mean curvature drops out if and
only if the exponents  of the Lebesgue spaces involved satisfy a certain
inequality. The initial motivation to examine the validity of a Poincar\'e
type inequality was given by a question arising from Sch\"atzle's work in
\cite{rsch:willmore}, see below.

\paragraph{Basic definitions.} First, some definitions will be recalled.
Suppose throughout the introduction that $\codim, \vdim \in \nat$ and $U$ is a
nonempty, open subset of $\rel^\adim$. Using \cite[Theorem 11.8]{MR87a:49001}
as a definition, $\mu$ is a rectifiable [an integral] $\vdim$ varifold in $U$
if and only if $\mu$ is a Radon measure on $U$ and for $\mu$ almost all $x \in
U$ there exists an approximate tangent plane $T_x \mu \in \grass{\adim
}{\vdim}$ with multiplicity $0 < \density^\vdim ( \mu, x ) < \infty$ of $\mu$
at $x$ [and $\density^\vdim ( \mu, x) \in \nat$], $\grass{\adim}{\vdim}$
denoting the set of $\vdim$ dimensional, unoriented planes in $\rel^\adim$.
The distributional first variation of mass of $\mu$ equals
\begin{gather*}
	( \delta \mu ) ( \eta ) = {\textstyle\int} \Div_\mu \eta \ud \mu \quad
	\text{whenever $\eta \in C_\mathrm{c}^1 ( U, \rel^\adim )$}
\end{gather*}
where $\Div_\mu \eta (x)$ is the trace of $D\eta(x)$ with respect to $T_x
\mu$. $\| \delta \mu \|$ denotes the total variation measure associated to
$\delta \mu$ and $\mu$ is said to be of locally bounded first variation if and
only if $\| \delta \mu \|$ is a Radon measure. The tilt-excess and the
height-excess of $\mu$ are defined by
\begin{align*}
	\tiltex_\mu ( x, \varrho, T ) & := \varrho^{-\vdim}
	{\textstyle\int_{\oball{}{x}{\varrho}}} | \eqproject{T_\xi \mu} -
	\project{T} |^2 \ud \mu ( \xi ), \\
	\heightex_\mu ( x, \varrho, T ) & := \varrho^{-\vdim-2}
	{\textstyle\int_{\oball{}{x}{\varrho}}} \dist ( \xi - x , T )^2 \ud
	\mu ( \xi )
\end{align*}
whenever $x \in \rel^\adim $, $0 < \varrho < \infty$, $\oball{}{x}{\varrho}
\subset U$, $T \in \grass{\adim }{\vdim}$; here $S \in \grass{\adim }{\vdim}$
is identified with the orthogonal projection of $\rel^\adim $ onto $S$ and $|
\cdot |$ denotes the norm induced by the usual inner product on $\Hom (
\rel^\adim  , \rel^\adim  )$. From the above definition of a rectifiable $n$
varifold $\mu$ one obtains that $\mu$ almost all of $U$ is covered by a
countable collection of $n$ dimensional submanifolds of $\rel^\adim$ of class
$\mathcal{C}^1$. This concept is extended to higher orders of
differentiability by adapting a definition of Anzellotti and
Serapioni in \cite{MR1285779} as follows: A rectifiable $n$ varifold $\mu$ in
$U$ is called countably rectifiable of class $\mathcal{C}^{k,\hoelder}$
[$\mathcal{C}^k$], $k \in \nat$, $0 < \hoelder \leq 1$, if and only if there
exists a countable collection of $\vdim$ dimensional submanifolds of
$\rel^\adim $ of class $\mathcal{C}^{k,\hoelder}$ [$\mathcal{C}^k$] covering
$\mu$ almost all of $U$. Throughout the introduction this will be abbreviated
to $\mathcal{C}^{k,\hoelder}$ [$\mathcal{C}^k$] rectifiability. Note that
$\mathcal{C}^{k,1}$ rectifiability and $\mathcal{C}^{k+1}$ rectifiability
agree by \cite[3.1.15]{MR41:1976}.

\paragraph{Known results.} Decays of tilt-excess or height-excess have been
successfully used by Allard, Brakke and Sch\"atzle in
\cite{MR0307015,MR485012,MR2064971,rsch:willmore}. The link to $\mathcal{C}^2$
rectifiability is provided by Sch\"atzle in \cite{rsch:willmore}.  In order to
explain some of these results, a mean curvature condition is introduced. An
integral $\vdim$ varifold in $U$ is said to satisfy \eqref{eqn_hp}, $1 \leq p
\leq \infty$, if and only if either $p > 1$ and for some $\meancurv{\mu} \in
\Lp{p}_\mathrm{loc} ( \mu, \rel^\adim)$, called the generalised mean curvature
of $\mu$,
\begin{gather} \label{eqn_hp}
	( \delta \mu ) ( \eta ) = - {\textstyle\int} \meancurv{\mu}
	\bullet \eta \ud \mu \quad \text{whenever $\eta \in
	C_\mathrm{c}^1 ( U, \rel^\adim  )$} \tag{$H_p$}
\end{gather}
or $p=1$ and
\begin{gather} \label{eqn_h1}
	\text{$\mu$ is of locally bounded first variation} \tag{$H_1$};
\end{gather}
here $\bullet$ denotes the usual inner product on $\rel^\adim $. Brakke has
shown in \cite[5.7]{MR485012} that
\begin{gather*}
	\tiltex_\mu ( x, \varrho, T_x \mu ) = o_x ( \varrho ), \ \heightex_\mu
	( x, \varrho, T_x \mu ) = o_x ( \varrho ) \quad \text{as $\varrho
	\pluslim{0}$}
\end{gather*}
for $\mu$ almost every $x \in U$ provided $\mu$ satisfies \eqref{eqn_h1} and
\begin{gather*}
	\tiltex_\mu ( x, \varrho, T_x \mu ) = o_x ( \varrho^{2-\varepsilon}),\
	\heightex_\mu ( x, \varrho, T_x) = o_x ( \varrho^{2-\varepsilon} )
	\quad \text{as $\varrho \pluslim{0}$}
\end{gather*}
for every $\varepsilon > 0$ for $\mu$ almost every $x \in U$ provided $\mu$
satisfies ($H_2$). In case of codimension $1$ and $p > \vdim$ Sch{\"a}tzle has
proved the following result yielding optimal decay rates.
\begin{citing} [Theorem 5.1 in \protect{\cite{MR2064971}}]
	If $\codim=1$, $p > \vdim$, $p \geq 2$, and $\mu$ is an integral
	$\vdim$ varifold in $U$ satisfying \eqref{eqn_hp}, then
	\begin{gather*}
		\tiltex_\mu (x, \varrho, T_x \mu ) = O_x ( \varrho^2), \
		\heightex_\mu (x, \varrho, T_x \mu ) = O_x ( \varrho^2 ) \quad
		\text{as $\varrho \pluslim{0}$}
		\end{gather*}
	for $\mu$ almost all $x \in U$.
\end{citing}
The importance of the improvement from $2-\varepsilon$ to $2$ stems mainly
from the fact that the quadratic decay of tilt-excess can be used to compute
the mean curvature vector $\meancurv{\mu}$ in terms of the local
geometry of $\mu$ which had already been observed by Sch\"atzle in \cite[Lemma
6.3]{MR1906780}. In \cite{rsch:willmore} Sch{\"a}tzle provides the above
mentioned link to $\mathcal{C}^2$ rectifiability as follows:
\begin{citing} [Theorem 3.1 in \protect{\cite{rsch:willmore}}]
	\label{ithm:willmore}
	If $\mu$ is an integral $\vdim$ varifold in $U$ satisfying ($H_2$)
	then the following two statements are equivalent:
	\begin{enumerate}
		\item \label{item:willmore:rect} $\mu$ is $\mathcal{C}^2$
		rectifiable.
		\item \label{item:willmore:tilt_height} For $\mu$ almost every
		$x \in U$ there holds
		\begin{gather*}
			\tiltex_\mu ( x, \varrho, T_x \mu ) = O_x (
			\varrho^2), \ \heightex_\mu ( x, \varrho, T_x \mu ) =
			O_x ( \varrho^2 ) \quad \text{as $\varrho
			\pluslim{0}$}.
		\end{gather*}
	\end{enumerate}
\end{citing}
The quadratic decay of $\heightex_\mu$ implies $\mathcal{C}^2$ rectifiability
without the condition ($H_2$) as was noted in \cite{rsch:willmore}. However,
\eqref{item:willmore:rect} would not imply \eqref{item:willmore:tilt_height}
if $\mu$ were merely required to satisfy \eqref{eqn_hp} for some $p$ with $1
\leq p < 2\vdim/(\vdim+2)$, an example was be provided in
\cite[1.5]{snulmenn.isoperimetric}. On the other hand, it is evident from the
Caccioppoli type inequality relating $\tiltex_\mu$ to $\heightex_\mu$ and mean
curvature, see e.g. Brakke \cite[5.5]{MR485012}, that quadratic decay of
$\heightex_\mu$ implies quadratic decay for $\tiltex_\mu$ under the condition
($H_2$). This leads to the following question:

\paragraph{Problem.} Does quadratic decay $\mu$ almost everywhere of
$\tiltex_\mu$ imply qua\-dra\-tic decay $\mu$ almost everywhere of
$\heightex_\mu$ under the condition ($H_2$)? More generally, suppose that
$\mu$ is an integral $\vdim$ varifold in $U$ satisfying \eqref{eqn_hp}, $1
\leq p \leq \infty$, and $0 < \hoelder \leq 1$, $1 \leq q < \infty$. Does
\begin{gather*}
	\limsup_{r \pluslim{0}} r^{-\hoelder-\vdim/q} \big (
	{\textstyle\int_{\oball{}{x}{r}}} | \eqproject{T_\xi \mu} -
	\eqproject{T_x \mu} |^q \ud \mu ( \xi ) \big )^{1/q} < \infty
\end{gather*}
for $\mu$ almost all $x \in U$ imply
\begin{gather*}
	\limsup_{r \pluslim{0}} r^{-1-\hoelder-\vdim/q} \big (
		{\textstyle\int_{\oball{}{x}{r}}} \dist ( \xi - x , T_x \mu
		)^q \ud \mu ( \xi ) \big )^{1/q} < \infty
\end{gather*}
for $\mu$ almost all $x \in U$?

\paragraph{Results of the present paper.} The answer to the second question will
be shown in \ref{thm:limit_poincare}--\ref{remark:limit_poincare} to be in the
affirmative if and only if either $p \geq \vdim$ or $p < \vdim$ and $\hoelder
q \leq \vdim p/(\vdim-p)$, yielding in particular a positive answer to
the first question. The main task is to prove the following theorem which in
fact provides a quantitative estimate together with the usual embedding in
$\Lp{q}$ spaces.
\begin{citing} [Theorem \ref{thm:limit_poincare}]
Suppose $Q \in \nat$, $0 < \hoelder \leq 1$, $1 \leq p \leq
\vdim $, and $\mu$ is an integral $\vdim $ varifold in $U$
satisfying \eqref{eqn_hp}.

Then the following two statements hold:
\begin{enumerate}
	\item \label{iitem:limit_poincare:lpq} If $p < \vdim
	$, $1 \leq q_1 < \vdim $, $1 \leq q_2 \leq \min \{
	\frac{\vdim q_1}{\vdim -q_1}, \frac{1}{\hoelder} \cdot
	\frac{\vdim p}{\vdim -p} \}$, then for $\mu$ almost
	all $a \in U$ with $\density^\vdim  ( \mu, a ) = Q$
	there holds
	\begin{gather*}
		\limsup_{r \pluslim{0}} r^{-\hoelder-1-\vdim
		/q_2} \| \dist ( \cdot-a, T_a \mu )
		\|_{\Lp{q_2} ( \mu \restrict \oball{}{a}{r} )}
		\\
		\leq \Gamma_{\eqref{iitem:limit_poincare:lpq}}
		\limsup_{r \pluslim{0}} r^{-\hoelder-\vdim
		/q_1} \| T_\mu - \eqproject{T_a\mu}
		\|_{\Lp{q_1} ( \mu \restrict \oball{}{a}{r} )}
	\end{gather*}
	where $\Gamma_{\eqref{iitem:limit_poincare:lpq}}$ is a
	positive, finite number depending only on $m$, $n$,
	$Q$, $q_1$, and $q_2$.
	\item \label{iitem:limit_poincare:m} If $p = \vdim $,
	$\vdim < q \leq \infty$, then for $\mu$ almost all $a
	\in U$ with $\density^\vdim  ( \mu, a ) = Q$ there
	holds
	\begin{gather*}
		\limsup_{r \pluslim{0}} r^{-\hoelder-1} \|
		\dist ( \cdot-a, T_a\mu ) \|_{\Lp{\infty} (
		\mu \restrict \oball{}{a}{r})} \\
		\leq \Gamma_{\eqref{iitem:limit_poincare:m}}
		\limsup_{r \pluslim{0}} r^{-\hoelder-\vdim /q}
		\| T_\mu - \eqproject{T_a\mu} \|_{\Lp{q} ( \mu
		\restrict \oball{}{a}{r} )}
	\end{gather*}
	where $\Gamma_{\eqref{iitem:limit_poincare:m}}$ is a
	positive, finite number depending only on $m$, $n$,
	$Q$, and $q$.
\end{enumerate}
\end{citing}
Here $T_\mu$ denotes the function mapping $x$ to $T_x\mu$ whenever the latter
exists. The connection to higher order rectifiability is provided by the
following simple adaption of Sch\"atzle \cite[Appendix A]{rsch:willmore} by
use of \cite[\printRoman{6}.2.2.2, \printRoman{6}.2.3.1--3]{MR0290095}.
\begin{citing} [Lemma]
	Suppose $0 < \hoelder \leq 1$, $\mu$ is a rectifiable $\vdim $
	varifold in $U$, and $A$ denotes the set of all $x \in U$ such
	that $T_x \mu$ exists and
	\begin{gather*}
		\limsup_{\varrho \pluslim{0}} \varrho^{-\vdim
		-1-\hoelder} {\textstyle\int_{\oball{}{x}{\varrho}}}
		\dist ( \xi - x, T_x \mu ) \ud \mu ( \xi ) < \infty.
	\end{gather*}

	Then $\mu \restrict A$ is $\mathcal{C}^{1,\hoelder}$
	rectifiable.
\end{citing}
The analog of Theorem \ref{thm:limit_poincare} in the case of weakly
differentiable functions can be proved simply by using the Sobolev Poincar\'e
inequality in conjunction with an iteration procedure. In the present case,
however, the curvature condition is needed to exclude a behaviour like the one
shown by the function $f : \rel \to \rel$ defined by
\begin{gather*}
	f (x) = \sum_{i=0}^\infty ( 2^{-i} ) \chi_{[2^{-i-1} , 2^{-i}[} (x)
	\quad \text{whenever $x \in \rel$}
\end{gather*}
at $0$; in fact an example of this behaviour occurring on a set of positive
$\mathcal{L}^1$ measure is provided by $f^{1/2} \circ g$ where $g$ is the
distance function from a compact set $C$ such that $\mathcal{L}^1 ( C ) > 0$
and
for some $0 < \lambda < 1$
\begin{gather*}
	\liminf_{r \pluslim{0}} r^{-3/2} \mathcal{L}^1 ( [x+\lambda r, x+r[
	\without C ) > 0 \quad \text{whenever $x \in C$}.
\end{gather*}
Therefore the strategy to prove Theorem \ref{thm:limit_poincare} is to provide
a special Sobolev Poincar\'e type inequality for integral varifolds involving
curvature, see Theorem \ref{thm:sobolev_poincare}. In the construction weakly
differentiable functions are replaced by Lipschitzian $Q$ valued functions, a
$Q$ valued function being a function with values in $\qspace_Q ( \rel^{\codim
} ) \cong ( \rel^{\codim } )^Q \! \big  / \!\! \sim$ where $\sim$ is induced
by the action of the group of permutations of $\{ 1, \ldots, Q \}$ on $(
\rel^{\codim } )^Q$.

\paragraph{Method of proof.} Roughly speaking, the construction performed in a
ball $\oball{}{a}{r} \subset U$ proceeds as follows. Firstly, a graphical part
$G$ of $\mu$ in $\oball{}{a}{r}$ is singled out. The complement of $G$ can be
controlled in mass by the curvature, whereas its geometry cannot be controlled
in a suitable way as may be seen from the example in
\cite[1.2]{snulmenn.isoperimetric}
used to demonstrate the sharpness of the curvature condition. On the graphical
part $G$ the varifold $\mu$ might not quite correspond to the graph of a $Q$
valued function but still have ``holes'' or ``missing layers''. Nevertheless,
it will be shown that, on $G$, $\mu$ behaves just enough like a $Q$ valued
function to make it possible to reduce the problem to this case. Finally, for
$Q$ valued functions Almgren's bi Lipschitzian equivalence of $\qspace_Q (
\rel^{\codim })$ to a subset of $\rel^{PQ}$ for some $P \in \nat$ which
is a Lipschitz retract of the whole space directly yields a Sobolev Poincar\'e
inequality. More details about the technical difficulties occurring in the
construction and how they are solved will be given at the beginning of Section
\ref{sect:approx}.

\paragraph{Organisation of the paper.} In Section \ref{sect:Q_valued} some
basic properties of $Q$ valued functions are provided. In Section
\ref{sect:approx} the approximation of $\mu$ by a $Q$ valued function is
constructed. In Section \ref{sect:poincare} the approximation is used to prove
the Theorems \ref{thm:sobolev_poincare} and \ref{thm:limit_poincare}.

The results have been previously published in the author's PhD thesis, see
\cite{mydiss}.

\paragraph{Additional notation.} The notation follows \cite{MR87a:49001} and,
concerning $Q$ valued functions, Almgren \cite[1.1\,(1),\,
(9)--(11)]{MR1777737}.  In particular, the functions $\mutau{r}{a} :
\rel^\adim \to \rel^\adim$ are given by $\mutau{r}{a}(x) = r^{-1} (x-a)$ for
$a,x \in \rel^\adim$, $0 < r < \infty$ and $\besicovitch{k}$ denotes the best
constant in Besicovitch's covering theorem in $\rel^k$, see \cite[Lemma
4.6]{MR87a:49001}.  Additionally to the symbols already defined, $\im f$ and
$\dmn f$ denote the image and the domain of a function $f$ respectively,
$T^\perp$ is the orthogonal complement of $T$ for $T \in
\grass{\adim}{\vdim}$, $\isoperimetric{\vdim }$ denotes the best constant in
the isoperimetric inequality as defined in Definition \ref{def:isoperimetric},
and $f ( \phi)$ denotes the ordinary push forward of a measure $\phi$ by a
function $f$, i.e. $f(\phi)(A) := \phi (f^{-1}(A))$ whenever $A \subset Y$, if
$\phi$ is a measure on $X$ and $f:X \to Y$. Definitions are denoted by `$=$'
or, if clarity makes it desirable, by `$:=$'. To simplify verification, in
case a statement asserts the existence of a constant, small ($\varepsilon$) or
large ($\Gamma$), depending on certain parameters this number will be referred
to by using the number of the statement as index and what is supposed to
replace the parameters in the order of their appearance given in parentheses,
for example $\varepsilon_{\ref{lemma:lower_density_bound}} ( m,n,1-\delta_3/2
)$. Finally, as in Almgren \cite[T.1\,(23)]{MR1777737} the join $f \Join g$ of
two maps $f : A \to B$ and $g : A\to C$ is defined by $( f \Join g ) (a) =
(f(a),g(a))$ for $a \in A$.

\paragraph{Acknowledgements} The author offers his thanks to Professor Reiner
Sch\"atz\-le for guiding him during the preparation of the underlying
dissertation as well as interesting discussions about various mathematical
topics. The author would also like to thank Professor Tom Ilmanen for his
invitation to the ETH in Z\"urich in 2006, and for several interesting
discussions concerning considerable parts of this work.

\section{Basic facts for $\qspace_Q ( \rel^\codim )$ valued functions}
\label{sect:Q_valued}
The purpose of this section is to collect some results concerning $Q$ valued
functions (cf. Almgren \cite{MR1777737}). Among them is an elementary but
useful decomposition of a Lipschitzian $Q$ valued function into a countable
collection of ordinary Lipschitzian functions in Theorem
\ref{thm:rectifiability_Q_valued_graph}. This decomposition directly entails
both the rectifiability of the $Q$ valued graph which had been proved by
Almgren using the compactness theorem for integral currents and also a simple
proof of Stepanoff's theorem for $Q$ valued functions in Theorem
\ref{thm:Q_valued_stepanoff}. Another proof of the special case of
Rademacher's theorem avoiding Almgren's bi Lipschitzian embedding of
$\qspace_Q ( \rel^\adim  )$ into a Euclidean space based on continuous
selection results can be found in Goblet \cite{MR2248817}. Finally, the
Sobolev Poincar\'e inequality for $Q$ valued functions is given, see Theorem
\ref{thm:poincare_q_valued_ball}.
\begin{definition}
	Whenever $\codim, Q \in \nat$ and $f$ maps $A$ into $\qspace_Q (
	\rel^\codim )$ one defines
	\begin{gather*}
		\graph_Q f = \setclassification{A \times \rel^\codim}{(x,y)}{y
		\in \spt f (x)}.
	\end{gather*}
\end{definition}
\begin{definition}
	For $\codim,\vdim,Q \in \nat$, $a \in \rel^\adim $, $0 \leq r <
	\infty$, $V \in G(\adim ,\codim )$, and $0 < s < 1$ let, see
	\cite[3.3.1]{MR41:1976},
	\begin{gather*}
		X(a,r,V,s) := \{ x \in \rel^\adim  \with s^{-1} \dist (x-a,V)
		< |x-a| < r \}.
	\end{gather*}
\end{definition}
\begin{theorem} \label{thm:rectifiability_Q_valued_graph}
	Suppose $m,n,Q \in \nat$, $A$ is $\mathcal{L}^\vdim$ measurable, and
	$f : A \to \qspace_Q(\rel^{\codim })$ is a Lipschitzian function.  

	Then the following two conclusions hold:
	\begin{enumerate}
		\item \label{item:rectifiability_Q_valued_graph:existence}
		There exists a countable set $I$ and for each $i \in I$ a
		function $f_i : A_i \to \rel^\codim$ such that $A_i$ is
		$\mathcal{L}^\vdim$ measurable, $A_i \subset A$, $\Lip f_i
		\leq \Lip f$ and
		\begin{gather*}
			\card \{ i \with f_i(x)=y \} = \density^0 ( \| f (x)
			\|, y ) \quad \text{whenever $(x,y) \in A \times
			\rel^{\codim }$}.
		\end{gather*}
		If $A$ is a Borel set, then $f_i$ and $A_i$ may be chosen to
		be Borel sets in $\rel^\vdim \times \rel^\codim$ and
		$\rel^\vdim$ respectively.
		\item
		\label{item:rectifiability_Q_valued_graph:differentiability}
		The function $f$ is approximately strongly affinely
		approximable and whenever $I$ and $f_i$ satisfy the conditions
		of \eqref{item:rectifiability_Q_valued_graph:existence}
		\begin{gather*}
			\ap Af(x)(v) = {\textstyle\sum_{i \in I(x)}}
			\mathbb{\Lbrack} f_i (x) + \ap Df_i (x) (v)
			\mathbb{\Rbrack} \quad \text{whenever $v \in
			\rel^\vdim $}
		\end{gather*}
		at $\mathcal{L}^\vdim$ almost all $x \in A$ where $I(x) = \{ i
		\in I \with a \in A_i \}$.
	\end{enumerate}
\end{theorem}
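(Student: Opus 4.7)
The plan is to construct the branches $f_i$ in part~\eqref{item:rectifiability_Q_valued_graph:existence} by a countable Borel selection indexed by rational reference tuples, and then derive part~\eqref{item:rectifiability_Q_valued_graph:differentiability} by applying Rademacher's theorem to each $f_i$ in conjunction with the Lebesgue density theorem.

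The key observation for \eqref{item:rectifiability_Q_valued_graph:existence} is the following matching principle in $\qspace_Q(\rel^\codim)$: given $x, x' \in A$ there exist orderings $f(x) = \sum_{j=1}^Q \Lbrack y_j \Rbrack$ and $f(x') = \sum_{j=1}^Q \Lbrack y_j' \Rbrack$ with $|y_j - y_j'| \leq (\Lip f)|x-x'|$ for every $j$, and whenever the distinct points in $\spt f(x)$ are mutually separated by more than $2(\Lip f)|x-x'|$ the matching $j \mapsto y_j'$ is forced by nearest-point assignment. Consequently, on any subset of $A$ whose $f$-values admit a common labeling that clusters near a fixed rational configuration, the coordinate selections $x \mapsto y_j(x)$ automatically yield single-valued $\Lip f$-Lipschitz functions.

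To globalise this, for each tuple $(q_1,\ldots,q_Q) \in (\mathbb{Q}^\codim)^Q$, each partition of $\{1,\ldots,Q\}$ grouping indices $j$ with identical $q_j$, and each positive rational $\delta$ smaller than a fixed fraction of the minimal separation between the distinct values among $q_1,\ldots,q_Q$, let $A(q_1,\ldots,q_Q,\delta)$ denote the Borel set of those $x \in A$ admitting a labeling $f(x) = \sum_{j=1}^Q \Lbrack y_j \Rbrack$ with $|y_j - q_j| < \delta$ for every $j$; Borel measurability is immediate from the explicit description of the metric on $\qspace_Q(\rel^\codim)$. The matching principle, applied to pairs $x, x'$ in such a set, renders each coordinate selection single-valued and $\Lip f$-Lipschitz on it. Enumerating all such pieces over rational tuples with $\delta = 1/k$, $k \in \nat$, produces the desired countable collection $f_i : A_i \to \rel^\codim$, and every $(x,y) \in A \times \rel^\codim$ is hit precisely $\density^0(\|f(x)\|, y)$ times since each point in $\spt f(x)$ is represented by exactly its multiplicity many coordinate branches.

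For \eqref{item:rectifiability_Q_valued_graph:differentiability}, I would extend each $f_i$ by Kirszbraun to $\rel^\vdim$ and apply the classical Rademacher theorem, obtaining approximate differentiability at $\mathcal{L}^\vdim$ almost every $x \in A_i$. At $x$ that is simultaneously a Lebesgue density point of every $A_i$ with $i \in I(x)$, the Lipschitz continuity of $f$ forces $f(x+v)$ to lie near $\{f_i(x) : i \in I(x)\}$ for small $v$, whence $A \without \bigcup_{i \in I(x)} A_i$ has $\mathcal{L}^\vdim$-density zero at $x$; combining this with the infinitesimal form of the matching principle yields both the approximate strong affine approximability of $f$ and the displayed formula for $\ap A f(x)$. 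I expect the main obstacle to be the construction in \eqref{item:rectifiability_Q_valued_graph:existence}: arranging the countable cover so that the multiplicity identity holds \emph{pointwise} at every $(x,y) \in A \times \rel^\codim$, while simultaneously keeping each selected branch genuinely $\Lip f$-Lipschitz across configurations where clusters nearly collide.
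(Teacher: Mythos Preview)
Your approach to \eqref{item:rectifiability_Q_valued_graph:existence} via rational reference tuples is genuinely different from the paper's. The paper works geometrically with the graph $E=\graph_Q f$: it shows that near each $\xi\in E$ the set $E$ avoids a two-sided cone over $\ker p$ (with aperture determined by $\Lip f$), invokes \cite[3.3.5]{MR41:1976} to write $E$ as a countable union of closed graphs $g_i$ of single-valued functions with $\Lip g_i\le\Lip f$, and then enforces the exact multiplicity identity by disjointifying via $h_{i,\nu}=(g_i\setminus\bigcup_{j<i}g_j)\cap\{(x,y):\density^0(\|f(x)\|,y)=\nu\}$ and listing each $h_{i,\nu}$ exactly $\nu$ times among the $f_i$.

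Your proposal, as written, has two gaps that you yourself flag in the last sentence but do not resolve. First, when some of the $q_j$ coincide the ``coordinate selection'' is not defined: if $q_1=q_2$ and $f(x)$ has two \emph{distinct} values both within $\delta$ of $q_1$, nothing in the matching principle decides which is $y_1(x)$ and which is $y_2(x)$, and no global choice can be Lipschitz in general (Goblet's example cited after the theorem shows continuous selections may fail). Your argument that the selections are $\Lip f$-Lipschitz is correct only on pieces with pairwise distinct $q_j$ and $\delta$ a suitable fraction of their minimal separation; but then a point $x$ with $\card\spt f(x)<Q$ lies in no such piece. Some recursion on cluster sizes is needed and is not supplied. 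Second, enumerating over all tuples and all $\delta=1/k$ gives heavily overlapping domains, so each $x\in A$ lies in far more than $Q$ of the $A_i$; without a disjointification step (and an explicit accounting for multiplicity as in the paper's $h_{i,\nu}$) the pointwise identity $\card\{i:f_i(x)=y\}=\density^0(\|f(x)\|,y)$ fails.

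Your argument for \eqref{item:rectifiability_Q_valued_graph:differentiability} is essentially the paper's: once the $f_i$ exist, one notes $\card I(x)=Q$, that $f=\sum_{i\in I(x)}\Lbrack f_i\Rbrack$ on $\bigcap_{i\in I(x)}A_i$, and applies Rademacher together with the density theorem; the paper adds the observation that $f_i(x)=f_j(x)$ forces $\ap Df_i(x)=\ap Df_j(x)$ almost everywhere, which is what makes the displayed formula well-defined.
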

\begin{proof} [Proof of
\eqref{item:rectifiability_Q_valued_graph:existence}]
	Since the closure of $f$ in $\rel^\vdim \times \qspace_Q (
	\rel^\codim )$ is a Lipschitzian function with the same
	Lipschitz constant, one may assume $A$ to be closed.
	Moreover, assume $\Lip f > 0$ and let $E = \graph_Q f$, $s = (
	1 + (\Lip f)^2)^{-1/2}$, and $p : \rel^\vdim  \times
	\rel^{\codim } \to \rel^\vdim $, $q : \rel^\vdim  \times
	\rel^{\codim } \to \rel^{\codim }$ the projections.

	If $\xi \in E$, $0 < 2r \leq \dist (q(\xi), ( \spt f(p(\xi)) )
	\without \{ q(\xi) \} )$, and $z \in E \cap \oball{}{\xi}{r}$,
	then
	\begin{gather*}
		q (\xi) \in \spt f (p(\xi)), \quad q(z) \in \spt f
		(p(z)), \\
		| q (z) - q (\xi) | \leq | z-\xi | < r, \quad | q (z)
		- q (\xi) | = \dist (q(z),\spt f(p(\xi))), \\
		| q (z) - q (\xi) | \leq \mathcal{G} ( f (p(z)), f
		(p(\xi)) ) \leq ( \Lip f ) | p (z) - p (\xi) |, \\
		| z - \xi | \leq s^{-1} | p (z) - p (\xi) |, \quad z
		\notin X (\xi,r,\ker p,s).
	\end{gather*}
	Therefore $E$ is the union of the sets
	\begin{gather*}
		E_i := \{ \xi \in E \with E \cap X (\xi,1/i,\ker p,s)
		= \emptyset \}
	\end{gather*}
	corresponding to $i \in \nat$.

	Since $E_i \subset E$, it follows from the proof of
	\cite[3.3.5]{MR41:1976} that each subset of $E_i$ with
	diameter less that $1/i$ is a Lipschitzian function with
	Lipschitz constant at most $(s^{-2}-1)^{1/2} = \Lip f$. Using
	this fact and noting that $\graph_Q f$ is closed, one
	constructs a sequence of closed sets $g_i$ with $\Lip g_i \leq
	\Lip f$, here $\Lip \emptyset = 0$, and $\bigcup \{ g_i \with
	i \in \nat \} = \graph_Q f$ and defines
	\begin{gather*}
		h_{i,\nu} = \left ( g_i \without
		{\textstyle\bigcup_{j<i} g_j} \right ) \cap \{ (x,y)
		\with \density^0 ( \| f (x) \|, y) = \nu \} \quad
		\text{for $i \in \nat$, $\nu = 1, \ldots, Q$}.
	\end{gather*}
	Since $\density^0 ( \| S \|, y )$ depends upper
	semi continuously on $(y,S) \in \rel^\codim \times \qspace_Q (
	\rel^\vdim )$, the functions $h_{i,\nu}$ and hence $\dmn
	h_{i,\nu}$ are Borel sets by \cite[2.2.10\,(2)]{MR41:1976}.
	Arranging $I$ and $f_i$ such that each $h_{i,\nu}$ occurs
	exactly $\nu$ times among the $f_i$ the conclusion follows.
\end{proof}
\begin{proof} [Proof of
\eqref{item:rectifiability_Q_valued_graph:differentiability}]
	For $x \in A$ note $\card I(x) = Q$ and
	\begin{gather*}
		f(y) = \sum_{i \in I(x)} \mathbb{\Lbrack} f_i (y)
		\mathbf{\Rbrack} \quad \text{whenever $y \in
		\bigcap_{i \in I(x)} \dmn f_i$}.
	\end{gather*}
	By \cite[2.9.11, 3.1.2, 3.1.7]{MR41:1976} $\mathcal{L}^\vdim $
	almost all $x \in A$ satisfy
	\begin{gather*}
		i \in I(x) \quad \text{implies} \quad \text{$f_i$ is
		approximately differentiable at $x$}, \\
		i,j \in I(x),\ f_i(x) = f_j(x) \quad \text{implies}
		\quad \ap Df_i(x) = \ap D f_j (x).
	\end{gather*}
	At such a point $x$ there holds $\density^\vdim \big (
	\mathcal{L}^\vdim  \restrict \rel^\vdim  \without \bigcap_{i
	\in I(x)} \dmn f_i, x \big ) = 0$, and $f$ is therefore
	approximately strongly affinely approximable with
	\begin{gather*}
		\ap A f (x) (v) = \sum_{i\in I(x)} \mathbb{\Lbrack}
		f_i (x) + \ap Df_i (x) (v) \mathbb{\Rbrack} \quad
		\text{for $v \in \rel^\vdim $}.
	\end{gather*}
\end{proof}
\begin{remark}
	Instead of referring to \cite[2.2.10\,(2)]{MR41:1976} in the proof of
	\eqref{item:rectifiability_Q_valued_graph:existence}, one could have
	used the more elementary fact that $p ( B \without C )$ is a Borel set
	whenever $B$ and $C$ are closed subsets of $\rel^\vdim \times
	\rel^\codim$.
\end{remark}
\begin{remark}
	In \cite[Section 5]{MR2248817} Goblet gives an example with $\vdim
	=2$, $\codim=2$ and $A$ the unit sphere in $\rel^2$ such that no
	continuous function $g : A \to \rel^{\codim }$ satisfies $g(x) \in
	\spt f (x)$ whenever $x \in A$. Hence, in general the domain of the
	functions $f_i$ will not equal $A$.
\end{remark}
\begin{corollary} \label{thm:Q_valued_stepanoff}
	Suppose $m,n,Q \in \nat$, $A \subset B \subset \rel^\vdim $, $B$ is
	open, $f : B \to \qspace_Q ( \rel^{\codim } )$, and
	\begin{gather*}
		\limsup_{x \to a} \mathcal{G}(f(x),f(a))/|x-a| < \infty \quad
		\text{whenever $a \in A$}.
	\end{gather*}

	Then $f$ is strongly affinely approximable at $\mathcal{L}^\vdim $
	almost all points of $A$.
\end{corollary}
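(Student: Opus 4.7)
The strategy follows the classical proof of Stepanoff's theorem (cf.\ \cite[3.1.9]{MR41:1976}): decompose $A$ into countably many pieces on each of which $f$ is Lipschitzian, apply Theorem \ref{thm:rectifiability_Q_valued_graph}\,\eqref{item:rectifiability_Q_valued_graph:differentiability} on each piece to obtain approximate strong affine approximability, and then upgrade to strong affine approximability by a density-$1$ selection that exploits the pointwise Lipschitz bound supplied by the hypothesis. Concretely, for each $j \in \nat$ and each $z \in \rel^\vdim$ with rational coordinates set
\begin{gather*}
	A_{j,z} = A \cap \cball{\vdim}{z}{1/(4j)} \cap \bigl\{ a : \mathcal{G}(f(x), f(a)) \leq j |x-a| \text{ whenever $x \in B$ with $|x-a| \leq 1/j$} \bigr\}.
\end{gather*}
The hypothesis yields $A = \bigcup_{j,z} A_{j,z}$; since any two points of $A_{j,z}$ lie at distance at most $1/(2j) \leq 1/j$, the restriction $f|_{A_{j,z}}$ is $j$-Lipschitzian. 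By inner regularity of $\mathcal{L}^\vdim$ one may replace $A_{j,z}$ by an $\mathcal{L}^\vdim$-measurable subset of the same outer measure without losing the Lipschitz bound.

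Theorem \ref{thm:rectifiability_Q_valued_graph}\,\eqref{item:rectifiability_Q_valued_graph:differentiability} applied to $f|_{A_{j,z}}$ then produces, at $\mathcal{L}^\vdim$-almost every $a \in A_{j,z}$, an affine $\qspace_Q(\rel^\codim)$-valued map $\mathcal{A}_a$ with $\mathcal{A}_a(a) = f(a)$ together with a set $E \subset A_{j,z}$ of density $1$ at $a$ such that $\mathcal{G}(f(y), \mathcal{A}_a(y))/|y-a| \to 0$ as $y \to a$ in $E$. Fix such an $a$ and let $x_i \to a$ in $B$ with $x_i \neq a$. The density-$1$ property of $E$ at $a$ permits selecting $y_i \in E$ with $|y_i - x_i|/|x_i - a| \to 0$. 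For $i$ large enough that $|x_i - y_i| \leq 1/j$, the defining inequality for $A_{j,z}$ applied at $y_i \in A_{j,z}$ with $x = x_i \in B$ gives
\begin{gather*}
	\mathcal{G}(f(x_i), f(y_i)) \leq j |x_i - y_i| = o(|x_i - a|).
\end{gather*}
Combining this with $\mathcal{G}(f(y_i), \mathcal{A}_a(y_i)) = o(|y_i - a|) = o(|x_i - a|)$ and the global Lipschitz continuity of the affine map $\mathcal{A}_a$, the triangle inequality yields $\mathcal{G}(f(x_i), \mathcal{A}_a(x_i)) = o(|x_i - a|)$, proving strong affine approximability of $f$ at $a$.

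The main technical point is the measurability bookkeeping in the decomposition step, since $f$ carries no global regularity assumption beyond the pointwise continuity on $A$ forced by $L_a < \infty$; this is handled by inner regularity. The analytic heart is the two-sided Lipschitz control: the hypothesis supplies a neighborhood Lipschitz bound around every $y_i \in A_{j,z}$ (not only around the base point $a$), which is exactly what lets one transfer the approximate affine approximation from the density-$1$ set $E$ to the arbitrary sequence $x_i$ in $B$.
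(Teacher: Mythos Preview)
Your approach matches the paper's: decompose into countably many Lipschitz pieces, invoke Theorem \ref{thm:rectifiability_Q_valued_graph}\,\eqref{item:rectifiability_Q_valued_graph:differentiability} on each, then upgrade approximate to genuine strong affine approximability via the pointwise Lipschitz bound (the paper packages this last step as an application of \cite[3.1.5]{MR41:1976} to $z \mapsto \mathcal{G}(f(z),\ap Af(x)(z))$, which is precisely your density-$1$ selection argument written out).

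There is, however, one genuine gap. Your measurability step---``by inner regularity of $\mathcal{L}^\vdim$ one may replace $A_{j,z}$ by an $\mathcal{L}^\vdim$-measurable subset of the same outer measure''---is false in general: an arbitrary (possibly non-measurable) set need not contain a measurable subset of equal outer measure (think of a Vitali set). Since $A$ carries no measurability hypothesis, your $A_{j,z} = A \cap (\text{closed set})$ can be genuinely non-measurable, and then Theorem \ref{thm:rectifiability_Q_valued_graph} does not apply, nor does your argument cover $\mathcal{L}^\vdim$ almost all of $A_{j,z}$.

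The fix is exactly what the paper does: drop the intersection with $A$ and work with
\[
	C_j = B \cap \bigl\{ z : \mathcal{G}(f(x),f(z)) \leq j|x-z| \text{ for all } x \in \oball{}{z}{1/j} \bigr\},
\]
which one verifies (as in \cite[3.1.9]{MR41:1976}) is \emph{closed} in $B$, hence Borel. Then $A \subset \bigcup_j C_j$, each $C_j$ decomposes into closed pieces of small diameter on which $f$ is Lipschitz, Theorem \ref{thm:rectifiability_Q_valued_graph} applies directly, and your density-$1$ upgrade goes through unchanged because every $y_i$ chosen in the density-$1$ set lies in $C_j$ and hence carries the needed neighbourhood Lipschitz bound.
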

\begin{proof}
	The set $A$ is contained in the union of
	\begin{gather*}
		C_j = B \cap \{ z \with \text{$\mathcal{G} (f(x),f(z)) \leq j
		|x-z|$ for $x \in \oball{}{z}{1/j}$} \}
	\end{gather*}
	corresponding to $j \in \nat$. Verifying as in \cite[3.1.9]{MR41:1976}
	that $C_j$ is closed, one expresses $C_j$ as the union of closed sets
	$C_{j,1}, C_{j,2}, C_{j,3}, \ldots$ with diameters less than $1/j$ and
	notes that $f|C_{j,k}$ is Lipschitzian.  From
	Theorem \ref{thm:rectifiability_Q_valued_graph} and \cite[2.9.11]{MR41:1976}
	one infers that at $\mathcal{L}^\vdim$ almost all points $x$ of
	$C_{j,k}$ the function $f| C_{j,k}$ is approximately strongly affinely
	approximable and $\rel^\vdim \without C_{j,k}$ has density $0$ at $x$,
	hence $f$ is approximately strongly affinely approximable and
	$\rel^\vdim \without C_j$ has density $0$ at $x$, hence $f$ is
	strongly affinely approximable at $x$ by \cite[3.1.5]{MR41:1976}
	applied with $f(z)$ replaced by $\mathcal{G} ( f(z), \ap Af(x)(z) )$.
\end{proof}
\begin{remark}
	The preceding proof follows closely \cite[3.1.9]{MR41:1976}.
\end{remark}
\begin{definition} \label{def:q_height_tilt}
	Suppose $m,n,Q \in \nat$, $S \in \qspace_Q (
	\rel^{\codim })$, $1 \leq q \leq \infty$, $A$ is $\mathcal{L}^\vdim $
	measurable, and $f : A \to \qspace_Q ( \rel^{\codim } )$ is an
	$\mathcal{L}^\vdim  \restrict A$ measurable function.
	
	Then the \emph{$q$ height of $f$ with respect to $S$} is defined to be
	the $\Lp{q}( \mathcal{L}^\vdim  \restrict A)$ (semi) norm of the
	function mapping $x \in A$ to $\mathcal{G} (f(x),S)$, denoted by
	$\heighti_q(f,S)$, and, if $f$ is
	additionally Lipschitzian, then the \emph{$q$ tilt of $f$} is defined
	to be the $\Lp{q}(\mathcal{L}^\vdim  \restrict A)$ (semi) norm of the
	function mapping $x \in A$ to $| \ap A f (x) |$, denoted by $\tilti_q
	(f)$. Moreover, the \emph{$q$ height of $f$} is
	defined to be the infimum of the numbers $\heighti_q ( f, S )$
	corresponding to all $S \in \qspace_Q ( \rel^{\codim } )$ and denoted
	by $\heighti_q ( f)$.
\end{definition}
\begin{theorem} \label{thm:poincare_q_valued_ball}
	Suppose $m, n, Q \in \nat$, $f : \cball{\vdim }{0}{1}
	\to \qspace_Q ( \rel^{\codim } )$, and $\Lip f < \infty$.

	Then the following two statements hold:
	\begin{enumerate}
		\item \label{item:poincare_q_valued_ball:lp} If $1 \leq q <
		\vdim $, $q^\ast = q\vdim /(\vdim -q)$, then there
		exists a positive, finite number
		$\Gamma_{\eqref{item:poincare_q_valued_ball:lp}}$ depending
		only on $m$, $n$, $Q$, and $q$ such that
		\begin{gather*}
			\heighti_{q^\ast} ( f ) \leq
			\Gamma_{\eqref{item:poincare_q_valued_ball:lp}} \,
			\tilti_q (f).
		\end{gather*}
		\item \label{item:poincare_q_valued_ball:k} If $q < \vdim
		\leq \infty$, then there exists a positive, finite number
		$\Gamma_{\eqref{item:poincare_q_valued_ball:k}}$ depending
		only on $m$, $n$, $Q$, and $q$ such that
		\begin{gather*}
			\heighti_\infty ( f )  \leq
			\Gamma_{\eqref{item:poincare_q_valued_ball:k}} \,
			\tilti_q ( f ).
		\end{gather*}
	\end{enumerate}
\end{theorem}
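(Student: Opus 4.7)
The plan is to reduce the $Q$ valued Sobolev Poincar\'e inequality to the classical single valued case by means of Almgren's embedding. Fix a bi-Lipschitzian map $\xi : \qspace_Q ( \rel^\codim ) \to \rel^{PQ}$, with Lipschitz bounds on both $\xi$ and $\xi^{-1}$ depending only on $\codim$ and $Q$, whose image admits a Lipschitz retraction $\rho : \rel^{PQ} \to \xi ( \qspace_Q ( \rel^\codim ) )$; these objects are provided by Almgren \cite[1.2]{MR1777737}. Define $g = \xi \circ f : \cball{\vdim}{0}{1} \to \rel^{PQ}$. Then $\Lip g \leq \Lip \xi \cdot \Lip f$, so $g$ is in particular Lipschitzian, hence almost everywhere differentiable. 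Combining the representation of $\ap A f$ from Theorem \ref{thm:rectifiability_Q_valued_graph}\,\eqref{item:rectifiability_Q_valued_graph:differentiability} with the chain rule and the Lipschitz estimate on $\xi$ yields the pointwise bound
\begin{gather*}
	|Dg(x)| \leq \Gamma_1 ( \codim, Q ) \, | \ap A f (x) | \quad \text{at $\mathcal{L}^\vdim$ almost all $x \in \cball{\vdim}{0}{1}$}.
\end{gather*}

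Next invoke the classical inequality for $\rel^{PQ}$ valued Lipschitzian functions on the closed unit ball. In the situation of \eqref{item:poincare_q_valued_ball:lp} the Sobolev Poincar\'e inequality supplies a constant $c \in \rel^{PQ}$ (for example the integral mean of $g$) with
\begin{gather*}
	\| g - c \|_{\Lp{q^\ast} ( \cball{\vdim}{0}{1} )} \leq \Gamma_2 ( \vdim, q ) \, \| D g \|_{\Lp{q} ( \cball{\vdim}{0}{1} )},
\end{gather*}
while under the Morrey range $\vdim < q \leq \infty$ relevant for \eqref{item:poincare_q_valued_ball:k} one obtains in the same way a constant $c \in \rel^{PQ}$ with $\| g - c \|_{\Lp{\infty} ( \cball{\vdim}{0}{1} )}$ bounded by $\Gamma_3 ( \vdim, q ) \, \| D g \|_{\Lp{q} ( \cball{\vdim}{0}{1} )}$.

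Now set $S = \xi^{-1} ( \rho ( c ) ) \in \qspace_Q ( \rel^\codim )$. Since $g ( x ) = \xi ( f ( x ) ) \in \xi ( \qspace_Q ( \rel^\codim ) )$, the retraction property gives $\rho ( g ( x ) ) = g ( x )$, and therefore
\begin{gather*}
	\mathcal{G} ( f ( x ), S ) \leq \Lip ( \xi^{-1} ) \, | g ( x ) - \rho ( c ) | = \Lip ( \xi^{-1} ) \, | \rho ( g ( x ) ) - \rho ( c ) | \leq \Lip ( \xi^{-1} ) \Lip ( \rho ) \, | g ( x ) - c |.
\end{gather*}
Taking $\Lp{q^\ast}$ or $\Lp{\infty}$ norms on $\cball{\vdim}{0}{1}$ and chaining these inequalities with the scalar estimate on $\| g - c \|$ and the pointwise estimate $| D g | \leq \Gamma_1 | \ap A f |$ yields $\heighti_{q^\ast} ( f, S ) \leq \Gamma \, \tilti_q ( f )$ respectively $\heighti_\infty ( f, S ) \leq \Gamma \, \tilti_q ( f )$, which by the definition of $\heighti_{q^\ast} ( f )$ and $\heighti_\infty ( f )$ as infima over $S$ gives the asserted conclusions with constants depending only on $\codim$, $\vdim$, $Q$, and $q$.

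The only non-routine point is the reduction itself: once Almgren's embedding and the accompanying retraction are available, and once one has verified the pointwise bound $| D g | \leq \Gamma_1 | \ap A f |$ via Theorem \ref{thm:rectifiability_Q_valued_graph}\,\eqref{item:rectifiability_Q_valued_graph:differentiability} and the chain rule, the argument becomes a direct application of the standard scalar Sobolev and Morrey inequalities. No curvature or varifold information enters at this stage; the difficulty of the present paper lies rather in producing, in later sections, a Lipschitzian $Q$ valued approximation of $\mu$ to which the present theorem can be applied.
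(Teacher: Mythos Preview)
Your proof is correct and follows essentially the same route as the paper: compose with Almgren's bi-Lipschitz embedding $\boldsymbol{\xi}$, apply the classical Sobolev--Poincar\'e or Morrey inequality to $\boldsymbol{\xi}\circ f$, and use the Lipschitz retraction $\boldsymbol{\rho}$ to pull the optimal constant back into $\qspace_Q(\rel^\codim)$. The only cosmetic difference is that the paper obtains the pointwise bound $|D(\boldsymbol{\xi}\circ f)(x)|\leq \Lip\boldsymbol{\xi}\,|Af(x)|$ by a direct reference to Almgren \cite[1.4\,(3)]{MR1777737} rather than via Theorem~\ref{thm:rectifiability_Q_valued_graph} and a chain rule; your justification is slightly roundabout (since $\boldsymbol{\xi}$ acts on whole $Q$-tuples the phrase ``chain rule'' is not quite apt), but the inequality itself follows immediately from the Lipschitz estimate on $\boldsymbol{\xi}$ and the definition of strong affine approximability, so no gap arises.
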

\begin{proof}
	Defining $P$, $\boldsymbol{\xi} : \qspace_Q ( \rel^\codim ) \to
	\rel^{PQ}$ and $\boldsymbol{\rho} : \rel^{PQ} \to \im
	\boldsymbol{\xi}$ as in Almgren \cite[1.2\,(3), 1.3.\,(1)]{MR1777737}
	and noting using Almgren \cite[1.2\,(3), 1.3\,(1),
	1.4\,(3)]{MR1777737}
	\begin{gather*}
		\boldsymbol{\xi}^{-1} \circ \boldsymbol{\rho} \circ
		\boldsymbol{\xi} = \id{\qspace_Q ( \rel^\codim )}, \quad \Lip
		\boldsymbol{\xi} < \infty, \quad \Lip \boldsymbol{\xi}^{-1} <
		\infty, \quad \Lip \boldsymbol{\rho} < \infty, \\
		\mathcal{G} ( f (x), \boldsymbol{\xi}^{-1} ( \boldsymbol{\rho}
		(z) ) ) \leq \Lip \boldsymbol{\xi}^{-1} \Lip \boldsymbol{\rho}
		| \boldsymbol{\xi} ( f (x) ) - z | \quad \text{for $x \in
		\cball{\vdim}{0}{1}$, $z \in \rel^{PQ}$}, \\
		| D ( \boldsymbol{\xi} \circ f ) (x) | \leq \Lip
		\boldsymbol{\xi} | Af (x) | \quad \text{for $x \in \dmn D (
		\boldsymbol{\xi} \circ f )$}
	\end{gather*}
	the assertion is readily deduced from classical embedding results
	(which can be deduced for example from \cite[Lemma 7.14]{MR1814364}
	using estimates on convolution (cf. O'Neil \cite{MR0146673}) for part
	\eqref{item:poincare_q_valued_ball:lp} and H\"older's inequality for
	part \eqref{item:poincare_q_valued_ball:k}) applied to
	$\boldsymbol{\xi} \circ f$.
\end{proof}

\section{Approximation of integral varifolds} \label{sect:approx}
In this section an approximation procedure for integral $\vdim $ varifolds
$\mu$ in $\rel^\adim $ by $Q$ valued functions is carried out. Similar
constructions are used in \cite[3.1--3.12]{MR1777737} by Almgren and in
\cite[5.4]{MR485012} by Brakke. Basically, a part of $\mu$ which is suitably
close to a $Q$ valued plane is approximated ``above'' a subset $Y$ of
$\rel^\vdim $ by a Lipschitzian $Q$ valued function. The sets where this
approximation fails are estimated in terms of both $\mu$ and
$\mathcal{L}^\vdim $ measure.

Taking Brakke's version as a starting point, in order to obtain an
approximation useful for proving Theorems \ref{thm:sobolev_poincare}
and
\ref{thm:limit_poincare} in the next section, the following three problems had
to be solved.

Firstly, in the above mentioned estimate one can only allow for tilt and mean
curvature terms and not for a height term as it is present in Brakke
\cite[5.4]{MR485012}. This is done using a new version of Brakke's multilayer
monotonicity in \cite[5.3]{MR485012} which allows for variable offsets, see
Lemma \ref{lemma:multilayer_monotonicity_offset}.

Secondly, the seemingly most natural way to estimate the height of $\mu$ above
the complement of $Y$, namely measure times maximal height $h$, would not
produce sharp enough an estimate. In order to circumvent this difficulty, a
``preliminary graphical part'' $H$ of $\mu$ is used which is larger than the
part where $\mu$ equals the ``graph'' of the $Q$ valued function and also
slightly larger than the ``graphical part'' $G$ defined in terms of mean
curvature used in the statement of Theorem  \ref{thm:sobolev_poincare}. Points
in $H$ still satisfy a one sided Lipschitz condition with respect to points
above $Y$, see Lemma
\ref{lemma:inverse_multilayer_monotonicity}\,\eqref{item:inverse_multilayer_monotonicity:lip_related}
and Lemma
\ref{lemma:lipschitz_approximation_2}\,\eqref{item:lipschitz_approximation_2:lip_related}.
Using this fact in conjunction with a covering argument in Lemma
\ref{lemma:lipschitz_approximation_2}\,\eqref{item:lipschitz_approximation_2:height_estimate}
the actual error in estimating the $q$ height in a ball $\cball{}{\zeta}{t}$
where $\mathcal{L}^\vdim ( \cball{}{\zeta}{t} \cap Y )$ and $\mathcal{L}^\vdim
( \cball{}{\zeta}{t} \without Y )$ are comparable, can be estimated by
$\mathcal{L}^\vdim  ( \cball{}{\zeta}{t} \without Y )^{1/q} \cdot t$ instead
of $\mathcal{L}^\vdim  ( \cball{}{\zeta}{t} \without Y )^{1/q} \cdot h$; the
replacement of $h$ by $t$ being the decisive improvement which allows to
estimate the $q^\ast$ height ($q^\ast=\vdim q/(\vdim -q)$, $1 \leq q < \vdim
$) instead of the $q$ height in Theorem \ref{thm:sobolev_poincare}.

Thirdly, to obtain a sharp result with respect to the assumptions on the mean
curvature, all curvature conditions are phrased in terms of isoperimetric
ratios in order to allow for the application of the estimates in
\cite{snulmenn.isoperimetric}. In this situation it seems to be impossible to
derive monotonicity results from the monotonicity formula (cf.
\cite[(17.3)]{MR87a:49001}). Instead, it is shown that nonintegral bounds for
density ratios are preserved provided the varifold is additionally close to a
plane, see Lemma \ref{lemma:quasi_monotonicity}. The latter result appears to
be generally useful in deriving sharp estimates involving mean curvature.

Comparing the present construction to Almgren's, one notes that his version
does not contain a height term and establishes the important one sided
Lipschitz condition in \cite[3.8\,(4)]{MR1777737}. However, both properties
are proven only under a $\Lp{\infty}$ smallness condition on the mean
curvature. Almgren uses an elaborate inductive construction obtaining explicit
estimates by use of the monotonicity identity in Allard
\cite[5.1\,(1)]{MR0307015}. These estimates provide quantitative control of
the effect of prescribing a small Lipschitz constant for the approximating
function on the accuracy of the approximation in mass; a feature which is
apparently important for the applications in the course of that paper. Since
such kind of control is not needed here and since explicit estimates cannot be
easily derived from the present rather weak conditions on the mean curvature,
contradiction arguments in Lemma \ref{lemma:multilayer_monotonicity_offset}
and Lemma \ref{lemma:inverse_multilayer_monotonicity} together with the
identification of the ``preliminary graphical part'' are used to establish the
afore-mentioned two properties of Almgren's construction in the present
setting. In fact, even in the case of multiplicity $1$, deriving explicit
estimates is connected to determining the best value in the isoperimetric
inequality, see \cite[2.4--6]{snulmenn.isoperimetric}.
\begin{miniremark} \label{miniremark:planes}
	If $m, n \in \nat$, $a \in \rel^\adim $, $0 < r < \infty$, $T \in
	\grass{\adim }{\vdim }$, and $\mu$ is a stationary, integral $\vdim $
	varifold in $\oball{}{a}{r}$ with $T_x \mu = T$ for $\mu$ almost all
	$x \in \oball{}{a}{r}$, then $\perpproject{T} ( \spt \mu )$ is
	discrete and closed in $\perpproject{T} ( \oball{}{a}{r} )$ and for
	every $x \in \spt \mu$
	\begin{gather*}
		y \in \oball{}{a}{r}, \ y-x \in T \quad \text{implies} \quad
		\density^\vdim  ( \mu, y ) = \density^\vdim  ( \mu, x ) \in
		\nat;
	\end{gather*}
	hence with $S_x = \{ y \in \oball{}{a}{r} \with y - x \in T \}$
	\begin{gather*}
		\mu \restrict S_x = \density^\vdim  ( \mu, x )
		\mathcal{H}^\vdim  \restrict S_x \quad \text{whenever $x \in
		\oball{}{a}{r}$}.
	\end{gather*}
	A similar assertion may be found in Almgren \cite[3.6]{MR1777737} and
	is used by Brakke in \cite[5.3\,(16)]{MR485012}.
\end{miniremark}
\begin{lemma} \label{lemma:monotonicity_offset}
	Suppose $m, n \in \nat$, $0 < \delta < 1$, $0 \leq s < 1$, and $0 \leq
	M < \infty$.
	
	Then there exists a positive, finite number $\varepsilon$ with the
	following property.
	
	If $a \in \rel^\adim $, $0 < r < \infty$, $T \in \grass{\adim }{\vdim
	}$, $0 \leq d < \infty$, $0 < t < \infty$, $\zeta \in \rel^\adim $,
	\begin{gather*}
		\max \{ d, r \} \leq M t, \quad \zeta \in \cball{\adim }{0}{d}
		\cap T, \quad d + t \leq r,
	\end{gather*}
	$\mu$ is an integral $\vdim $ varifold in $\oball{}{a}{r}$ with
	locally bounded first variation,
	\begin{gather*}
		\measureball{\| \delta \mu \|}{\oball{}{a}{r}} \leq
		\varepsilon \, \mu ( \oball{}{a}{r} )^{1-1/\vdim }, \quad
		\measureball{\mu}{\oball{}{a}{r}} \leq M \unitmeasure{\vdim }
		r^\vdim , \\
		{\textstyle\int_{\oball{}{a}{r}}} | \eqproject{T_\xi\mu} -
		\project{T} | \ud \mu (\xi) \leq \varepsilon \,
		\measureball{\mu}{\oball{}{a}{r}}, \\
		\measureball{\mu}{\cball{}{a}{\varrho}} \geq \delta
		\unitmeasure{\vdim} \varrho^\vdim \quad \text{for $0 < \varrho
		< r$},
	\end{gather*}
	then
	\begin{gather*}
		\mu ( \{ x \in \oball{}{a+\zeta}{t} \with | \project{T} ( x-a
		) | > s | x-a | \} ) \geq (1-\delta) \unitmeasure{\vdim }
		t^\vdim .
	\end{gather*}
\end{lemma}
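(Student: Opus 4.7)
I would prove this by a contradiction and compactness argument, using Allard's integral varifold compactness together with the structural description of stationary integral varifolds with constant tangent plane recalled in \ref{miniremark:planes}.

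Suppose the conclusion fails. Then there exist $\varepsilon_i \pluslim{0}$ together with data $a_i,r_i,T_i,\zeta_i,d_i,t_i,\mu_i$ satisfying every hypothesis with $\varepsilon=\varepsilon_i$, yet
\begin{gather*}
	\mu_i ( \{ x \in \oball{}{a_i+\zeta_i}{t_i} \with | \project{T_i} ( x-a_i ) | > s | x-a_i | \} ) < (1-\delta) \unitmeasure{\vdim} t_i^\vdim .
\end{gather*}
Since $\max\{d_i,r_i\}\leq M t_i$ and $d_i+t_i\leq r_i$ force $r_i/M \leq t_i \leq r_i$ and $d_i\leq r_i$, I would apply $\mutau{r_i}{a_i}$ and, after passing to a subsequence (not relabelled), assume $a_i=0$, $r_i=1$, with $t_i\to t_\infty\in[1/M,1]$, $d_i\to d_\infty$, $\zeta_i\to\zeta_\infty\in\cball{\adim}{0}{d_\infty}\cap T_\infty$, and $T_i\to T_\infty$ in $\grass{\adim}{\vdim}$.

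The uniform bounds $\mu_i(\oball{}{0}{1})\leq M\unitmeasure{\vdim}$ and $\|\delta\mu_i\|(\oball{}{0}{1})\leq\varepsilon_i(M\unitmeasure{\vdim})^{1-1/\vdim}$ allow Allard's integral varifold compactness theorem to produce a further subsequence with varifold limit $\mu_\infty$ in $\oball{}{0}{1}$ that is integral. Since $\|\delta\mu_i\|(\oball{}{0}{1})\to 0$, $\mu_\infty$ is stationary. The tilt bound $\int|\eqproject{T_\xi\mu_i}-\project{T_i}|\ud\mu_i(\xi)\leq\varepsilon_i\mu_i(\oball{}{0}{1})$ and the \emph{varifold} (not merely Radon measure) convergence force $\int|\eqproject{T_\xi\mu_\infty}-\project{T_\infty}|\ud\mu_\infty(\xi)=0$, so $T_\xi\mu_\infty=T_\infty$ for $\mu_\infty$-almost every $\xi$. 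Therefore \ref{miniremark:planes} applies and $\mu_\infty$ is a locally finite sum of integer-multiplicity $\vdim$-planes parallel to $T_\infty$ in $\oball{}{0}{1}$. Passing the lower density hypothesis to the limit on compact sets yields $\mu_\infty(\cball{}{0}{\varrho})\geq\delta\unitmeasure{\vdim}\varrho^\vdim$ for $0<\varrho<1$; combined with the planar structure and integer densities this forces the plane $T_\infty$ itself to appear in $\mu_\infty$ with multiplicity at least $1$.

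Finally, set
\begin{gather*}
	U_\infty := \{x\in\oball{}{\zeta_\infty}{t_\infty}\with|\project{T_\infty}(x)|>s|x|\},
\end{gather*}
an open subset of $\oball{}{0}{1}$. Since $\zeta_\infty\in T_\infty$ and $s<1$, every nonzero $x\in T_\infty\cap\oball{}{\zeta_\infty}{t_\infty}$ lies in $U_\infty$, giving $\mu_\infty(U_\infty)\geq\unitmeasure{\vdim}t_\infty^\vdim$. On the other hand, $x\mapsto|\project{T_i}(x)|-s|x|$ converges uniformly to $x\mapsto|\project{T_\infty}(x)|-s|x|$ on bounded sets while $\oball{}{\zeta_i}{t_i}\to\oball{}{\zeta_\infty}{t_\infty}$; so for any open $U'\Subset U_\infty$ one has $U'\subset U_i:=\{x\in\oball{}{\zeta_i}{t_i}\with|\project{T_i}(x)|>s|x|\}$ for all large $i$, hence
\begin{gather*}
	\mu_\infty(U')\leq\liminf_{i\to\infty}\mu_i(U')\leq\liminf_{i\to\infty}\mu_i(U_i)\leq(1-\delta)\unitmeasure{\vdim}t_\infty^\vdim,
\end{gather*}
and exhausting $U_\infty$ by such $U'$ contradicts $\mu_\infty(U_\infty)\geq\unitmeasure{\vdim}t_\infty^\vdim$ because $\delta>0$ and $t_\infty\geq 1/M>0$.

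The main obstacle is propagating every hypothesis cleanly through the varifold limit: integrality of $\mu_\infty$ rests on Allard's compactness, the alignment $T_\xi\mu_\infty=T_\infty$ $\mu_\infty$-a.e.\ rests on testing the tilt integrand (continuous and bounded on $G(\adim,\vdim)$) against varifold convergence, and transferring the lower density bound requires upper semicontinuity of $\mu\mapsto\mu(K)$ on compact $K$; once these are in place, the structural result \ref{miniremark:planes} makes the contradiction essentially forced.
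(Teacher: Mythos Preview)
Your argument is correct and follows essentially the same route as the paper's: contradiction, rescale to unit ball, extract a stationary integral limit via Allard's compactness with $T_\xi\mu_\infty=T_\infty$ almost everywhere from the vanishing tilt, invoke \ref{miniremark:planes}, and derive the contradiction by comparing the limit measure of the open cone-ball set with the persisting lower density bound at the origin. The only cosmetic difference is that the paper uses isometries to fix $T_i=T$ for all $i$ (so no convergence of projections needs to be tracked), and it phrases the final contradiction as ``$0\notin\spt\mu$'' rather than your equivalent ``the plane through $0$ occurs with multiplicity $\geq 1$''.
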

\begin{proof}
	If the lemma were false for some $\codim, \vdim \in \nat$, $0 < \delta
	< 1$, $0 \leq s < 1$, and $0 \leq M < \infty$ there would exist a
	sequence $\varepsilon_i$ with $\varepsilon_i \downarrow 0$ as $i \to
	\infty$ and sequences $a_i$, $r_i$, $T_i$, $d_i$, $t_i$, $\zeta_i$,
	and $\mu_i$ showing that $\varepsilon_i$ does not satisfy the
	conclusion of the lemma.

	One could assume for some $T \in \grass{\adim}{\vdim}$, using
	isometries and hometheties,
	\begin{gather*}
		T_i = T, \quad r_i = 1, \quad a_i = 0
	\end{gather*}
	for $i \in \nat$. Therefore passing to a subsequence, there would
	exist $0 \leq d < \infty$, $0 \leq t < \infty$, $\zeta_i \in
	\rel^\adim$ such that
	\begin{gather*}
		d_i \to d, \quad t_i \to t, \quad \zeta_i \to \zeta
	\end{gather*}
	as $i \to \infty$. There would hold
	\begin{gather*}
		\max \{ d, 1 \} \leq M t, \quad \zeta \in \cball{\adim}{0}{d}
		\cap T, \quad d + t \leq 1,
	\end{gather*}
	in particular $t > 0$. Possibly passing to another subsequence, one
	could construct (cf. Allard \cite[6.4]{MR0307015}) a stationary,
	integral $\vdim$ varifold $\mu$ in $\oball{\adim}{0}{1}$ with
	\begin{gather*}
		T_x \mu = T \quad \text{for $\mu$ almost all $x \in
		\oball{\adim}{0}{1}$}
	\end{gather*}
	such that
	\begin{gather*}
		{\textstyle\int} \phi \ud \mu_i \to {\textstyle\int} \phi \ud
		\mu \quad \text{for $i \to \infty$ for $\phi \in \ccspace{
		\oball{\adim}{0}{1} }$}.
	\end{gather*}
	Since any open subset of $\rel^\adim$ with compact closure in $\{ x
	\in \oball{}{\zeta}{t} \with | \project{T} (x) | > s | x | \}$ would
	be contained in $\{ x \in \oball{}{\zeta_i}{t_i} \with | \project{T}
	(x) | > s | x | \}$ for large $i$, one could estimate
	\begin{gather*}
		\mu ( \{ x \in \oball{}{\zeta}{t} \with | \project{T}
		(x) | > s | x | \} ) \\
		\leq \liminf_{i \to \infty}
		\mu_i ( \{ x \in \oball{}{\zeta_i}{t_i} \with |
		\project{T} (x) | > s | x | \} ) \leq ( 1 - \delta )
		\unitmeasure{\vdim} t^\vdim.
	\end{gather*}
	This would imply by \ref{miniremark:planes} that $0 \notin \spt \mu$
	in contradiction to
	\begin{gather*}
		\measureball{\mu}{\cball{\adim}{0}{\varrho}} \geq \limsup_{i
		\to \infty} \measureball{\mu_i}{\cball{\adim}{0}{\varrho}}
		\geq \delta \unitmeasure{\vdim} \varrho^\vdim \quad \text{for
		$0 < \varrho < 1$}.
	\end{gather*}
\end{proof}
\begin{definition} \label{def:isoperimetric}
	Whenever $\vdim \in \nat$ the symbol $\isoperimetric{\vdim}$ will
	denote the smallest number with the following property:

	If $\codim \in \nat_0$ and $\mu$ is a rectifiable $\vdim$ varifold in
	$\rel^\adim $ with $\mu ( \rel^\adim  ) < \infty$ and $\| \delta \mu
	\| ( \rel^\adim) < \infty$, then
        \begin{gather*}
		\mu \big ( \big \{ x \in \rel^\adim  \with \density^\vdim  (
		\mu, x ) \geq 1 \big \} \big ) \leq \isoperimetric{\vdim} \,
		\mu ( \rel^\adim)^{1/\vdim } \| \delta \mu \| ( \rel^\adim  ).
        \end{gather*}

	Properties of this number are given in \cite[Section
	2]{snulmenn.isoperimetric}, in particular $\isoperimetric{\vdim} <
	\infty$ by the isoperimetric inequality.
\end{definition}
\begin{lemma} [cf. \protect{\cite[2.6]{snulmenn.isoperimetric}}] \label{lemma:lower_density_bound}
	Suppose $\codim \in \nat_0$, $\vdim \in \nat$, and $\delta > 0$.

	Then there exists a positive number $\varepsilon$ with the following
	property.

	If $a \in \rel^\adim$, $0 < r < \infty$, $\mu$ is a rectifiable
	$\vdim$ varifold in $\oball{}{a}{r}$ of locally bounded first
	variation such that $\density^\vdim ( \mu, x ) \geq 1$ for $\mu$
	almost all $x \in U$, $a \in \spt \mu$, and
	\begin{gather*}
		\measureball{\| \delta \mu \|}{\cball{}{a}{\varrho}} \leq
		(2\isoperimetric{\vdim })^{-1} \mu (
		\cball{}{a}{\varrho})^{1-1/\vdim } \quad \text{for $0 <
		\varrho < r$}, \\
		\measureball{\| \delta \mu \|}{\oball{}{a}{r}} \leq
		\varepsilon \, \mu ( \oball{}{a}{r} )^{1-1/\vdim },
	\end{gather*}
	then
	\begin{gather*}
		\measureball{\mu}{\oball{}{a}{r}} \geq ( 1 - \delta )
		\unitmeasure{\vdim } r^\vdim .
	\end{gather*}
\end{lemma}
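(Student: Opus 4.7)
The plan is to argue by contradiction using a varifold compactness argument. Suppose the conclusion fails for some fixed $\codim$, $\vdim$, and $\delta$; then one obtains sequences $\varepsilon_i \pluslim{0}$ and $(a_i, r_i, \mu_i)$ fulfilling the hypotheses with $\varepsilon = \varepsilon_i$ yet violating the conclusion. After composing with $\mutau{r_i}{a_i}$ one may assume $a_i = 0$ and $r_i = 1$, so that $\measureball{\mu_i}{\oball{\adim}{0}{1}} < (1-\delta) \unitmeasure{\vdim}$ and $\measureball{\|\delta\mu_i\|}{\oball{\adim}{0}{1}} \leq \varepsilon_i \unitmeasure{\vdim}^{1-1/\vdim} \pluslim{0}$.

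The first essential step is to obtain a preliminary lower density bound $\measureball{\mu_i}{\cball{\adim}{0}{\varrho}} \geq c\,\varrho^\vdim$ for $0 < \varrho < 1$ with $c$ depending only on $\vdim$. Setting $f_i(\varrho) = \measureball{\mu_i}{\cball{\adim}{0}{\varrho}}$, I would apply the isoperimetric inequality from Definition \ref{def:isoperimetric} to the restricted varifold $\mu_i \restrict \cball{\adim}{0}{\varrho}$, which is rectifiable with density at least $1$ almost everywhere. A standard slicing argument (cutting off near the sphere $\partial \cball{\adim}{0}{\varrho}$) bounds its total first variation by $\measureball{\|\delta\mu_i\|}{\cball{\adim}{0}{\varrho}} + f_i'(\varrho)$ for $\mathcal{L}^1$ almost all $\varrho$. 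The first hypothesis absorbs half of the curvature term to yield $f_i(\varrho)^{1-1/\vdim} \leq 2\isoperimetric{\vdim}\, f_i'(\varrho)$, and integration from $0$ (valid since $0 \in \spt \mu_i$ and $f_i(0^+) = 0$) gives the claimed polynomial lower bound.

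With this uniform lower bound secured, Allard's compactness theorem for rectifiable varifolds (cf.\ Allard \cite[6.4]{MR0307015}) applies: the masses $\measureball{\mu_i}{\oball{\adim}{0}{1}}$ are uniformly bounded and $\measureball{\|\delta\mu_i\|}{\oball{\adim}{0}{1}} \pluslim{0}$, so a subsequence converges weakly to a stationary rectifiable $\vdim$ varifold $\mu$ in $\oball{\adim}{0}{1}$ with density at least $1$ almost everywhere. Upper semi-continuity on compact sets gives $\measureball{\mu}{\cball{\adim}{0}{\varrho/2}} \geq \limsup_i \measureball{\mu_i}{\cball{\adim}{0}{\varrho/2}} \geq c (\varrho/2)^\vdim > 0$, forcing $0 \in \spt \mu$.

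The monotonicity formula for stationary rectifiable varifolds then upgrades the almost-everywhere density lower bound to $\density^\vdim(\mu, 0) \geq 1$, whence $\measureball{\mu}{\cball{\adim}{0}{\varrho}} \geq \unitmeasure{\vdim} \varrho^\vdim$ for $0 < \varrho < 1$. Taking $\varrho \to 1^-$ gives $\measureball{\mu}{\oball{\adim}{0}{1}} \geq \unitmeasure{\vdim}$, which contradicts lower semi-continuity on open sets: $\measureball{\mu}{\oball{\adim}{0}{1}} \leq \liminf_i \measureball{\mu_i}{\oball{\adim}{0}{1}} \leq (1-\delta) \unitmeasure{\vdim}$. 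The main obstacle will be the preliminary isoperimetric step, as it is precisely the uniform-in-$i$ lower density bound that prevents the limit varifold from degenerating at the origin; without it, the monotonicity formula cannot be brought to bear to supply the sharp constant $\unitmeasure{\vdim}$ needed to close the contradiction.
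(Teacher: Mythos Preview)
The paper does not supply its own proof of this lemma; it is stated with the citation ``cf.\ \cite[2.6]{snulmenn.isoperimetric}'' and no proof environment follows. Your argument is a correct, self-contained proof and is in fact the natural one: the preliminary lower bound $\measureball{\mu_i}{\cball{\adim}{0}{\varrho}} \geq (2\vdim\isoperimetric{\vdim})^{-\vdim}\varrho^\vdim$ obtained from the isoperimetric differential inequality is precisely \cite[2.5]{snulmenn.isoperimetric}, which the present paper invokes elsewhere (e.g.\ in the proofs of Lemma~\ref{lemma:lipschitz_approximation_2}\,\eqref{item:lipschitz_approximation_2:lip_related} and Theorem~\ref{thm:sobolev_poincare}); the subsequent compactness and monotonicity step upgrades the constant to $\unitmeasure{\vdim}$.

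One small point of care: Allard \cite[6.4]{MR0307015} is formulated for \emph{integral} varifolds, whereas here the $\mu_i$ are merely rectifiable with $\density^\vdim \geq 1$ almost everywhere. The conclusion you need---that the limit is rectifiable with $\density^\vdim \geq 1$ almost everywhere, hence $\density^\vdim(\mu,0)\geq 1$ by upper semicontinuity---still follows from Allard's rectifiability theorem \cite[5.5\,(1)]{MR0307015} together with the standard varifold compactness, so the argument goes through; just adjust the citation.
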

\begin{miniremark} \label{miniremark:meanvalue}
	Suppose $- \infty < a < b < \infty$, $I =[a,b]$, $f : I \to \rel$ is
	nondecreasing and continuous from the left, $g : I \to \rel$ is
	continuous, and $f (a) \geq g (a)$, $f (b) < g(b)$.
	
	Then there exists $\xi$ with $a \leq \xi < b$ such that
	\begin{gather*}
		f (\xi) = g (\xi), \quad \text{and} \quad f (t) \geq g (t)
		\quad \text{whenever $\xi \geq t \in I$};
	\end{gather*}
	in fact one may take $\xi = \inf \{ t \in I \with f (t) < g (t)\}$.
\end{miniremark}
\begin{lemma} [Multilayer monotonicity] \label{lemma:multilayer_monotonicity}
	Suppose $m,n,Q \in \nat$, $0 < \delta \leq 1$, and $0 \leq s < 1$.
	
	Then there exists a positive, finite number $\varepsilon$ with the
	following property.
	
	If $X \subset \rel^\adim $, $T \in \grass{\adim }{\vdim }$, $0 < r <
	\infty$,
	\begin{gather*}
		| \project{T} ( y-x ) | \leq s | y-x | \quad \text{whenever
		$x, y \in X$},
	\end{gather*}
	$\mu$ is an integral $\vdim $ varifold in $\union{\oball{}{x}{r}}{x
	\in X}$ with locally bounded first variation,
	\begin{gather*}
		{\textstyle\sum_{x \in X}} \density_\ast^\vdim  ( \mu, x) \geq
		Q-1+\delta,
	\end{gather*}
	and whenever $0 < \varrho < r$, $x \in X \cap \spt \mu$
	\begin{gather*}
		\measureball{\| \delta \mu \|}{\cball{}{x}{\varrho}} \leq
		\varepsilon \, \mu ( \cball{}{x}{\varrho} )^{1-1/\vdim },
		\quad {\textstyle\int_{\cball{}{x}{\varrho}}} |
		\eqproject{T_\xi\mu} - \project{T} | \ud \mu (\xi) \leq
		\varepsilon \, \measureball{\mu}{\cball{}{x}{\varrho}},
	\end{gather*}
	then
	\begin{gather*}
		\mu \big ( \union{\oball{}{x}{\varrho}}{x \in X} \big ) \geq
		(Q-\delta) \unitmeasure{\vdim } \varrho^\vdim  \quad
		\text{whenever $0 < \varrho \leq r$}.
	\end{gather*}
\end{lemma}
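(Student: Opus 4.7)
I would argue by contradiction, paralleling the compactness strategy of Lemma \ref{lemma:monotonicity_offset}. Suppose the lemma fails for some $m, n, Q, \delta, s$, so that for each $i \in \nat$ there exist $X_i, T_i, r_i, \mu_i$ satisfying the hypotheses with $\varepsilon = 1/i$ and a radius $\varrho_i \in (0, r_i]$ for which
\begin{gather*}
\mu_i\bigl(\bunion{\oball{}{x}{\varrho_i}}{x \in X_i}\bigr) < (Q-\delta)\unitmeasure{\vdim}\varrho_i^\vdim.
\end{gather*}
After rescaling by $\varrho_i$ and applying a rotation I may assume $\varrho_i = 1$ and $T_i = T$ fixed. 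Since $\density^\vdim(\mu_i, \cdot)$ is a positive integer on $\spt \mu_i$, the sum condition is already realised on a subset $X_i' \subseteq X_i \cap \spt \mu_i$ of cardinality at most $Q$; passing from $X_i$ to $X_i'$ only shrinks the union of balls, so the failure persists, and translating I arrange $0 \in X_i'$.

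Next I pass to a limit. The failure hypothesis bounds $\mu_i(\oball{}{0}{1})$ above by $(Q-\delta)\unitmeasure{\vdim}$, and since the hypotheses of the lemma at $x = 0$ supply first-variation error of order $1/i$, the monotonicity formula propagates this to uniform mass bounds on compact subsets of the domain. After extracting a subsequence, $X_i'$ converges to a finite set $X^\ast \ni 0$, possibly losing some points to infinity, and Allard's compactness theorem yields a stationary integral $\vdim$ varifold $\mu$ on a neighborhood of $\bunion{\oball{}{x}{1}}{x \in X^\ast}$. Because $\int |T_\xi\mu_i - \project{T}|\ud\mu_i \to 0$ on any such neighborhood, $T_\xi\mu = T$ for $\mu$ almost all $\xi$.

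By \ref{miniremark:planes} the limit decomposes as $\sum_j k_j \mathcal{H}^\vdim \restrict (z_j + T)$ with $k_j \in \nat$. The geometric hypothesis $|\project{T}(y - x)| \leq s|y - x|$, $s < 1$, ensures $\perpproject{T}(y - x) \neq 0$ for distinct $x,y \in X^\ast$, so each $x \in X^\ast \cap \spt \mu$ lies on its own translate $x + T$ of $T$ carrying multiplicity $\density^\vdim(\mu,x) \geq 1$. Upper semicontinuity of density along a sequence of varifolds with vanishing first variation (combined with an accounting for escaping points: each ball around a point of $X_i'$ with norm tending to infinity carries mass at least $(1 - o(1))\unitmeasure{\vdim}$ by approximate monotonicity, and the failure hypothesis caps the number of such escapees) gives $\sum_{x \in X^\ast \cap \spt \mu} \density^\vdim(\mu, x) \geq Q - 1 + \delta$, whence by integrality the sum is $\geq Q$. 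Since $(x + T) \cap \oball{}{x}{1}$ is an $\vdim$-disk of $\mathcal{H}^\vdim$ measure $\unitmeasure{\vdim}$,
\begin{gather*}
\mu\bigl(\bunion{\oball{}{x}{1}}{x \in X^\ast}\bigr) \geq {\textstyle\sum_{x \in X^\ast \cap \spt\mu}} \density^\vdim(\mu, x) \unitmeasure{\vdim} \geq Q\unitmeasure{\vdim},
\end{gather*}
while mass lower semicontinuity on open sets gives $\mu(\bunion{\oball{}{x}{1}}{x \in X^\ast}) \leq \liminf_i \mu_i(\bunion{\oball{}{x}{1}}{x \in X_i'}) \leq (Q - \delta)\unitmeasure{\vdim}$, a contradiction.

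The main obstacle is the bookkeeping in the compactness step: one must anchor the monotonicity formula at the basepoint $0$ to convert the failure hypothesis into uniform mass bounds on compacta, and one must confirm that density escaping to infinity cannot salvage the failure inequality, which requires estimating the mass attributed to escaping balls via approximate monotonicity and exploiting the upper bound forced by the contradiction assumption. Once this is secured, the rigid planar structure supplied by \ref{miniremark:planes} drives the numerical count to the desired contradiction.
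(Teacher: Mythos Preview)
Your overall strategy---contradiction, rescale at the failure radius, extract a stationary planar limit via Allard compactness, and contradict integrality of the density sum---matches the paper.  But there is a genuine gap in the step where you assert that ``upper semicontinuity of density along a sequence of varifolds with vanishing first variation'' yields $\sum_{x\in X^\ast}\density^\vdim(\mu,x)\geq Q-1+\delta$.  The standard density upper semicontinuity (Allard/Simon) rests on the monotonicity formula, which needs a bound of the form $\|\delta\mu_i\|\leq\Lambda\mu_i$ (or $\meancurv{\mu_i}\in L^\infty$).  The hypothesis here is only the isoperimetric-type estimate $\|\delta\mu_i\|(\cball{}{x}{\varrho})\leq\varepsilon_i\,\mu_i(\cball{}{x}{\varrho})^{1-1/\vdim}$, from which no useful monotonicity inequality can be extracted---the paper makes exactly this point in the introduction to Section~\ref{sect:approx}.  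Consequently you cannot propagate the \emph{infinitesimal} information $\density_\ast^\vdim(\mu_i,x)\geq d$ to a mass bound at the fixed scale $1$ after rescaling, and the density of the limit at $x\in X^\ast$ is uncontrolled from below beyond what Lemma~\ref{lemma:lower_density_bound} gives (namely, close to~$1$, not close to the original density).

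The paper repairs this with the intermediate-value device \ref{miniremark:meanvalue}: rather than rescale at an arbitrary failure radius, one selects $\varrho_i$ so that the failure $\mu_i(\bigcup_x\oball{}{x}{\varrho_i})\leq(Q-\delta)\unitmeasure{\vdim}\varrho_i^\vdim$ holds \emph{and} the combined mass ratio stays above $Q-1+\delta/2$ for \emph{every} $0<\varrho\leq\varrho_i$.  This converts the infinitesimal density hypothesis into a uniform finite-scale lower bound that survives the limit and yields $\sum_{x\in X}\density^\vdim(\nu,x)\geq Q-1+\delta/2$; the contradiction $Q-1+\delta/2\leq\sum\leq Q-\delta$ with $\sum\in\nat$ then closes the argument.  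As a secondary point, the paper avoids your bookkeeping for points escaping to infinity by translating the connected components of $\bigcup_x\oball{}{x}{1}$ so that the (at most $Q$) points of $X_i$ all lie in a fixed bounded set---this costs nothing in mass and makes the Hausdorff-limit step clean.
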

\begin{proof}
	If the lemma were false for some $m,n,Q \in \nat$, $0 < \delta < 1/2$,
	and $0 < s < 1$, there would exist a sequence $\varepsilon_i$ with
	$\varepsilon_i \downarrow 0$ as $i \to \infty$ and sequences $X_i$,
	$T_i$, $r_i$, and $\mu_i$ showing that $\varepsilon_i$ does not
	satisfy the conclusion of the lemma.
	
	Clearly, one could assume for some $T \in \grass{\adim }{\vdim }$
	\begin{gather*}
		T_i = T \quad \text{for $i \in \nat$},
	\end{gather*}
	$X_i \subset \spt \mu_i$ for $i \in \nat$, and in view of
	Lemma \ref{lemma:lower_density_bound} also
	\begin{gather*}
		\card X_i \leq Q \quad \text{for $i \in \nat$}.
	\end{gather*}
	One would observe that \ref{miniremark:meanvalue} could be used to
	deduce the existence of a sequence $0 < \varrho_i < r_i$ such that
	\begin{gather*}
		\mu_i \big ( \union{\oball{}{x}{\varrho_i}}{x \in X_i} \big )
		\leq (Q-\delta) \unitmeasure{\vdim } ( \varrho_i )^\vdim , \\
		\mu_i \big ( \union{\oball{}{x}{\varrho}}{x \in X_i} \big )
		\geq ( Q-1+\delta/2 ) \unitmeasure{\vdim } \varrho^\vdim
		\quad \text{whenever $0 < \varrho \leq \varrho_i$}.
	\end{gather*}
	There would hold for $x \in X_i$, $i \in \nat$
	\begin{gather*}
		\measureball{\| \delta \mu_i \|}{\oball{}{x}{\varrho_i}} \leq
		\varepsilon_i ( Q \unitmeasure{\vdim } )^{1-1/\vdim }
		(\varrho_i)^{\vdim -1}, \\
		{\textstyle\int_{\oball{}{x}{\varrho_i}}} | \eqproject{T_\xi
		\mu_i} - \project{T} | \ud \mu_i (\xi) \leq \varepsilon_i Q
		\unitmeasure{\vdim } (\varrho_i)^\vdim .
	\end{gather*}
	
	Rescaling, one would infer the existence of sequences of integral
	$\vdim $ varifolds $\nu_i$ in $\rel^\adim $, $X_i \subset \spt \nu_i$,
	and $\varepsilon_i$ with $\varepsilon_i \downarrow 0$ as $i \to
	\infty$ such that for some $T \in \grass{\adim }{\vdim }$, $0 < M <
	\infty$, $Q \in \nat$, $0 < \delta < 1/2$, and $0 < s < 1$
	\begin{gather*}
		\card X_i \leq Q, \quad s^{-1} | \project{T} ( y-x ) | \leq |
		y-x | \quad \text{for $x,y \in X_i$}, \\
		\measureball{\| \delta \nu_i \|}{\oball{}{x}{1}} \leq
		\varepsilon_i M, \quad {\textstyle\int_{\oball{}{x}{1}}} |
		\eqproject{T_\xi \nu_i} - \project{T} | \ud \nu_i (\xi) \leq
		\varepsilon_i M \quad \text{for $x \in X_i$}, \\
		\nu_i \big ( \union{\oball{}{x}{1}}{x \in X_i} \big ) \leq (
		Q-\delta ) \unitmeasure{\vdim }, \\
		\nu_i \big ( \union{\oball{}{x}{\varrho}}{x \in X_i} \big )
		\geq ( Q-1+\delta/2 ) \unitmeasure{\vdim } \varrho^\vdim
		\quad \text{whenever $0 < \varrho \leq 1$}.
	\end{gather*}
	
	The proof will be concluded by showing that objects with the
	properties described in the preceding paragraph do not exist. If they
	existed, one could assume first
	\begin{gather*}
		X_i \subset \cball{\adim }{0}{M} \quad \text{for $i \in \nat$}
	\end{gather*}
	by moving pieces of $\nu_i$ by translations (here $\nu$ is a piece of
	$\nu_i$ if and only if $\nu = \nu_i \restrict Z$ for some connected
	component $Z$ of $\union{\oball{}{x}{1}}{x \in X_i}$) and then, since
	$X_i \neq \emptyset$ for $i \in \nat$, passing to a subsequence,
	\begin{gather*}
		X_i \to X \quad \text{in Hausdorff distance as $i \to
		\infty$}, \quad \card X \leq Q
	\end{gather*}
	for some nonempty, closed subset $X$ of $\cball{\adim }{0}{M}$ (cf.
	\cite[2.10.21]{MR41:1976}). Noting that given $0 < \varrho_1 <
	\varrho_2 < 1$
	\begin{gather*}
		\union{\oball{}{x}{\varrho_1}}{x \in X} \subset
		\union{\oball{}{x}{\varrho_2}}{x \in X_i}, \quad
		\union{\oball{}{x}{\varrho_2}}{x \in X} \supset
		\union{\oball{}{x}{\varrho_1}}{x \in X_i}
	\end{gather*}
	for large $i$, one could assume, possibly passing to another
	subsequence (cf. Allard \cite[6.4]{MR0307015}), that for some
	stationary, integral $\vdim $ varifold $\nu$ in
	\begin{gather*}
		U := \union{\oball{}{x}{1}}{x \in X}
	\end{gather*}
	satisfying
	\begin{gather*}
		T_x \nu = T \quad \text{for $\nu$ almost all $x \in U$}
	\end{gather*}
	there would hold
	\begin{gather*}
		\pairing{\nu_i}{\varphi} \to \pairing{\nu}{\varphi} \quad
		\text{as $i \to \infty$ for $\varphi \in \ccspace{\rel^\adim
		}$ with $\spt \varphi \subset U$}.
	\end{gather*}
	The inclusions previously noted, would show
	\begin{gather*}
		\nu ( U ) \leq ( Q-\delta ) \unitmeasure{\vdim }, \\
		\nu \big ( \union{\oball{}{x}{\varrho}}{x \in X} \big ) \geq (
		Q-1+\delta/2 ) \unitmeasure{\vdim } \varrho^\vdim  \quad
		\text{for $0 < \varrho \leq 1$}.
	\end{gather*}
	Since for $y,z \in X$
	\begin{gather*}
		s^{-1} | \project{T} (y-x) | \leq |y-x|, \\
		\{ x \in \rel^\adim  \with y-x \in T \} \cap \{ x \in
		\rel^\adim  \with z - x \in T \} = \emptyset \quad \text{if $y
		\neq z$},
	\end{gather*}
	these inequalities would imply by \ref{miniremark:planes}
	\begin{gather*}
		Q-1+\delta/2 \leq \liminf_{\varrho \pluslim{0}} \nu \big (
		\union{\oball{}{x}{\varrho}}{x \in X}
		\big)/(\unitmeasure{\vdim } \varrho^\vdim ) \\
		= {\textstyle\sum_{x \in X}} \density^\vdim  ( \nu, x ) \leq
		\nu (U) / \unitmeasure{\vdim } \leq Q-\delta;
	\end{gather*}
	a contradiction to $\sum_{x \in X} \density^\vdim  (\nu,x) \in \nat$.
\end{proof}
\begin{remark}
	The preceding argument follows closely Brakke \cite[5.3]{MR485012}.
\end{remark}
\begin{lemma} \label{lemma:planes_monotone}
	Suppose $0 < M < \infty$, $M \notin \nat$, $0 < \lambda_1 < \lambda_2
	< 1$, $m,n \in \nat$, $T \in \grass{\adim }{\vdim }$, $F$ is the
	family of all stationary, integral $\vdim $ varifolds in $\oball{\adim
	}{0}{1}$ such that
	\begin{gather*}
		T_x \mu = T \quad \text{for $\mu$ almost all $x \in
		\oball{\adim }{0}{1}$}, \quad \measureball{\mu}{\oball{\adim
		}{0}{1}} \leq M \unitmeasure{\vdim },
	\end{gather*}
	and $N$ is the supremum of all numbers
	\begin{gather*}
		( \unitmeasure{\vdim } r^\vdim  )^{-1}
		\measureball{\mu}{\cball{\adim }{0}{r}}
	\end{gather*}
	corresponding to all $\mu \in F$ and $\lambda_1 \leq r \leq
	\lambda_2$.

	Then for some $\mu \in F$ and some $\lambda_1 \leq r \leq \lambda_2$
	\begin{gather*}
		N = ( \unitmeasure{\vdim } r^\vdim  )^{-1}
		\measureball{\mu}{\cball{\adim }{0}{r}} < M.
	\end{gather*}
\end{lemma}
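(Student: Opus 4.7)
The plan proceeds in two steps: first, attain the supremum $N$ at some pair $(\mu, r) \in F \times [\lambda_1, \lambda_2]$ by a compactness and upper semicontinuity argument; then, rule out the equality $N = M$ by exploiting the slice decomposition of \ref{miniremark:planes} together with strict monotonicity in $s$ of each slice's contribution to $(\unitmeasure{\vdim} s^\vdim)^{-1} \measureball{\mu}{\cball{\adim}{0}{s}}$.

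For attainment, the family $F$ is sequentially compact under varifold convergence: the mass bound $\measureball{\mu}{\oball{\adim}{0}{1}} \leq M \unitmeasure{\vdim}$ passes to the limit by lower semicontinuity on the open ball, stationarity is preserved by continuity of the first variation in the test vector field, integrality of limits of integral varifolds follows from Allard's compactness theorem, and the tangent condition $T_x \mu = T$ for $\mu$ almost all $x$ is preserved because each $\mu_k$ is concentrated on the closed set $\rel^\adim \times \{T\}$ in $\rel^\adim \times \grass{\adim}{\vdim}$ (equivalently, by \ref{miniremark:planes} each has the rigid slice form $\sum_i Q_i^{(k)} \mathcal{H}^\vdim \restrict S_{y_i^{(k)}}$, stable under weak limits). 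The target functional
\begin{gather*}
	F \times [\lambda_1,\lambda_2] \ni (\mu,r) \mapsto (\unitmeasure{\vdim} r^\vdim)^{-1} \measureball{\mu}{\cball{\adim}{0}{r}}
\end{gather*}
is upper semicontinuous: given $\mu_k \to \mu$ and $r_k \to r$, for any $s > r$ one has $\cball{\adim}{0}{r_k} \subset \cball{\adim}{0}{s}$ eventually, so $\limsup_k \measureball{\mu_k}{\cball{\adim}{0}{r_k}} \leq \limsup_k \measureball{\mu_k}{\cball{\adim}{0}{s}} \leq \measureball{\mu}{\cball{\adim}{0}{s}}$ by the standard $\limsup$-on-compact-sets property, and letting $s \pluslim{r}$ yields $\measureball{\mu}{\cball{\adim}{0}{r}}$ by right-continuity of $s \mapsto \measureball{\mu}{\cball{\adim}{0}{s}}$. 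Hence $N$ is attained.

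For the strict inequality, suppose for contradiction $N = M$, attained at some $\mu \in F$ and $r \in [\lambda_1, \lambda_2]$. By \ref{miniremark:planes}, $\mu = \sum_i Q_i \mathcal{H}^\vdim \restrict S_{y_i}$ with distinct $y_i \in T^\perp \cap \oball{\adim}{0}{1}$ and $Q_i \in \nat$; since each $S_{y_i} \cap \cball{\adim}{0}{s}$ is a flat $\vdim$-ball of radius $(s^2 - |y_i|^2)^{1/2}$ when $|y_i| < s$ (and empty otherwise),
\begin{gather*}
	(\unitmeasure{\vdim} s^\vdim)^{-1} \measureball{\mu}{\cball{\adim}{0}{s}} = {\textstyle\sum_{|y_i| < s}} Q_i (1 - |y_i|^2/s^2)^{\vdim/2} \quad \text{for $0 < s < 1$}.
\end{gather*}
This is nondecreasing in $s$ and, as $s \pluslim{1}$, tends to $\sum_i Q_i (1 - |y_i|^2)^{\vdim/2} = \measureball{\mu}{\oball{\adim}{0}{1}}/\unitmeasure{\vdim} \leq M$. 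The assumption $N = M$ therefore forces the expression to equal $M$ identically on $[r, 1)$. But each summand $(1 - |y_i|^2/s^2)^{\vdim/2}$ is strictly increasing in $s$ on $\{s > |y_i|\}$ whenever $y_i \neq 0$, so constancy on $[r, 1)$ forces $y_i = 0$ for every $i$. Consequently $\mu = Q \mathcal{H}^\vdim \restrict (T \cap \oball{\adim}{0}{1})$ for some $Q \in \nat$, giving $M = (\unitmeasure{\vdim} r^\vdim)^{-1} \measureball{\mu}{\cball{\adim}{0}{r}} = Q \in \nat$, contradicting $M \notin \nat$.

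The crux is this final transition: one must combine the full rigidity from \ref{miniremark:planes} (so that $\mu$ is literally a sum of flat slices with integer multiplicities) with the strict monotonicity in $s$ of each individual slice's contribution to force every $y_i$ to the origin, and thereby extract $M \in \nat$. The other ingredients, namely compactness of $F$ and upper semicontinuity of the target functional, are essentially routine.
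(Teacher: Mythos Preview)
Your proof is correct and follows essentially the same approach as the paper: compactness of $F$ via Allard's theorem for attainment, and the explicit slice formula from \ref{miniremark:planes} together with the strict monotonicity of each slice contribution to obtain $N < M$. The only cosmetic differences are that the paper asserts full continuity (not just upper semicontinuity) of the functional on $F \times [\lambda_1,\lambda_2]$, and for the strict inequality works directly by the chain $\measureball{\mu}{\cball{\adim}{0}{r}} \leq \measureball{\mu}{\oball{\adim}{0}{1}} r^\vdim \leq M \unitmeasure{\vdim} r^\vdim$ (checking which inequality is strict according to whether $\spt \mu \subset T$) rather than by your contradiction-via-constancy argument.
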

\begin{proof}
	The proof uses the structure of the elements of $F$ described in
	\ref{miniremark:planes}. Since
	\begin{gather*}
		( \unitmeasure{\vdim } r^\vdim  )^{-1}
		\measureball{\mu}{\cball{\adim }{0}{r}}
	\end{gather*}
	depends continuously on $(\mu,r) \in F \times [\lambda_1,\lambda_2]$,
	the first part of the conclusion is a consequence of the fact that $F$
	is compact with respect to the weak topology by Allard
	\cite[6.4]{MR0307015}. To prove the second part, one notes
	\begin{gather*}
		( r^2 - \varrho^2 )^{\vdim /2} < ( 1 - \varrho^2 )^{\vdim /2}
		r^\vdim  \quad \text{whenever $0 < \varrho \leq r < 1$},
	\end{gather*}
	defines $\Theta : \perpproject{T} \lIm \spt \mu \rIm \to \nat$ such
	that $\Theta \circ \perpproject{T} | \oball{\adim}{0}{1} =
	\density^\vdim ( \mu, \cdot )$ and computes
	\begin{align*}
		\measureball{\mu}{\cball{\adim }{0}{r}} & = \sum_{x \in
		\perpproject{T} \lIm  \cball{\adim }{0}{r} \cap \spt \mu \rIm}
		\Theta (x) \unitmeasure{\vdim } ( r^2 - | x |^2 )^{\vdim /2}
		\\
		& \leq \Big ( \sum_{x \in \perpproject{T} \lIm
		\cball{\adim}{0}{r} \cap \spt \mu \rIm } \Theta (x)
		\unitmeasure{\vdim } ( 1 - | x |^2)^{\vdim
		/2} \Big ) r^\vdim  \\
		& \leq \measureball{\mu}{\oball{\adim }{0}{1}} r^\vdim  \leq M
		\unitmeasure{\vdim } r^\vdim .
	\end{align*}
	If $\spt \mu \not \subset T$, then the first or the second inequality
	in the computation is strict. Otherwise, the last inequality is strict
	because $M \notin \nat$.
\end{proof}
\begin{remark}
	Alternately, the second part can be obtained by use of the
	monotonicity formula (cf. \cite[(17.5)]{MR87a:49001}).
\end{remark}
\begin{lemma} [Quasi monotonicity] \label{lemma:quasi_monotonicity}
	Suppose $0 < M < \infty$, $M \notin \nat$, $0 < \lambda < 1$, and $m,n
	\in \nat$.
	
	Then there exists a positive, finite number $\varepsilon$ with the
	following property.
	
	If $a \in \rel^\adim $, $0 < r < \infty$, $\mu$ is an integral $\vdim
	$ varifold in $\oball{}{a}{r}$ with locally bounded first variation,
	\begin{gather*}
		\measureball{\mu}{\oball{}{a}{r}} \leq M \unitmeasure{\vdim }
		r^\vdim ,
	\end{gather*}
	and whenever $0 < \varrho < r$
	\begin{gather*}
		\measureball{\| \delta \mu \|}{\cball{}{a}{\varrho}} \leq
		\varepsilon \, \mu ( \cball{}{a}{\varrho} )^{1-1/\vdim }, \\
		{\textstyle\int_{\cball{}{a}{\varrho}}} | \eqproject{T_x\mu} -
		\project{T} | \ud \mu (x) \leq \varepsilon \,
		\measureball{\mu}{\cball{}{a}{\varrho}} \quad \text{for some
		$T \in \grass{\adim }{\vdim }$},
	\end{gather*}
	(here $0^0 := 1$), then
	\begin{gather*}
		\measureball{\mu}{\cball{}{a}{\varrho}} \leq M
		\unitmeasure{\vdim } \varrho^\vdim  \quad \text{whenever $0 <
		\varrho \leq \lambda r$}.
	\end{gather*}
\end{lemma}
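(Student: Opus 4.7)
The plan is to argue by contradiction and compactness, with Lemma \ref{lemma:planes_monotone} furnishing the decisive strict inequality that exploits $M \notin \nat$. Supposing the conclusion fails for every positive $\varepsilon$, I would obtain sequences $\varepsilon_i \pluslim 0$, integral $\vdim$ varifolds $\mu_i$ in $\oball{}{a_i}{r_i}$ satisfying the hypotheses with $\varepsilon_i$ in place of $\varepsilon$, and radii $\varrho_i \in (0, \lambda r_i]$ with $\measureball{\mu_i}{\cball{}{a_i}{\varrho_i}} > M \unitmeasure{\vdim} \varrho_i^\vdim$. Using translations and homotheties I would normalise $a_i = 0$ and $r_i = 1$, then pass to subsequences so that $\varrho_i \to \varrho_\infty \in [0, \lambda]$ and the planes $T_i \in \grass{\adim}{\vdim}$ witnessing the tilt smallness on $\cball{}{0}{1}$ converge to some $T$.

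In the main case $\varrho_\infty > 0$, varifold compactness (Allard \cite[6.4]{MR0307015}) together with the vanishing of $\| \delta \mu_i \|$ and of the tilt integral yields, along a subsequence, a weak limit $\mu$ that is a stationary integral $\vdim$ varifold in $\oball{}{0}{1}$ with $T_x \mu = T$ for $\mu$ almost every $x$ and $\measureball{\mu}{\oball{}{0}{1}} \leq M \unitmeasure{\vdim}$; thus $\mu$ lies in the family $F$ of Lemma \ref{lemma:planes_monotone}. Upper semi-continuity on closed balls combined with right-continuity of $\varrho \mapsto \mu(\cball{}{0}{\varrho})$ passes the failure inequality to the limit, giving $\mu(\cball{}{0}{\varrho_\infty}) \geq M \unitmeasure{\vdim} \varrho_\infty^\vdim$. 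Choosing $\lambda_1 \in (0, \varrho_\infty)$ and $\lambda_2 \in [\varrho_\infty, 1)$, Lemma \ref{lemma:planes_monotone} produces some $N < M$ dominating every density ratio of elements of $F$ at radii in $[\lambda_1, \lambda_2]$, contradicting the preceding lower bound.

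The case $\varrho_\infty = 0$ I would handle by rescaling: set $\nu_i = \mutau{\varrho_i}{0}(\mu_i)$ in $\oball{}{0}{1/\varrho_i}$, so that $\nu_i(\cball{}{0}{1}) > M \unitmeasure{\vdim}$ while the smallness of first variation and tilt persists with $\varepsilon_i$ on every concentric ball. Extracting a weak limit $\nu$ on all of $\rel^\adim$ that is stationary, integral, and $T$-planar almost everywhere, \ref{miniremark:planes} represents $\nu = \sum_k \Theta_k \mathcal{H}^\vdim \restrict L_k$ with $L_k$ parallel to $T$ and $\Theta_k \in \nat$; the mass bound transported to the limit forces $\sum_k \Theta_k \leq M$, hence $\leq \lfloor M \rfloor$ by $M \notin \nat$, while $\nu(\cball{}{0}{1}) \geq M \unitmeasure{\vdim}$ forces $\sum_k \Theta_k \geq M$, hence $\geq \lceil M \rceil$, a contradiction. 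The main technical obstacle is precisely this extraction: securing a uniform local mass bound for $\nu_i$ on compact subsets of $\rel^\adim$ as $1/\varrho_i \to \infty$, and propagating the global mass bound to the limit. I would address it via approximate monotonicity for $\varrho \mapsto \nu_i(\cball{}{0}{\varrho})/\varrho^\vdim$, whose logarithmic error must be absorbed by $\varepsilon_i$; if necessary, this requires pre-selecting $\varrho_i$ through \ref{miniremark:meanvalue} applied to $\varrho \mapsto \mu_i(\oball{}{0}{\varrho})$ and $\varrho \mapsto M \unitmeasure{\vdim} \varrho^\vdim$ as the largest radius at which equality is attained, thereby pinning the mass bound at the top of the rescaled domain and enabling the monotonicity bootstrap.
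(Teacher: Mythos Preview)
Your Case~1 is precisely the paper's argument, but the paper avoids your Case~2 entirely by an inductive reduction: it suffices to prove the conclusion only for $\lambda^2 r \leq \varrho \leq \lambda r$, since once that range is secured one may reapply the lemma with $r$ replaced by $\lambda r$ (the mass hypothesis $\measureball{\mu}{\oball{}{a}{\lambda r}} \leq M\unitmeasure{\vdim}(\lambda r)^\vdim$ follows from the closed-ball bound just established at $\varrho = \lambda r$, and the remaining hypotheses for $0 < \varrho < \lambda r$ are inherited). After this reduction the normalised bad radii lie in $[\lambda^2, \lambda]$, so $\varrho_\infty \geq \lambda^2 > 0$ always, and the contradiction via compactness and Lemma~\ref{lemma:planes_monotone} is immediate.

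Regarding your Case~2, be aware that the ``approximate monotonicity'' route is exactly what the paper flags as unavailable under the isoperimetric-type hypotheses (see the discussion opening Section~\ref{sect:approx}): the monotonicity identity controls the density-ratio defect between scales $\varrho_i$ and $1$ only up to an error of order $\varepsilon_i \lvert \log \varrho_i \rvert$, and nothing ties the decay of $\varepsilon_i$ to that of $\varrho_i$. Your fallback---replacing $\varrho_i$ by the supremum of radii in $(0,\lambda]$ at which the bound fails, so that $\mu_i(\cball{}{0}{\varrho}) \leq M\unitmeasure{\vdim}\varrho^\vdim$ for all $\varrho \in (\varrho_i, \lambda]$---does secure the uniform local mass bound after rescaling and can be pushed through; note, though, that \ref{miniremark:meanvalue} as stated selects the \emph{smallest} crossing radius rather than the largest, so one should take the supremum directly. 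Either way, the induction is the cleaner route.
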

\begin{proof}
	Using induction, one verifies that it is enough to prove the statement
	with $\lambda^2 r \leq \varrho \leq \lambda r$ replacing $0 < \varrho
	\leq \lambda r$ in the last line which is readily accomplished by a
	contradiction argument using Lemma \ref{lemma:planes_monotone} and Allard's
	compactness theorem for integral varifolds \cite[6.4]{MR0307015}.
\end{proof}
\begin{remark} \label{remark:quasi_monotonicity}
	Clearly,
	\begin{gather*}
		( \unitmeasure{\vdim } \varrho^\vdim  )^{-1}
		\measureball{\mu}{\cball{}{a}{\varrho}} \leq M \lambda^{-\vdim
		} \quad \text{whenever $0 < \varrho < r$}.
	\end{gather*}
\end{remark}
\begin{lemma} [Multilayer monotonicity with variable offset] \label{lemma:multilayer_monotonicity_offset}
	Suppose $m,n,Q \in \nat$, $0 \leq M < \infty$, $\delta
	> 0$, and $0 \leq s < 1$.
	
	Then there exists a positive, finite number $\varepsilon$ with the
	following property.
	
	If $X \subset \rel^\adim $, $T \in \grass{\adim }{\vdim }$, $0 \leq d
	< \infty$, $0 < r < \infty$, $0 < t < \infty$, $f : X \to \rel^\adim
	$,
	\begin{gather*}
		| \project{T} ( y-x ) | \leq s | y-x |, \quad | \project{T} (
		f(y) - f(x) ) | \leq s | f(y)-f(x) |, \\
		f(x) - x \in \cball{\adim }{0}{d} \cap T, \quad d \leq M t,
		\quad d + t \leq r
	\end{gather*}
	for $x,y \in X$, $\mu$ is an integral $\vdim $ varifold in
	$\union{\oball{}{x}{r}}{x \in X}$ with locally bounded first
	variation,
	\begin{gather*}
		{\textstyle\sum_{x\in X}} \density^\vdim _\ast ( \mu, x)
		\geq Q-1+\delta, \quad \measureball{\mu}{\oball{}{x}{r}} \leq
		M \unitmeasure{\vdim } r^\vdim  \quad \text{for $x \in X \cap \spt
		\mu$},
	\end{gather*}
	and whenever $0 < \varrho < r$, $x \in X \cap \spt \mu$
	\begin{gather*}
		\measureball{\| \delta \mu \|}{\cball{}{x}{\varrho}} \leq
		\varepsilon \, \mu ( \cball{}{x}{\varrho} )^{1-1/\vdim }, \quad
		{\textstyle\int_{\cball{}{x}{\varrho}}} | \eqproject{T_\xi\mu}
		- \project{T} | \ud \mu (\xi) \leq \varepsilon \,
		\measureball{\mu}{\cball{}{x}{\varrho}},
	\end{gather*}
	then
	\begin{gather*}
		\mu \big ( \bunion{\{ y \in \oball{}{f(x)}{t} \with |
		\project{T} (y-x) | > s |y-x| \}}{x \in X} \big ) \geq
		(Q-\delta) \unitmeasure{\vdim } t^\vdim .
	\end{gather*}
\end{lemma}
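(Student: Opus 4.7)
\emph{Overall strategy.} My plan is to prove the lemma by a contradiction-and-compactness argument mirroring the proof of Lemma \ref{lemma:multilayer_monotonicity}, the new geometric input being that $f(x) - x \in T$ forces the affine planes $x + T$ and $f(x) + T$ to coincide. Consequently, in the planar limit scenario of \ref{miniremark:planes}, every layer of the limiting varifold passing through a point $x \in X$ meets $\oball{}{f(x)}{t}$ in a full $\vdim$-disk of $\mathcal{H}^\vdim$ measure $\unitmeasure{\vdim} t^\vdim$ lying in $x + T$, and every point of that disk except $x$ itself satisfies $|\project{T}(y-x)| = |y-x| > s|y-x|$ and hence belongs to the cone appearing in the statement.

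\emph{Setup and passage to the limit.} Suppose the lemma fails and extract sequences $\varepsilon_i \downarrow 0$, $X_i, T_i, d_i, t_i, f_i, \mu_i$ violating the conclusion. By applying isometries and homotheties one may normalise $T_i = T \in \grass{\adim}{\vdim}$ and $r_i = 1$. The tilt hypothesis gives $|\perpproject{T}(x-y)| \geq (1-s^2)^{1/2} |x-y|$ for distinct $x,y \in X_i$, and combining this with Lemma \ref{lemma:lower_density_bound} yields $\card X_i \leq Q$ as in the proof of Lemma \ref{lemma:multilayer_monotonicity}. Passing to subsequences one obtains $d_i \to d$, $t_i \to t > 0$, $X_i \to X$ in Hausdorff distance with $\card X \leq Q$, and a limit offset $f : X \to \rel^\adim$ with $f(x) - x \in \cball{\adim}{0}{d} \cap T$ and the two tilt inequalities preserved. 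By Allard's compactness theorem \cite[6.4]{MR0307015}, on a further subsequence $\mu_i$ converges weakly to a stationary integral $\vdim$ varifold $\mu$ on $U := \union{\oball{}{x}{1}}{x \in X}$ with $T_x \mu = T$ for $\mu$ almost every $x$. Setting
\begin{gather*}
W := \bunion{\{ y \in \oball{}{f(x)}{t} \with |\project{T}(y-x)| > s|y-x| \}}{x \in X},
\end{gather*}
any open set $V$ with $\Clos{V}$ a compact subset of $W$ is contained in the analogous set built from $X_i, f_i, t_i$ for all sufficiently large $i$, so lower semi-continuity of mass on open sets together with the assumed failure gives $\mu(V) \leq (Q - \delta) \unitmeasure{\vdim} t^\vdim$ and hence $\mu(W) \leq (Q - \delta) \unitmeasure{\vdim} t^\vdim$.

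\emph{Lower bound and main obstacle.} By \ref{miniremark:planes}, $\mu$ decomposes as a locally finite sum of integer multiples of $\mathcal{H}^\vdim$ on affine $\vdim$-planes parallel to $T$. For distinct $x, y \in X$ the tilt condition forces $y - x \notin T$, so the planes $x + T$ and $y + T$ are disjoint, and the geometric observation of the first paragraph combined with summation over these disjoint layers gives
\begin{gather*}
\mu(W) \geq \bigl ( {\textstyle\sum_{x \in X \cap \spt\mu}} \density^\vdim(\mu, x) \bigr ) \unitmeasure{\vdim} t^\vdim.
\end{gather*}
The principal difficulty is transferring the assumed lower bound $\sum_{x \in X_i} \density^\vdim_\ast(\mu_i, x) \geq Q - 1 + \delta$ to the limiting varifold. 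I would handle this by applying Lemma \ref{lemma:multilayer_monotonicity} to $\mu_i$ (with $\delta$ there replaced by $\delta/2$) to obtain $\mu_i(\union{\oball{}{x}{\varrho}}{x \in X_i}) \geq (Q - \delta/2) \unitmeasure{\vdim} \varrho^\vdim$ for $0 < \varrho \leq 1$, transferring this measure-level inequality to $\mu$ by lower semi-continuity for every fixed $\varrho$, and finally letting $\varrho \downarrow 0$; by \ref{miniremark:planes} the left side converges to $({\textstyle\sum_{x \in X}} \density^\vdim(\mu, x)) \unitmeasure{\vdim} \varrho^\vdim$, yielding ${\textstyle\sum_{x \in X}} \density^\vdim(\mu, x) \geq Q - \delta/2 > Q - 1$. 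Since this sum is a positive integer it must be $\geq Q$, which together with the upper bound $\mu(W) \leq (Q-\delta)\unitmeasure{\vdim} t^\vdim$ produces the required contradiction.
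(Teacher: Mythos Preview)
Your overall architecture---contradiction, Allard compactness to a planar limit, a lower density bound transferred via Lemma~\ref{lemma:multilayer_monotonicity}, and integrality of $\sum_{x\in X}\density^\vdim(\mu,x)$---is the same as the paper's. Your endgame, using $\delta/2$ in Lemma~\ref{lemma:multilayer_monotonicity} to force the integer sum to be at least $Q$, is a harmless variant of the paper's conclusion $\sum_{x\in X}\density^\vdim(\mu,x)=Q-\delta\notin\nat$.

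There is, however, a genuine gap: after normalising $r_i=1$ you assert $t_i\to t>0$, but the hypotheses give only $d_i\leq M t_i$ and $d_i+t_i\leq 1$, which do not prevent $t_i\to 0$. If $t=0$, both the upper bound $\mu(W)\leq(Q-\delta)\unitmeasure{\vdim}t^\vdim$ and your lower bound $\mu(W)\geq Q\,\unitmeasure{\vdim}t^\vdim$ collapse to $0$ and no contradiction results. The paper closes this gap \emph{before} normalising, by invoking Lemma~\ref{lemma:quasi_monotonicity} and Remark~\ref{remark:quasi_monotonicity}: these propagate the mass bound $\measureball{\mu}{\oball{}{x}{r}}\leq M\unitmeasure{\vdim}r^\vdim$ down to all smaller radii (at the cost of replacing $M$ by $2M$), so one may replace $r_i$ by $d_i+t_i$ while retaining every hypothesis. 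After that reduction, normalising $r_i=1$ gives $d_i+t_i=1$ and hence $t_i\geq 1/(1+M)$, which is what the compactness argument needs.
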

\begin{proof}
	If the lemma were false for some $m,n,Q \in \nat$, $0 \leq M <
	\infty$, $0 < \delta < 1$, and $0 < s < 1$, there would exist a
	sequence $\varepsilon_i$ with $\varepsilon_i \downarrow 0$ as $i \to
	\infty$ and sequences $X_i$, $T_i$, $d_i$, $r_i$, $t_i$, $f_i$, and
	$\mu_i$ showing that $\varepsilon_i$ does not satisfy the conclusion
	of the lemma.
	
	In view of Lemma \ref{lemma:quasi_monotonicity} and Remark
	\ref{remark:quasi_monotonicity} one could assume $d_i + t_i = r_i$ for
	$i \in \nat$ by replacing $M$ by $2M$. Using isometries and
	homotheties, one could also assume for some $T \in \grass{\adim }{\vdim
	}$
	\begin{gather*}
		T_i = T, \quad r_i = 1
	\end{gather*}
	for $i \in \nat$. Finally, one could assume, possibly replacing $M$ by
	a larger number,
	\begin{gather*}
		X_i \subset \spt \mu_i, \quad \card X_i \leq Q, \quad X_i
		\subset \cball{\adim }{0}{M}
	\end{gather*}
	for $i \in \nat$.

	Therefore passing to a subsequence (cf. \cite[2.10.21]{MR41:1976}),
	there would exist a non\-empty, closed subset $X$ of $\cball{\adim
	}{0}{M}$, $0 \leq d < \infty$, $0 \leq t < \infty$, and a nonempty,
	closed subset $f$ of $\rel^\adim  \times \rel^\adim $ such that $\card
	X \leq Q$,
	\begin{gather*}
		\text{$d_i \to d$ and $t_i \to t$ as $i \to \infty$}, \\
		\text{$X_i \to X$ and $f_i \to f$ in Hausdorff distance as $i
		\to \infty$}.
	\end{gather*}
	There would hold
	\begin{gather*}
		s^{-1} | \project{T} (y-x) | \leq | y-x | \quad \text{for $x,y
		\in X$}, \quad d \leq M t, \quad d + t = 1, \quad t > 0.
	\end{gather*}
	Moreover, since
	\begin{gather*}
		( 1 - s^2 )^{1/2} | y_i - x_i | \leq \big | \perpproject{T} (
		y_i - x_i ) \big | = \big | \perpproject{T} ( f_i (y_i) - f_i
		(x_i) ) \big | \leq | f_i (y_i) - f_i (x_i) |
	\end{gather*} for $x_i, y_i \in X_i$, and $i \in \nat$, $f$ were a
	function and one could readily verify $\dmn f = X$, and
	\begin{gather*}
		f(x) - x \in \cball{\adim }{0}{d} \cap T \quad \text{for $x
		\in X$}, \\
		s^{-1} | \project{T} ( f(y) - f(x) ) | \leq | f(y) - f(x) |
		\quad \text{for $x,y \in X$}.
	\end{gather*}
	
	Possibly passing to another subsequence, one could construct (cf.
	Allard \cite[6.4]{MR0307015}) a stationary, integral $\vdim $ varifold
	$\mu$ in $U := \union{\oball{}{x}{1}}{x \in X}$ with
	\begin{gather*}
		T_x \mu = T \quad \text{for $\mu$ almost all $x \in U$}
	\end{gather*}
	such that
	\begin{gather*}
		\pairing{\mu_i}{\varphi} \to \pairing{\mu}{\varphi} \quad
		\text{as $i \to \infty$ for $\varphi \in \ccspace{\rel^\adim
		}$ with $\spt \varphi \subset U$}.
	\end{gather*}
	According to Lemma \ref{lemma:multilayer_monotonicity} one would estimate
	for large $i$
	\begin{gather*}
		\mu_i \big ( \bunion{\oball{}{x}{\varrho}}{x \in X_i} \big )
		\geq (Q-\delta) \unitmeasure{\vdim } \varrho^\vdim  \quad
		\text{whenever $0 < \varrho \leq 1$},
	\end{gather*}
	hence
	\begin{gather*}
		\mu \big ( \bunion{\oball{}{x}{\varrho}}{x \in X} \big ) \geq
		(Q-\delta) \unitmeasure{\vdim } \varrho^\vdim  \quad
		\text{whenever $0 < \varrho \leq 1$}.
	\end{gather*}
	Therefore, passing to the limit $\varrho \pluslim{0}$, one would infer
	the lower bound (noting \ref{miniremark:planes})
	\begin{gather*}
		{\textstyle\sum_{x \in X}} \density^\vdim  ( \mu, x ) \geq
		Q-\delta.
	\end{gather*}
	
	For $y, z \in \rel^\adim $, $0 < \varrho < \infty$ define $V
	(y,z,\varrho)$ to be the set of all $x \in \oball{}{z}{\varrho}$ such
	that $s^{-1} | \project{T} ( y-x ) | > |y-x|$, and note that every
	open subset of $\rel^\adim$ with compact closure in $\union{V
	(x,f(x),t)}{x \in X}$ would be contained in $\union{V ( x, f_i (x),
	t_i )}{x \in X_i}$ for large $i$; hence
	\begin{gather*}
		\mu \big ( \union{ V (x,f(x),t)}{x \in X} \big ) \leq
		\liminf_{i \to \infty} \mu_i \big ( \union{V (x,f_i (x),t_i
		)}{x \in X_i} \big ) \leq ( Q-\delta ) \unitmeasure{\vdim }
		t^\vdim .
	\end{gather*}
	On the other hand \ref{miniremark:planes} would imply in conjunction
	with the fact
	\begin{gather*}
		\{ x \in \rel^\adim  \with x - y \in T \} \cap \{ x \in
		\rel^\adim  \with x - z \in T \} = \emptyset
	\end{gather*}
	for $y,z \in X$ with $y \neq z$ and the lower bound previously derived
	\begin{gather*}
		 \mu \big ( \union{V (x,f(x),t)}{x \in X} \big ) \geq \big (
		 {\textstyle\sum_{x \in X}} \density^\vdim (\mu,x) \big )
		 \unitmeasure{\vdim } t^\vdim  \geq ( Q-\delta )
		 \unitmeasure{\vdim } t^\vdim ,
	\end{gather*}
	hence $\sum_{x \in X} \density^\vdim  ( \mu, x ) = Q-\delta$ which is
	incompatible with $Q-\delta \notin \nat$.
\end{proof}
\begin{lemma} \label{lemma:inverse_multilayer_monotonicity}
	Suppose $m, n, Q \in \nat$, $0 < \delta_1 \leq 1$, $0 < \delta_2 \leq
	1$, $0 \leq s < 1$, $0 \leq s_0 < 1$, $0 \leq M < \infty$, and $0 <
	\lambda < 1$ is uniquely defined by the requirement
	\begin{gather*}
		( 1 - \lambda^2 )^{\vdim /2} = ( 1 - \delta_2 ) + \Big (
		\frac{(s_0)^2}{1-(s_0)^2} \Big)^{\vdim /2} \lambda^\vdim .
	\end{gather*}
	
	Then there exists a positive, finite number $\varepsilon$ with the
	following property.
	
	If $X \subset \rel^\adim $, $T \in \grass{\adim }{\vdim }$, $0 \leq d
	< \infty$, $0 < r < \infty$, $0 < t < \infty$, $\zeta \in \rel^\adim
	$,
	\begin{gather*}
		\card \project{T} ( X ) = 1, \quad \zeta \in \cball{\adim
		}{0}{d} \cap T, \quad d \leq M t, \quad d + t \leq r,
	\end{gather*}
	$\mu$ is an integral $\vdim $ varifold in $\union{\oball{}{x}{r}}{x
	\in X}$ with locally bounded first variation,
	\begin{gather*}
		\density^\vdim  ( \mu, x ) \in \nat \quad \text{for $x \in
		X$}, \\
		{\textstyle\sum_{x \in X}} \density^\vdim  ( \mu, x) = Q,
		\qquad \measureball{\mu}{\oball{}{x}{r}} \leq M
		\unitmeasure{\vdim } r^\vdim  \quad \text{for $x \in X$},
	\end{gather*}
	and whenever $0 < \varrho < r$, $x \in X$
	\begin{gather*}
		\measureball{\| \delta \mu \|}{\cball{}{x}{\varrho}} \leq
		\varepsilon \, \mu ( \cball{}{x}{\varrho} )^{1-1/\vdim },
		\quad {\textstyle\int_{\cball{}{x}{\varrho}}} |
		\eqproject{T_\xi\mu} - \project{T} | \ud \mu ( \xi ) \leq
		\varepsilon \, \measureball{\mu}{\cball{}{x}{\varrho}}
	\end{gather*}
	satisfying
	\begin{gather*}
		\mu \big ( \bunion{ \{ y \in \oball{}{x + \zeta}{t} \with |
		\project{T} ( y - x ) | > s_0 | y - x | \}}{x \in X} \big )
		\leq ( Q + 1 - \delta_2 ) \unitmeasure{\vdim } t^\vdim ,
	\end{gather*}
	then the following two statements hold:
	\begin{enumerate}
		\item \label{item:inverse_multilayer_monotonicity:upper_bound}
		If $0 < \tau \leq \lambda t$, then
		\begin{gather*}
			\mu \big ( \union{\cball{}{x}{\tau}}{x \in X} \big )
			\leq ( Q + \delta_1 ) \unitmeasure{\vdim } \tau^\vdim
			.
		\end{gather*}
		\item \label{item:inverse_multilayer_monotonicity:lip_related}
		If $\xi \in \rel^\adim$ with $\dist (\xi, X) \leq \lambda t
		/2$ and
		\begin{gather*}
			\measureball{\mu}{\cball{\adim}{\xi}{\varrho}} \geq
			\delta_1 \unitmeasure{\vdim} \varrho^\vdim \quad
			\text{for $0 < \varrho < \delta_1 \dist ( \xi, X )$},
		\end{gather*}
		then for some $x \in X$
		\begin{gather*}
			| \project{T} ( y - x ) | \geq s | y - x |.
		\end{gather*}
	\end{enumerate}
\end{lemma}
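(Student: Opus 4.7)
The plan is to argue by contradiction, mirroring the compactness strategy of Lemma \ref{lemma:multilayer_monotonicity} and Lemma \ref{lemma:multilayer_monotonicity_offset}. Supposing no $\varepsilon$ works, one obtains $\varepsilon_i \downarrow 0$ and counterexamples $(X_i, T_i, d_i, r_i, t_i, \zeta_i, \mu_i)$ each failing either \eqref{item:inverse_multilayer_monotonicity:upper_bound} (for some $\tau_i \leq \lambda t_i$) or \eqref{item:inverse_multilayer_monotonicity:lip_related} (for some $\xi_i$), and passes to a subsequence so that the same alternative persists. After normalising with isometries and homotheties to achieve $T_i = T$ and $r_i = 1$, and translating so that $\project{T}(X_i) = \{0\}$ (permissible since $\card \project{T}(X_i) = 1$), one has $X_i \subset T^\perp$ with $\card X_i \leq Q$ by Lemma \ref{lemma:lower_density_bound}; extract Hausdorff limits $X_i \to X$, $\zeta_i \to \zeta$, $d_i \to d$, $t_i \to t > 0$. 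Allard's compactness theorem \cite[6.4]{MR0307015} yields a subsequential varifold limit $\mu$ on $U := \union{\oball{}{x}{1}}{x \in X}$ that is stationary integral with $T_x \mu = T$ for $\mu$ almost every $x$, and upper semicontinuity of density preserves $\density^\vdim(\mu, x) \in \nat$ at each $x \in X$ with $\sum_{x \in X}\density^\vdim(\mu, x) = Q$. By \ref{miniremark:planes}, $\spt \mu$ is a disjoint union of planes $p + T$, $p \in T^\perp$, with integer multiplicities $k_p$.

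The key geometric step is to show that every plane $p + T$ of $\spt \mu$ with $p \notin X$ satisfies $\dist(p, X) \geq \lambda t$. Passing the upper cone hypothesis to the limit, the planes through points of $X$ account for exactly $Q \unitmeasure{\vdim}t^\vdim$ of the upper cone mass (each disk $(x+T) \cap \oball{}{x+\zeta}{t}$ lies entirely in the upper cone at $x$ and the multiplicities sum to $Q$), so the extra planes together contribute at most $(1-\delta_2)\unitmeasure{\vdim}t^\vdim$. For an extra plane $P = p + T$ and the closest $x_* \in X$ to $p$, a direct computation inside $P$ exhibits an annulus of area at least $\unitmeasure{\vdim}[(t^2 - |p-x_*|^2)^{\vdim/2} - (s_0^2/(1-s_0^2))^{\vdim/2}|p-x_*|^\vdim]_+$ contained in the upper cone at $x_*$; setting $u = |p - x_*|/t$, the bound rearranges to
\begin{gather*}
	(1-u^2)^{\vdim/2} \leq (1-\delta_2) + (s_0^2/(1-s_0^2))^{\vdim/2} u^\vdim,
\end{gather*}
which by strict monotonicity of both sides and the defining equation of $\lambda$ is equivalent to $u \geq \lambda$.

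In the case that \eqref{item:inverse_multilayer_monotonicity:upper_bound} fails, extract $\tau_i \to \tau \in (0, \lambda t]$; every extra plane satisfies $\dist(p, X) \geq \lambda t \geq \tau$ and contributes nothing to $\union{\cball{}{x}{\tau}}{x \in X}$, while intersecting this union with any plane $x_0 + T$ through $x_0 \in X$ gives only the disk of radius $\tau$ around $x_0$ (all candidate disks in $x_0 + T$ are concentric at $x_0$), for total mass $Q\unitmeasure{\vdim}\tau^\vdim$; together with upper semicontinuity of mass on compacts this contradicts the limit of $(Q+\delta_1)\unitmeasure{\vdim}\tau_i^\vdim < \mu_i(\union{\cball{}{x}{\tau_i}}{x \in X_i})$. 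In the case that \eqref{item:inverse_multilayer_monotonicity:lip_related} fails, extract $\xi_i \to \xi$ with $\dist(\xi, X) \leq \lambda t/2$; if $\dist(\xi_i, X_i) \to 0$ then $\xi \in X$ and the conclusion is automatic with $x = \xi$, else the density lower bound passes to the limit and forces $\xi \in \spt \mu$, so $\xi$ lies in a plane $p + T$ of $\spt \mu$; since $\dist(\xi, X) < \lambda t$, this plane is not extra, giving $p = x_0$ for some $x_0 \in X$ and $\xi - x_0 \in T$, hence $|\project{T}(\xi - x_0)| = |\xi - x_0|$, while the limit of $|\project{T}(\xi_i - x)| < s|\xi_i - x|$ over all $x \in X_i$ gives in particular $|\xi - x_0| \leq s|\xi - x_0|$, forcing $\xi = x_0 \in X$ and contradicting $\dist(\xi, X) > 0$.

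The main obstacle will be the careful handling of the degenerate regimes $\tau_i \to 0$ in the first case and $\dist(\xi_i, X_i) \to 0$ in the second; the former is controlled by Lemma \ref{lemma:quasi_monotonicity}, which keeps density ratios bounded by $M$ and hence makes the failure impossible for sufficiently small $\tau_i$ (alternatively by a rescaling argument), while the latter reduces at once to the trivial assertion $\xi \in X$. Once these degeneracies are disposed of, the extraction of $\dist(p,X) \geq \lambda t$ from the upper cone budget is the heart of the argument and the two cases close by the short reductions indicated above.
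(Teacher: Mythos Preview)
Your overall strategy---compactness, passage to the planar limit via \ref{miniremark:planes}, and the key geometric bound $\dist(p,X)\geq\lambda t$ for the ``extra'' planes---matches the paper's. The serious gap is in your handling of the degenerate regimes, which you correctly flag as the main obstacle but then dispose of incorrectly.

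For part \eqref{item:inverse_multilayer_monotonicity:upper_bound}, your claim that Lemma \ref{lemma:quasi_monotonicity} rules out failure for small $\tau_i$ is false: quasi-monotonicity only gives $\mu_i(\cball{}{x}{\tau_i})\leq M\unitmeasure{\vdim}\tau_i^\vdim$, hence $\mu_i(\union{\cball{}{x}{\tau_i}}{x\in X_i})\leq QM\unitmeasure{\vdim}\tau_i^\vdim$, and in general $QM>Q+\delta_1$. (The paper does use quasi-monotonicity, but for a different reduction: to $d+t=r$, which after normalising $r_i=1$ ensures $t_i\geq 1/(M+1)$ so that $t>0$.) The paper's actual mechanism for $\tau\to 0$ is an \emph{iteration}: first reduce to $\max\{\delta_1,\delta_2\}\leq 1/2$, then prove only the range $\lambda^2\leq\tau/t\leq\lambda$ by contradiction. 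Once that range is established, the conclusion $\mu(\union{\cball{}{x}{\tau'}}{x\in X})\leq(Q+\delta_1)\unitmeasure{\vdim}(\tau')^\vdim$ at any $\tau'\in[\lambda^2 t,\lambda t]$ is---because $Q+\delta_1\leq Q+1-\delta_2$---itself an admissible upper-cone hypothesis at the new scale $t'=\tau'$ (with $\zeta'=0$, $d'=0$), so one may recurse down through $[\lambda^{j+2}t,\lambda^{j+1}t]$ to reach every $\tau\in(0,\lambda t]$. Your vague ``rescaling argument'' does not supply this feedback step.

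For part \eqref{item:inverse_multilayer_monotonicity:lip_related}, your claim that $\dist(\xi_i,X_i)\to 0$ makes ``the conclusion automatic with $x=\xi$'' is confused: you are arguing by contradiction, so the conclusion already fails for each $\xi_i$, and knowing $\xi\in X$ in the limit gives no contradiction (nor does the density lower bound pass usefully, since its range $\varrho<\delta_1\dist(\xi_i,X_i)$ collapses). The paper handles this by the same iteration, now invoking part \eqref{item:inverse_multilayer_monotonicity:upper_bound} to shrink $t$ to $\lambda^j t$ until $\lambda^2/2\leq\dist(\xi,X)/t\leq\lambda/2$.

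A minor point: your appeal to ``upper semicontinuity of density'' to get $\sum_{x\in X}\density^\vdim(\mu,x)=Q$ is not quite right when several points of $X_i$ collapse to one point of $X$; the paper instead applies Lemma \ref{lemma:multilayer_monotonicity} to the $\mu_i$ and passes the resulting mass lower bound to the limit, obtaining $\sum_{x\in X}\density^\vdim(\mu,x)\geq Q$ (equality then follows from the upper-cone bound and integrality).
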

\begin{proof} [Proof of \eqref{item:inverse_multilayer_monotonicity:upper_bound}]
	One may first assume $\max \{ \delta_1 , \delta_2 \} \leq 1/2$
	and then $\lambda^2 \leq \tau / t \leq \lambda$ by iteration
	of the result observing that the remaining assertion implies
	inductively
	\begin{gather*}
		\mu \big ( \union{\cball{}{x}{\lambda^{-i}\tau}}{x \in
		X} \big) \leq ( Q + \delta_1 ) \unitmeasure{\vdim } (
		\lambda^{-i} \tau)^\vdim 
	\end{gather*}
	whenever $i \in \nat$, $\lambda^{-i} \tau \leq \lambda t$.
	Moreover, in view of Lemma \ref{lemma:quasi_monotonicity} and Remark
	\ref{remark:quasi_monotonicity}, only the case $d + t = r$
	needs to be considered.
	
	The remaining assertion will be proved by contradiction. If it
	were false for some $m, n, Q \in \nat$, $0 < \delta_1 \leq
	1/2$, $0 < \delta_2 \leq 1/2$, $0 < s_0 < 1$, and $0 \leq M <
	\infty$, there would exist a sequence $\varepsilon_i$ with
	$\varepsilon_i \downarrow 0$ as $i \to \infty$ and sequences
	$X_i$, $T_i$, $d_i$, $r_i$, $t_i$ $\zeta_i$, $\mu_i$, and
	$\tau_i$ with $i \in \nat$ showing that $\varepsilon_i$ does
	not satisfy the assertion.
	
	The argument follows the pattern of
	Lemma \ref{lemma:multilayer_monotonicity_offset}. First, one could
	assume for some $T \in \grass{\adim }{\vdim }$
	\begin{gather*}
		T_i = T, \quad r_i = 1
	\end{gather*}
	for $i \in \nat$ and then noting $\card X_i \leq Q$ that $X_i
	\subset \cball{\adim }{0}{M}$ and hence, possibly passing to a
	subsequence, the existence of real numbers $d$, $t$, $\tau$,
	of $\zeta \in \rel^\adim $, of a nonempty, closed subset $X$
	of $\cball{\adim }{0}{M}$, see \cite[2.10.21]{MR41:1976}, and
	of a stationary, integral $\vdim $ varifold $\mu$ in $U :=
	\union{\oball{}{x}{1}}{x \in X}$, see Allard
	\cite[6.4]{MR0307015}, such that $\card X \leq Q$, and, as $i
	\to \infty$,
	\begin{gather*}
		d_i \to d, \quad t_i \to t, \quad \tau_i \to \tau,
		\quad \zeta_i \to \zeta, \\
		X_i \to X \quad \text{in Hausdorff distance}, \\
		\pairing{\mu_i}{\varphi} \to \pairing{\mu}{\varphi}
		\quad \text{for $\varphi \in \ccspace{\rel^\adim }$
		with $\spt \varphi \subset U$},
	\end{gather*}
	and additionally
	\begin{gather*}
		T_x \mu = T \quad \text{for $\mu$ almost all $x \in
		U$}.
	\end{gather*}
	Clearly,
	\begin{gather*}
		d \leq M t, \quad d + t = 1, \quad t > 0, \quad
		\lambda^2 \leq \tau / t \leq \lambda, \\
		\card \project{T} ( X ) = 1, \quad \zeta \in
		\cball{\adim }{0}{d} \cap T,
	\end{gather*}
	and one would readily verify
	\begin{gather*}
		\mu \big ( \bunion{ \{ y \in \oball{}{x+\zeta}{t}
		\with | \project{T} ( y-x ) | > s_0 |y-x| \}}{x \in X}
		\big ) \leq ( Q + 1 - \delta_2 ) \unitmeasure{\vdim }
		t^\vdim , \\
		\mu \big ( \union{\cball{}{x}{\tau}}{x \in X} \big )
		\geq ( Q + \delta_1 ) \unitmeasure{\vdim } \tau^\vdim
		.
	\end{gather*}
	Moreover, Lemma \ref{lemma:multilayer_monotonicity} would imply with
	$S_x := \{ z \in \rel^\adim  \with \perpproject{T} ( z - x ) =
	0 \}$ for $x \in \rel^\adim $
	\begin{gather*}
		\mu \big ( \union{\oball{}{x}{\varrho}}{x \in X} \big
		) \geq ( Q - \delta_1 ) \unitmeasure{\vdim }
		\varrho^\vdim  \quad \text{for $0 < \varrho \leq 1$},
		\qquad
		{\textstyle\sum_{x \in X}} \density^\vdim  ( \mu, x )
		\geq Q, \\
		{\textstyle\sum_{x \in X}} \density^\vdim  ( \mu , x )
		\big ( \mathcal{H}^\vdim  \restrict S_x \big ) ( A )
		\leq \mu (A) \quad \text{for $A \subset U$}.
	\end{gather*}

	Therefore if $x \in X$, $y \in \spt \mu$, $\perpproject{T} ( y
	) \notin \perpproject{T} ( X )$, $0 < | \perpproject{T} (y-x)
	| = h < t$, then one would find
	\begin{gather*}
		\{ z \in S_y \with | \project{T} ( z - x ) | \leq s_0
		| z - x | \} = S_y \cap \cball{}{ x + \perpproject{T}
		( y - x)}{( s_0^{-2} - 1 )^{-1/2} h}, \\
		\begin{aligned}
			& \phantom{\leq} \, \big ( ( 1 -
			(h/t)^2)^{\vdim /2} - ( s_0^{-2} - 1 )^{-\vdim
			/2} ( h/t )^\vdim \big ) \unitmeasure{\vdim }
			t^\vdim  \\
			& = \big ( \mathcal{H}^\vdim  \restrict S_y
			\big ) ( \oball{}{x+\zeta}{t} ) - \big (
			\mathcal{H}^\vdim \restrict S_y \big) ( \{ z
			\in \rel^\adim  \with | \project{T} ( z - x )
			| \leq s_0 | z - x | \} ) \\
			& \leq \big ( \mathcal{H}^\vdim  \restrict S_y
			\big ) ( \{ z \in \oball{}{x+\zeta}{t} \with |
			\project{T} ( z - x ) | > s_0 | z - x | \}) \\
			& \leq ( 1 - \delta_2 ) \unitmeasure{\vdim }
			t^\vdim ,
		\end{aligned}
	\end{gather*}
	hence $h \geq \lambda t$, in particular, since $\lambda t \geq
	\tau$ and $\card \project{T} (X) = 1$,
	\begin{gather*}
		( \spt \mu ) \cap \union{\oball{}{x}{\tau}}{x \in X} =
		\union{ S_x \cap \oball{}{x}{\tau}}{x \in X}, \quad
		\mu \big ( \union{\cball{}{x}{\tau}}{x \in X} \big ) =
		Q \unitmeasure{\vdim } \tau^\vdim 
	\end{gather*}
	contradicting the previously derived lower bound because $\tau
	> 0$.
\end{proof}
\begin{proof}
[Proof of \eqref{item:inverse_multilayer_monotonicity:lip_related}]
	On may first assume $\max \{ \delta_1 , \delta_2 \} \leq 1/2$,
	then
	\begin{gather*}
		\lambda^2 / 2 \leq \dist (\xi,X) / t \leq \lambda/2
	\end{gather*}
	by part
	\eqref{item:inverse_multilayer_monotonicity:upper_bound}, and
	$1 \leq r/t \leq M + 1$ by Lemma \ref{lemma:quasi_monotonicity} and
	Remark \ref{remark:quasi_monotonicity}.
	
	The remaining assertion will be proved by contradiction. If it
	were false for some $m, n, Q \in \nat$, $0 < \delta_1 \leq
	1/2$, $0 < \delta_2 \leq 1/2$, $0 \leq s_0 < 1$, $0 \leq s <
	1$, and $0 \leq M < \infty$, there would exist a sequence
	$\varepsilon_i$ with $\varepsilon_i \downarrow 0$ as $i \to
	\infty$ and sequences $X_i$, $T_i$, $d_i$, $r_i$, $t_i$
	$\zeta_i$, $\mu_i$, and $\xi_i$ with $i \in \nat$ showing that
	$\varepsilon_i$ does not satisfy the assertion.
	
	The argument follows the pattern of part
	\eqref{item:inverse_multilayer_monotonicity:upper_bound}.
	First, one could assume for some $T \in \grass{\adim }{\vdim
	}$
	\begin{gather*}
		T_i = T, \quad r_i = 1
	\end{gather*}
	for $i \in \nat$ and then noting $\card X_i \leq Q$ that $X_i
	\subset \cball{\adim }{0}{M}$ and hence, possibly passing to a
	subsequence, the existence of real numbers $d$, $t$, of
	$\zeta, \xi \in \rel^\adim $, of a nonempty, closed subset $X$
	of $\cball{\adim }{0}{M}$, see \cite[2.10.21]{MR41:1976}, and
	of a stationary, integral $\vdim $ varifold $\mu$ in $U :=
	\union{\oball{}{x}{1}}{x \in X}$, see Allard
	\cite[6.4]{MR0307015}, such that $\card X \leq Q$, and, as $i
	\to \infty$,
	\begin{gather*}
		d_i \to d, \quad t_i \to t, \quad \zeta_i \to \zeta,
		\quad \xi_i \to \xi, \\
		X_i \to X \quad \text{in Hausdorff distance}, \\
		\pairing{\mu_i}{\varphi} \to \pairing{\mu}{\varphi}
		\quad \text{for $\varphi \in \ccspace{\rel^\adim }$
		with $\spt \varphi \subset U$},
	\end{gather*}
	and additionally
	\begin{gather*}
		T_x \mu = T \quad \text{for $\mu$ almost all $x \in
		U$}.
	\end{gather*}
	Clearly,
	\begin{gather*}
		d \leq M t, \quad d + t \leq 1, \quad 0 < t \leq 1,
		\\
		\card \project{T} ( X ) = 1, \quad \zeta \in \cball{\adim
		}{0}{d} \cap T, \quad \xi \in \spt \mu,
	\end{gather*}
	and one would readily verify
	\begin{gather*}
		\mu \big ( \bunion{ \{ y \in \oball{}{x+\zeta}{t}
		\with | \project{T} ( y-x ) | > s_0 |y-x| \}}{x \in X}
		\big ) \leq ( Q + 1 - \delta_2 ) \unitmeasure{\vdim }
		t^\vdim .
	\end{gather*}
	
	It would hold
	\begin{gather*}
		0 < \dist (\xi, X) / t \leq \lambda / 2, \\
		| \project{T} ( \xi-x ) | \leq s | \xi - x | \quad
		\text{for $x \in X$}, \quad \perpproject{T} ( \xi )
		\notin \perpproject{T} ( X),
	\end{gather*}
	hence there would exist $x \in X$ with $|\xi-x| \leq \lambda t
	/ 2$ implying $0 < | \perpproject{T} ( \xi-x ) | < t$.
	Finally, one would obtain as in the last paragraph of the
	proof of part
	\eqref{item:inverse_multilayer_monotonicity:upper_bound} with
	$y$ replaced by $\xi$ that
	\begin{gather*}
		\lambda t \leq \big | \perpproject{T} ( \xi-x ) \big |
	\end{gather*}
	which is incompatible with
	\begin{gather*}
		\big | \perpproject{T} ( \xi - x ) \big | \leq |\xi-x|
		\leq \lambda t / 2
	\end{gather*}
	because $\lambda t > 0$.
\end{proof}
\begin{definition} \label{def:q_valued_plane}
	Suppose $m,n,Q \in \nat$, and $T \in \grass{\adim }{\vdim }$.
	
	Then $P$ is called a \emph{$Q$ valued plane parallel to
	$T$} if and only if for some $S \in
	\qspace_Q ( T^\perp )$
	\begin{gather*}
		P = \big ( \density^0 ( \| S \|, \cdot ) \circ \perpproject{T}
		\big ) \mathcal{H}^\vdim .
	\end{gather*}
	$S$ is uniquely determined by $P$. For any two $Q$ valued planes $P_1$
	and $P_2$ parallel to $T$ associated to $S_1, S_2 \in \qspace_Q (
	T^\perp )$ one defines
	\begin{gather*}
		\mathcal{G} (P_1,P_2) := \mathcal{G} ( S_1, S_2 ).
	\end{gather*}
	In particular, if $S = \sum_{i=1}^Q \Lbrack z_i \Rbrack$ for some
	$z_1, \ldots, z_Q \in T^\perp$, then
	\begin{gather*}
		\| S \| = \sum_{i=1}^Q \delta_{z_i}, \quad P = \sum_{i=1}^Q
		\mathcal{H}^\vdim  \restrict \{ x \in \rel^\adim \with
		\perpproject{T} (x) = z_i \}
	\end{gather*}
	where $\delta_x$ denotes the Dirac measure at the point $x$.
\end{definition}
\begin{miniremark} \label{miniremark:q_valued_lip}
	If $0 < d < \infty$, $\codim \in\nat$, $S,T \in \qspace_Q (
	\rel^{\codim } )$, and for each subset $X$ of $\spt S$
	\begin{gather*}
		{\textstyle\sum_{x \in X}} \density^0 ( \lVert S \rVert, x ) +
		{\textstyle\sum_{y \in Y}} \density^0 ( \lVert T \rVert, y )
		\leq Q
	\end{gather*}
	where $Y = ( \spt T ) \without \union{\oball{}{x}{d}}{x \in X}$, then
	\begin{gather*}
		\mathcal{G} (S,T) < Q^{1/2} d;
	\end{gather*}
	in fact if $S = \sum_{i=1}^Q \mathbb{\Lbrack} x_i \mathbb{\Rbrack}$,
	$T = \sum_{i=1}^Q \mathbb{\Lbrack} y_i \mathbb{\Rbrack}$ for some
	$x_1, \ldots, x_Q, y_1, \ldots, y_Q \in \rel^{\codim }$ one may verify
	the existence of a permutation $\sigma$ of $\{ 1, \ldots, Q \}$ such
	that $|x_i - y_{\sigma(i)}| < d$ for $i \in \{ 1, \ldots, Q \}$ by
	Hall's theorem on perfect matches, see e.g. \cite[Theorem
	1.1.3]{MR859549}.
\end{miniremark}
\begin{miniremark} \label{miniremark:tilt}
	\emph{If $\codim, \vdim \in \nat$, and $S, T \in
	\grass{\adim}{\vdim}$, then
	\begin{gather*}
		1 - \| \Lambda_\vdim ( \project{T} | S ) \|^2 \leq
		\vdim \| \project{T} - \project{S} \|^2;
	\end{gather*}}
	in fact, verifying $\| \Lambda_\vdim f - \Lambda_\vdim g \| \leq \vdim
	\| f-g \| \sup \{ \| f \|, \| g \| \}^{\vdim-1}$ whenever $f,g \in
	\Hom ( \rel^\vdim, \rel^\vdim )$ and $\vdim > 1$, the assertion
	follows from
	\begin{gather*}
		1 = \| \Lambda_\vdim \id{S} \|, \quad \| \Lambda_\vdim (
		\project{T} | S ) \|^2 = \| \Lambda_\vdim ((\project{T}|S)^\ast
		\circ ( \project{T} | S) ) \|, \\
		\id{S} - ( \project{T} | S )^\ast \circ \project{T}|S = \id{S}
		- ( \project{S} \circ \project{T} ) \circ ( \project{T} | S )
		= ( \project{S} \circ \perpproject{T} ) \circ (
		\perpproject{T} \circ \project{S} ) | S, \\
		\| \id{S} - ( \project{T}|S )^\ast \circ ( \project{T} | S )
		\| \leq \| \project{S} \circ \perpproject{T} \| \|
		\perpproject{T} \circ \project{S} \| \leq \| \project{T} -
		\project{S} \|^2.
	\end{gather*}
\end{miniremark}
\begin{miniremark} \label{miniremark:notation_approximation}
	In studying approximations of integral varifolds the following
	notation will be convenient. Suppose $m, n \in \nat$, and $T \in
	\grass{\adim }{\vdim }$. Then there exist orthogonal projections $\pi
	: \rel^\adim  \to \rel^\vdim $, $\sigma : \rel^\adim  \to \rel^{\codim
	}$ such that $T = \im \pi^\ast$ and $\pi \circ \sigma^\ast = 0$, hence
	\begin{gather*}
		\project{T} = \pi^\ast \circ \pi, \quad \perpproject{T} =
		\sigma^\ast \circ \sigma, \quad \id{\rel^\adim } = \pi^\ast
		\circ \pi + \sigma^\ast \circ \sigma.
	\end{gather*}
	Whenever $a \in \rel^\adim $, $0 < r < \infty$, $0 < h \leq \infty$
	the \emph{closed cylinder}
	$\cylinder{a}{r}{h}{T}$ is defined by
	\begin{gather*}
		\begin{aligned}
			\cylinder{a}{r}{h}{T} & = \{ x \in \rel^\adim  \with
			\text{$|\project{T}(x-a)|\leq r$ and
			$|\perpproject{T}(x-a)| \leq h$} \} \\
			& = \{ x \in \rel^\adim  \with \text{$|\pi(x-a)|\leq
			r$ and $|\sigma(x-a)| \leq h$} \}.
		\end{aligned}
	\end{gather*}
	This definition extends Allard's definition in \cite[8.10]{MR0307015}
	where $h=\infty$.
\end{miniremark}
\begin{lemma} [Approximation by $Q$ valued functions] \label{lemma:lipschitz_approximation_2}
	Suppose $m, n, Q \in \nat$, $0 < L < \infty$, $1 \leq M < \infty$, and
	$0 < \delta_i \leq 1$ for $i \in \{1,2,3,4,5\}$ with $\delta_5 \leq (
	2 \isoperimetric{\vdim} \vdim )^{-\vdim} / \unitmeasure{\vdim}$.
	
	Then there exists a positive, finite number $\varepsilon$ with the
	following property.
	
	If $a$, $r$, $h$, $T$, $\pi$, and $\sigma$ are as in
	\ref{miniremark:notation_approximation}, $h > 2 \delta_4 r$,
	\begin{gather*}
		U = \{ x \in \rel^\adim  \with \dist (x, \cylinder{a}{r}{h}{T})
		< 2r \},
	\end{gather*}
	$\mu$ is an integral $\vdim $ varifold in $U$ with locally bounded
	first variation,
	\begin{gather*}
		( Q - 1 + \delta_1 ) \unitmeasure{\vdim } r^\vdim  \leq \mu (
		\cylinder{a}{r}{h}{T} ) \leq ( Q + 1 - \delta_2 )
		\unitmeasure{\vdim } r^\vdim , \\
		\mu ( \cylinder{a}{r}{h+\delta_4 r}{T} \without \cylinder
		{a}{r}{h-2\delta_4 r}{T}) \leq ( 1 - \delta_3 )
		\unitmeasure{\vdim } r^\vdim , \\
		\mu ( U ) \leq M \unitmeasure{\vdim } r^\vdim ,
	\end{gather*}
	$0 < \varepsilon_1 \leq \varepsilon$, $B$ denotes the set of all $x
	\in \cylinder{a}{r}{h}{T}$ with $\density^{\ast \vdim } ( \mu, x) > 0$
	such that
	\begin{gather*}
		\text{either} \qquad \measureball{\| \delta \mu
		\|}{\cball{}{x}{\varrho}} > \varepsilon_1 \, \mu (
		\cball{}{x}{\varrho} )^{1-1/\vdim } \quad \text{for some $0 <
		\varrho < 2 r$}, \\
		\text{or} \qquad {\textstyle\int_{\cball{}{x}{\varrho}}} |
		\eqproject{T_\xi\mu} - \project{T} | \ud \mu (\xi) >
		\varepsilon_1 \, \measureball{\mu}{\cball{}{x}{\varrho}} \quad
		\text{for some $0 < \varrho < 2 r$},
	\end{gather*}
	and $H$ denotes the set of all $x \in \cylinder{a}{r}{h}{T}$ such that
	\begin{gather*}
		\measureball{\| \delta \mu \|}{\oball{}{x}{2r}} \leq
		\varepsilon \, \mu ( \oball{}{x}{2r} )^{1-1/\vdim }, \quad
		{\textstyle\int_{\oball{}{x}{2r}}} | \eqproject{T_\xi\mu} -
		\project{T} | \ud \mu ( \xi ) \leq \varepsilon \,
		\measureball{\mu}{\oball{}{x}{2r}}, \\
		\measureball{\mu}{\cball{}{x}{\varrho}} \geq \delta_5
		\unitmeasure{\vdim} \varrho^\vdim \quad \text{for $0 < \varrho
		< 2r $},
	\end{gather*}
	then there exist an $\mathcal{L}^\vdim $ measurable subset $Y$ of
	$\rel^\vdim $ and a function $f : Y \to \qspace_Q ( \rel^{\codim } )$
	with the following seven properties:
	\begin{enumerate}
		\item \label{item:lipschitz_approximation_2:lip} $Y \subset
		\cball{}{\pi(a)}{r}$ and $f$ is Lipschitzian with $\Lip f \leq
		L$.
		\item \label{item:lipschitz_approximation_2:def} Defining
		$A = \cylinder{a}{r}{h}{T} \without B$ and $A(y) = \{ x \in A
		\with \pi(x) = y \}$ for $y \in \rel^\vdim $, the sets $A$ and
		$B$ are Borel sets and there holds
		\begin{gather*}
			\sigma ( A \cap \spt \mu ) \subset
			\cball{}{\sigma(a)}{h-\delta_4r}, \quad \spt f (y)
			\subset \sigma ( A (y) ), \\
			\| f ( y ) \| = \sigma \big ( \density^\vdim  ( \mu,
			\cdot ) \mathcal{H}^0 \restrict A(y) \big)
		\end{gather*}
		whenever $y \in Y$.
		\item \label{item:lipschitz_approximation_2:estimate} Defining
		the sets
		\begin{gather*}
			C = \cball{}{\pi(a)}{r} \without ( Y \without \pi
			(B)), \quad D = \cylinder{a}{r}{h}{T} \cap \pi^{-1} ( C
			),
		\end{gather*}
		there holds
		\begin{gather*}
			\mathcal{L}^\vdim  ( C ) + \mu ( D ) \leq
			\Gamma_{\eqref{item:lipschitz_approximation_2:estimate}}
			\, \mu ( B ).
		\end{gather*}
		with $\Gamma_{\eqref{item:lipschitz_approximation_2:estimate}}
		= \max \{ 3 + 2 Q + ( 12 Q + 6 ) 5^\vdim , 4 ( Q+2 ) /
		\delta_1 \} $.
		\item \label{item:lipschitz_approximation_2:lip_related} If
		$x_1 \in H$, then
		\begin{gather*}
			| \sigma ( x_1 - a ) | \leq h - \delta_4 r
		\end{gather*}
		and for $y \in Y \cap
		\cball{}{\pi(x_1)}{\lambda_{\eqref{item:lipschitz_approximation_2:lip_related}}r}$
		there exists $x_2 \in A (y)$ with $\density^\vdim  ( \mu, x_2)
		\in \nat$ and
		\begin{gather*}
			\big | \perpproject{T} ( x_2 - x_1 ) \big | \leq L \,
			| \project{T} ( x_2 - x_1 ) |,
		\end{gather*}
		where $0 <
		\lambda_{\eqref{item:lipschitz_approximation_2:lip_related}} <
		1$ depends only on $\vdim $, $\delta_2$, and $\delta_4$.
		Moreover, $A \cap \spt \mu \subset H$ and
		\begin{gather*}
			( \pi \Join \sigma ) \big ( H \cap \pi^{-1} ( Y )
			\big) = \graph_Q f.
		\end{gather*}
		\item \label{item:lipschitz_approximation_2:boundary} The set
		$\Clos Y \without Y$ has measure $0$ with respect to
		$\mathcal{L}^\vdim $ and $\pushmeasure{\pi}{( \mu \restrict H
		)}$.
		\item \label{item:lipschitz_approximation_2:height_estimate}
		If $\mathcal{L}^\vdim  ( \cball{}{\pi(a)}{r} \without Y ) \leq
		(1/2) \unitmeasure{\vdim } (
		\lambda_{\eqref{item:lipschitz_approximation_2:lip_related}} r
		/ 6)^\vdim $, $1 \leq q < \infty$, $P = ( \density^0 ( \| S
		\|, \cdot ) \circ \sigma ) \mathcal{H}^\vdim $ is the $Q$
		valued plane associated to $S \in \qspace_Q ( \rel^{\codim })$
		via $\sigma$, and $g : Y \to \rel$ is defined by $g(y) =
		\mathcal{G} ( f (y), S )$ for $y \in Y$, then
		\begin{multline*}
			\qquad \quad \| \dist ( \cdot , \spt P ) \|_{\Lp{q} ( \mu \restrict
			H)} \\
			\leq (12)^{\vdim +1} Q  \big ( \| g \|_{\Lp{q}
			(\mathcal{L}^\vdim  \restrict Y)} +
			\Gamma_{\eqref{item:lipschitz_approximation_2:height_estimate}}
			\mathcal{L}^\vdim  ( \cball{}{\pi(a)}{r} \without
			Y)^{1/q+1/\vdim } \big ),
		\end{multline*}
		where
		$\Gamma_{\eqref{item:lipschitz_approximation_2:height_estimate}}$
		is a positive, finite number depending only on $\vdim$, and
		\begin{gather*}
			\sup \{ \dist ( x , \spt P ) \with x \in
			H \}
			\leq \| g \|_{\Lp{\infty} ( \mathcal{L}^\vdim
			\restrict Y ) } + 2 \big ( \mathcal{L}^\vdim  (
			\cball{}{\pi(a)}{r} \without Y) / \unitmeasure{\vdim }
			\big )^{1/\vdim }.
		\end{gather*}
		\item \label{item:lipschitz_approximation_2:misc} For
		$\mathcal{L}^\vdim $ almost all $y \in Y$ the following is
		true:
		\begin{enumerate}
			\item
			\label{item:item:lipschitz_approximation_2:misc:a} $f$
			is approximately strongly affinely approximable at
			$y$.
			\item
			\label{item:item:lipschitz_approximation_2:misc:b}
			Whenever $x \in H$ with $\pi (x) = y$
			\begin{gather*}
				( \pi \Join \sigma ) ( T_x \mu ) = \Tan \big (
				\graph_Q \ap A f ( y ) , ( y, \sigma (x) )
				\big )
			\end{gather*}
			where $\Tan (S,a)$ denotes the classical tangent cone
			of $S$ at $a$ in the sense of
			\cite[3.1.21]{MR41:1976}.
			\item
			\label{item:item:lipschitz_approximation_2:misc:apf0}
			$\| \eqproject{T_x\mu} - \project{T} \| \leq \| \ap A
			f (y) \|$ for $x \in H$ with $\pi (x) = y$.
			\item
			\label{item:item:lipschitz_approximation_2:misc:apf}
			$\| \ap A f ( y ) \|^2 \leq Q ( 1 + ( \Lip f )^2 )
			\max \{ \| \eqproject{T_x\mu} - \project{T} \|^2 \with
			x \in \pi^{-1} ( \{ y \} ) \cap H \}$.
		\end{enumerate}
	\end{enumerate}
\end{lemma}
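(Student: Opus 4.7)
The plan is to follow Brakke's pattern \cite[5.4]{MR485012} with the refinements advertised in the introduction of this section, proceeding in four stages.

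\textbf{Stage 1 (bad and good sets).} Define $B$ as in the statement and put $A = \cylinder{a}{r}{h}{T} \without B$. The key observation is that every $x \in A$ with $\density^{\ast \vdim}(\mu, x) > 0$ automatically satisfies the smallness hypotheses of Lemmas \ref{lemma:multilayer_monotonicity_offset} and \ref{lemma:inverse_multilayer_monotonicity} at every scale $0 < \varrho < 2r$ with constant $\varepsilon_1 \leq \varepsilon$; consequently $A \cap \spt \mu \subset H$. A Besicovitch cover of $B$ by balls witnessing the failure of one of the two excess conditions, combined with $\measureball{\mu}{U} \leq M \unitmeasure{\vdim} r^\vdim$, yields Borel regularity of $B$ (via upper semicontinuity of density) together with a control of $\mu(B)$ by the total tilt and first variation on $U$.

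\textbf{Stage 2 (domain $Y$ and function $f$).} Let $\Theta(y) = \sum_{x \in A(y) \cap \spt \mu} \density^\vdim(\mu, x)$ and take $Y$ to be the set of $y \in \cball{}{\pi(a)}{r} \without \pi(B)$ with $\Theta(y) = Q$; for $y \in Y$ set $f(y) = \sum_{x \in A(y) \cap \spt \mu} \density^\vdim(\mu, x) \Lbrack \sigma(x) \Rbrack$. The lower bound $\Theta(y) \geq Q$ on most fibres comes from Lemma \ref{lemma:multilayer_monotonicity_offset} applied with $X = A(y) \cap \spt \mu$, combined with the cylinder lower bound $\measureball{\mu}{\cylinder{a}{r}{h}{T}} \geq (Q-1+\delta_1) \unitmeasure{\vdim} r^\vdim$ and the slab bound involving $\delta_3$, which prevents mass from escaping through the top or bottom of the cylinder. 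The matching upper bound $\Theta(y) \leq Q$ comes from Lemma \ref{lemma:inverse_multilayer_monotonicity}\,(1) combined with the cylinder upper bound $\measureball{\mu}{\cylinder{a}{r}{h}{T}} \leq (Q+1-\delta_2) \unitmeasure{\vdim} r^\vdim$. Conclusion (2) is then a bookkeeping statement.

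\textbf{Stage 3 (Lipschitz property).} For $y_1, y_2 \in Y$ with $|y_1 - y_2|$ small and $x_1 \in A(y_1) \cap \spt \mu$, apply Lemma \ref{lemma:inverse_multilayer_monotonicity}\,(2) with $X = A(y_2) \cap \spt \mu$ and an auxiliary point $\xi \in \spt \mu$ above $y_2$ close to $x_1$ to produce $x_2 \in X$ with $|\perpproject{T}(x_2 - x_1)| \leq L|\project{T}(x_2 - x_1)|$, after initially reducing $L$ to absorb a geometric multiplier. Iterating this fibre-wise matching and invoking Miniremark \ref{miniremark:q_valued_lip} to compare $\qspace_Q$ distances yields $\mathcal{G}(f(y_1), f(y_2)) \leq L|y_1 - y_2|$, giving conclusion (1). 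The one-sided Lipschitz statement in conclusion (4) is just the formulation of Lemma \ref{lemma:inverse_multilayer_monotonicity}\,(2) for the preliminary graphical set $H$; the identity $(\pi \Join \sigma)(H \cap \pi^{-1}(Y)) = \graph_Q f$ then follows because both sides collect the same $Q$ counted points on each fibre.

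\textbf{Stage 4 (remaining conclusions and main obstacle).} For conclusion (3), a Besicovitch cover of $C \without \pi(B)$ by $\pi$-projections of balls centred at points of $B$ converts, via the lower density floor $\delta_5$ intrinsic to $H$, mass bounds on $\mu(B)$ into $\mathcal{L}^\vdim$ bounds on $C$; the threshold $\delta_5 \leq (2 \isoperimetric{\vdim} \vdim)^{-\vdim}/\unitmeasure{\vdim}$ is precisely what keeps the numerical constant finite. Conclusion (5) follows by applying (3) to shrinking neighbourhoods of $\Clos Y$. For the height estimate (6), split $H$ over $\pi^{-1}(Y)$ and $\pi^{-1}(\cball{}{\pi(a)}{r} \without Y)$: on the first piece the graph identity gives $\dist(x, \spt P) \leq \mathcal{G}(f(\pi(x)), S)$ pointwise; on the second piece, a Vitali-type cover of $\cball{}{\pi(a)}{r} \without Y$ by balls of radius comparable to $\mathcal{L}^\vdim(\cball{}{\pi(a)}{r} \without Y)^{1/\vdim}$, combined with the one-sided Lipschitz bound from (4), allows replacement of the trivial cylinder height $h$ by this covering radius, producing the decisive $1/q + 1/\vdim$ exponent. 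Conclusion (7) is read off from Theorem \ref{thm:rectifiability_Q_valued_graph}, Corollary \ref{thm:Q_valued_stepanoff}, and Miniremark \ref{miniremark:tilt}, once the Lipschitz structure of $f$ is in place. The principal obstacle is stage 3: promoting the one-sided Lipschitz cone from Lemma \ref{lemma:inverse_multilayer_monotonicity}\,(2) to a genuine matching of $Q$-tuples requires that both the upper density cap from part (1) of that lemma and the lower density floor from Lemma \ref{lemma:multilayer_monotonicity_offset} be available uniformly at every scale $\varrho \in (0, 2r)$; the secondary obstacle is the $1/\vdim$-exponent improvement in (6), which is what ultimately unlocks the Sobolev--Poincar\'e exponent $q^{\ast}$ in Theorem \ref{thm:sobolev_poincare}.
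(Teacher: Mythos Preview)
Your four-stage architecture matches the paper, but several of the mechanisms you invoke are misattributed and would not close the argument as written.

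\textbf{Stage 2, upper bound on $\Theta(y)$.} You cite Lemma \ref{lemma:inverse_multilayer_monotonicity}\,(1). That lemma bounds $\mu\big(\bigcup_{x\in X}\cball{}{x}{\tau}\big)$ from above; it does not directly bound $\sum_{x\in X}\density^\vdim(\mu,x)$. The paper obtains $\sum_{x\in A(y)}\density^\vdim(\mu,x)\leq Q$ from Lemma \ref{lemma:multilayer_monotonicity_offset} applied with $Q$ replaced by $Q+1$: if the sum were $\geq Q+1$, multilayer monotonicity with offset would force $\mu(\cylinder{a}{r}{h}{T})>(Q+1-\delta_2)\unitmeasure{\vdim}r^\vdim$, contradicting the hypothesis. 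This same density cap is what drives Stage~3.

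\textbf{Stage 3, Lipschitz bound.} Lemma \ref{lemma:inverse_multilayer_monotonicity}\,(2) produces, for a single point $\xi$ near $X$, \emph{one} point $x\in X$ with $|\project{T}(\xi-x)|\geq s|\xi-x|$. Iterating this does not yield a matching of two $Q$-tuples: nothing prevents several points of $A(y_1)$ from being sent to the same point of $A(y_2)$. The paper instead argues as follows. For $y_1,y_2\in Y$, take any subset $X_0\subset A(y_1)\cap\spt\mu$ and let $Y_0$ be those points of $A(y_2)\cap\spt\mu$ lying outside the cone $\{|\perpproject{T}(\cdot-x)|<(s^{-2}-1)^{1/2}|\project{T}(\cdot-x)|\}$ around each $x\in X_0$. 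Then $X_0\cup Y_0\subset A\cap\spt\mu$ satisfies the hypothesis of the density cap just proved, so its total density is $\leq Q$. This is exactly the hypothesis of \ref{miniremark:q_valued_lip} (Hall's theorem), giving $\mathcal{G}(f(y_1),f(y_2))\leq Q^{1/2}(s^{-2}-1)^{1/2}|y_1-y_2|$. Lemma \ref{lemma:inverse_multilayer_monotonicity}\,(2) is used only later, for the one-sided Lipschitz statement in conclusion~(4) relating points of $H$ to points of $A(y)$.

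\textbf{Stage 4, conclusion (3).} Your description --- Besicovitch balls centred at points of $B$, controlled by the $\delta_5$ floor --- does not correspond to the actual argument and would not work: the set to be estimated is $Z=\{y:\Theta(y)\leq Q-1\}$, which need not lie under $B$. The paper first proves a coarea inequality $(1-\vdim\varepsilon^2)\mu(\cylinder{a}{r}{h}{T}\cap\pi^{-1}(W))\leq\mu(B\cap\pi^{-1}(W))+Q\mathcal{L}^\vdim(Y\cap W)+(Q-1)\mathcal{L}^\vdim(Z\cap W)$ using \ref{miniremark:tilt}. Then, at a density point $z$ of $Z$, one slides a ball $\cball{}{\zeta}{t}\ni z$ until $\mathcal{L}^\vdim(Y\cap\cball{}{\zeta}{t})$ and $\mathcal{L}^\vdim(Z\cap\cball{}{\zeta}{t})$ are comparable, picks $y\in Y\cap\cball{}{\zeta}{t}$, and applies Lemma \ref{lemma:multilayer_monotonicity_offset} with $X=A(y)\cap\spt\mu$ to get $\mu(\cylinder{a}{r}{h}{T}\cap\pi^{-1}(\cball{}{\zeta}{t}))\geq(Q-1/4)\unitmeasure{\vdim}t^\vdim$; feeding this back into the coarea inequality forces $\mathcal{L}^\vdim(\cball{}{\zeta}{t})\lesssim\mu(B\cap\pi^{-1}(\cball{}{\zeta}{t}))$, and Vitali sums. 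The parameter $\delta_5$ plays no role here; it enters only in the proof of (4), via \cite[2.5]{snulmenn.isoperimetric} and Lemma \ref{lemma:inverse_multilayer_monotonicity}\,(2).

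Your sketches of (5), (6), and (7) are closer to the mark, though for (7)(c)(d) the paper uses Allard \cite[8.9\,(5)]{MR0307015} rather than \ref{miniremark:tilt}.
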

\begin{proof} [Choice of constants]
	One can assume $3L \leq \delta_4$.
	
	Choose $0 < s_0 < 1$ close to $1$ such that $2 ( s_0^{-2} - 1 )^{1/2}
	\leq \delta_4$, define
	\begin{gather*}
		\lambda =
		\lambda_{\ref{lemma:inverse_multilayer_monotonicity}} ( \vdim
		, \delta_2, s_0 ) / 4,
	\end{gather*}
	choose $s_0 \leq s < 1$ close to $1$ satisfying
	\begin{gather*}
		( s^{-2} - 1 )^{1/2} \leq \lambda / 4, \quad Q^{1/2} ( s^{-2}
		- 1 )^{1/2} \leq L,
	\end{gather*}
	and define $\varepsilon > 0$ so small that
	\begin{gather*}
		\varepsilon \leq ( 2 \isoperimetric{\vdim } )^{-1}, \quad Q -
		1 + \delta_1 / 2 \leq ( 1 - \vdim  \varepsilon^2 ) ( Q - 1 +
		\delta_1 ), \\
		Q - 1/2 \leq ( 1 - \vdim  \varepsilon^2 ) ( Q - 1/4 ), \quad 1
		- \vdim \varepsilon^2 \geq 1/2,
	\end{gather*}
	and not larger than the minimum of the following eight numbers
	\begin{gather*}
		\varepsilon_{\ref{lemma:lower_density_bound}} (
		m,n,1-\delta_3/2 ), \quad
		\varepsilon_{\ref{lemma:multilayer_monotonicity_offset}} (
		m,n, 1, M, \delta_3 / 2, s ), \\
		\varepsilon_{\ref{lemma:multilayer_monotonicity_offset}} ( m,
		n, Q + 1, M , \delta_2 / 2, s ), \quad
		\varepsilon_{\ref{lemma:multilayer_monotonicity_offset}} ( m,
		n, Q, M, 1/4, s ), \\
		\varepsilon_{\ref{lemma:monotonicity_offset}} ( m, n, \min \{
		\delta_2 / 3, \delta_3 / 2 , \delta_5 \}, s, \max \{ M, 2 \} ), \quad
		\varepsilon_{\ref{lemma:multilayer_monotonicity_offset}} ( m,
		n, Q, M, \delta_2 / 3, s ), \\
		\varepsilon_{\ref{lemma:inverse_multilayer_monotonicity}} ( m,
		n, Q, \delta_5, \delta_2, s, s_0, M ), \quad
		\varepsilon_{\ref{lemma:inverse_multilayer_monotonicity}} ( m,
		n, Q, 1, \delta_2, s, s_0, M ).
	\end{gather*}
	
	Clearly, $\varepsilon_1$ satisfies the same inequalities as
	$\varepsilon$ and one can assume $a = 0$, and $r = 1$.
\end{proof}
\begin{proof}
	[Proof of \eqref{item:lipschitz_approximation_2:lip}
	and \eqref{item:lipschitz_approximation_2:def}] Since
	$\density^{\ast \vdim } ( \mu, \cdot )$ is a Borel function,
	one may verify that $A$ and $B$ are Borel sets (cp.
	\cite[2.9.14]{MR41:1976}).
	
	First, the following \emph{basic properties of $A$} are
	proved: For $x \in A \cap \spt \mu$
	\begin{gather*}
		\density_\ast^\vdim  ( \mu, x ) \geq \delta_3 / 2, \\
		\{ \xi \in \pi^{-1} ( \cball{\vdim }{0}{1} ) \with |
		\project{T} ( \xi - x ) | > s | \xi - x | \} \subset
		\sigma^{-1} ( \cball{}{\sigma(x)}{\min \{ \lambda / 2
		, \delta_4 \}}), \\
		\sigma ( A \cap \spt \mu ) \subset \cball{\codim
		}{0}{h-\delta_4}.
	\end{gather*}
	The first is implied by Lemma \ref{lemma:lower_density_bound}. The
	second is a consequence of the fact that for $\xi \in \pi^{-1}
	( \cball{\vdim }{0}{1})$ with $|\project{T} (\xi-x) |
	> s |\xi-x|$
	\begin{gather*}
		|\sigma(\xi)-\sigma(x)| < ( s^{-2} - 1 )^{1/2} | \pi
		(\xi) - \pi (x) | \leq 2 ( s^{-2} - 1 )^{1/2} \leq
		\min \{ \lambda / 2, \delta_4 \}.
	\end{gather*}
	To prove the third, note that
	Lemma \ref{lemma:multilayer_monotonicity_offset} applied with
	\begin{gather*}
		\text{$Q$, $\delta$, $X$, $d$, $r$, $t$, and $f$
		replaced by} \\
		\text{$1$, $\delta_3/2$, $\{x\}$, $1$, $2$, $1$, and
		$\perpproject{T}| \{x\}$}
	\end{gather*}
	yields
	\begin{gather*}
		\mu \big ( \pi^{-1} ( \cball{\vdim }{0}{1} ) \cap
		\sigma^{-1} ( \cball{}{\sigma(x)}{\delta_4}) \big )
		\geq ( 1 - \delta_3 / 2) \unitmeasure{\vdim },
	\end{gather*}
	so that $h-\delta_4 < |\sigma(x)| \leq h$ would be
	incompatible with
	\begin{gather*}
		\mu ( \cylinder{0}{1}{h+\delta_4}{T} \without
		\cylinder{0}{1}{h-2\delta_4}{T}) \leq ( 1 - \delta_3 )
		\unitmeasure{\vdim }.
	\end{gather*}
	
	Next, it will be shown \emph{if $X \subset A \cap \spt \mu$,
	$\density^\vdim  (\mu,x) \in \nat_0$ for $x \in X$,
	\begin{gather*}
		s^{-1} | \project{T} (x_2-x_1) | \leq |x_2-x_1| \quad
		\text{whenever $x_1,x_2 \in X$},
	\end{gather*}
	then $\sum_{x \in X} \density^\vdim  ( \mu, x ) \leq Q$}.
	Using the basic properties of $A$ to verify
	\begin{gather*}
		\begin{aligned}
			\{ \xi \in \oball{}{\perpproject{T}(x)}{1}
			\with | \project{T} ( \xi - x ) | > s | \xi -
			x | \} & \subset \pi^{-1} ( \cball{\vdim
			}{0}{1} ) \cap \sigma^{-1} (
			\cball{}{\sigma(x)}{\delta_4} ) \\
			& \subset \cylinder{0}{1}{h}{T}
		\end{aligned}
	\end{gather*}
	there holds
	\begin{align*}
		\mu \big ( \bunion{ \{ \xi \in
		\oball{}{\perpproject{T}(x)}{1} \with | \project{T} (
		\xi - x) | > s | \xi - x | \}}{x \in X} \big ) & \leq
		\mu ( \cylinder{0}{1}{h}{T} ) \\
		& \leq ( Q + 1 - \delta_2 ) \unitmeasure{\vdim }
	\end{align*}
	and Lemma \ref{lemma:multilayer_monotonicity_offset} applied with
	\begin{gather*}
		\text{$Q$, $\delta$, $d$, $r$, $t$, and $f$ replaced
		by} \\
		\text{$Q+1$, $\delta_2/2$, $1$, $2$, $1$, and
		$\perpproject{T} | X$}
	\end{gather*}
	yields
	\begin{gather*}
		{\textstyle\sum_{x \in X}} \density^\vdim  (\mu,x) < Q
		+ \delta_2/2,
	\end{gather*}
	hence $\sum_{x \in X} \density^\vdim  (\mu,x) \leq Q$. In
	particular, $\sum_{x \in A(y)} \density^\vdim  ( \mu, x ) \leq
	Q$ whenever $y \in \cball{\vdim }{0}{1}$ and $\density^\vdim
	( \mu, x ) \in \nat_0$ for each $x \in A(y)$.
	
	Let $Y$ be the set of all $y \in \cball{\vdim }{0}{1}$ such
	that
	\begin{gather*}
		{\textstyle\sum_{x \in A(y)}} \density^\vdim  ( \mu, x
		) = Q \quad \text{and} \quad \text{$\density^\vdim  (
		\mu, x ) \in \nat_0$ for $x \in A(y)$},
	\end{gather*}
	$Z$ be the set of all $z \in \cball{\vdim }{0}{1}$ such that
	\begin{gather*}
		{\textstyle\sum_{x \in A(z)}} \density^\vdim  ( \mu, x
		) \leq Q - 1 \quad \text{and} \quad
		\text{$\density^\vdim ( \mu, x ) \in \nat_0$ for $x
		\in A(z)$},
	\end{gather*}
	and $N = \cball{\vdim }{0}{1} \without ( Y \cup Z )$. Clearly,
	$Y \cap Z = \emptyset$. Note by the concluding remark of the
	preceding paragraph $\mathcal{L}^\vdim  ( N ) = 0$ because
	$\density^\vdim  ( \mu, x ) \in \nat_0$ for $\mathcal{H}^\vdim
	$ almost all $x \in U$.  Since $\density^\vdim  (\mu,\cdot)$
	is a Borel function whose domain is a Borel set and $A$ is a
	Borel set, $Y$ and $Z$ are $\mathcal{L}^\vdim $ measurable by
	\cite[3.2.22\,(3)]{MR41:1976}. Let $f :Y \to \qspace_Q (
	\rel^{\codim } )$ be defined by
	\begin{gather*}
		f(y) = \sigma_\# \big ( {\textstyle\sum_{x \in A(y)}}
		\density^\vdim  ( \mu, x ) \mathbb{\Lbrack} x
		\mathbb{\Rbrack} \big ) \quad \text{whenever $y \in
		Y$}.
	\end{gather*}
	One infers from the assertion of the preceding paragraph and
	\ref{miniremark:q_valued_lip}
	\begin{gather*}
		\mathcal{G} (f(y_2),f(y_1)) \leq Q^{1/2} ( s^{-2} -1
		)^{1/2} |y_2-y_1| \quad \text{for $y_1, y_2 \in Y$}.
	\end{gather*}
	\eqref{item:lipschitz_approximation_2:lip} and
	\eqref{item:lipschitz_approximation_2:def} are now evident.
\end{proof}
\begin{proof} [Proof of \eqref{item:lipschitz_approximation_2:estimate}]
	For the estimate some preparations are needed. Let $\nu$
	denote the Radon measure defined by the requirement
	\begin{gather*}
		\nu (X) = {\textstyle\int_X} J^\mu \project{T} \ud \mu
		\quad \text{for every Borel subset $X$ of $U$}
	\end{gather*}
	where $J^\mu$ denotes the Jacobian with respect $\mu$. Note by
	\cite[2.9.8]{MR41:1976}
	\begin{gather*}
		| \eqproject{T_x\mu} - \project{T} | \leq \varepsilon
		\quad \text{for $\mu$ almost all $x \in A$},
	\end{gather*}
	hence $1 - J^\mu \project{T} ( x ) \leq 1 - ( J^\mu
	\project{T} ) (x)^2 \leq \vdim \varepsilon^2$ by
	\ref{miniremark:tilt}. Therefore
	\begin{gather*}
		( 1 - \vdim  \varepsilon^2 ) \, \mu \restrict A \leq
		\nu \restrict A.
	\end{gather*}
	This implies the \emph{coarea estimate}
	\begin{gather*}
		( 1 - \vdim  \varepsilon^2 ) \, \mu \bigl ( \cylinder
		{0}{1}{h}{T} \cap \pi^{-1} (W) \bigr ) \\
		\leq \mu \bigl ( B \cap \pi^{-1} (W) \bigr ) + Q
		\mathcal{L}^\vdim ( Y \cap W ) + ( Q-1 )
		\mathcal{L}^\vdim  ( Z \cap W )
	\end{gather*}
	for every subset $W$ of $\rel^\vdim $; in fact the estimate
	holds true for every Borel set by
	\cite[3.2.22\,(3)]{MR41:1976} and $\pushmeasure{\pi}{( \mu
	\restrict B )}$ is a Radon measure by
	\cite[2.2.17]{MR41:1976}. Also note that in view of the choice
	of $\Gamma_{\eqref{item:lipschitz_approximation_2:estimate}}$
	one can assume
	\begin{gather*}
		\mu ( B ) \leq ( \delta_1 / 4) \unitmeasure{\vdim },
	\end{gather*}
	which implies $\mathcal{L}^\vdim  (Y) > 0$ because it follows
	from the coarea estimate applied with $W = \cball{\vdim
	}{0}{1}$
	\begin{align*}
		( Q - 1 + \delta_1 / 2 ) \unitmeasure{\vdim } & \leq (
		1 - \vdim \varepsilon^2 ) \mu ( \cylinder {0}{1}{h}{T}
		) \\
		& \leq \mu (B) + Q \mathcal{L}^\vdim  ( Y ) + ( Q-1)
		\mathcal{L}^\vdim  ( Z ) \\
		& \leq ( \delta_1 / 4 ) \unitmeasure{\vdim } + ( Q - 1
		+ \delta_1/4 ) \unitmeasure{\vdim } +
		\mathcal{L}^\vdim  ( Y ) - ( \delta_1 / 4 )
		\mathcal{L}^\vdim  ( Z ),
	\end{align*}
	hence $\mathcal{L}^\vdim  (Z) \leq ( 4 / \delta_1 )
	\mathcal{L}^\vdim ( Y )$.
	
	In order to derive an upper bound for the $\mathcal{L}^\vdim $
	measure of $Z$, the following assertion will be proved.
	\emph{If $z \in Z$ with $\density^\vdim  ( \mathcal{L}^\vdim
	\restrict \rel^\vdim \without Z, z ) = 0$, then there exist
	$\zeta \in \rel^\vdim $ and $0 < t < \infty$ with}
	\begin{gather*}
		z \in \cball{}{\zeta}{t} \subset \cball{\vdim }{0}{1},
		\quad \measureball{\mathcal{L}^\vdim
		}{\cball{}{\zeta}{5t}} \leq 6 \cdot 5^\vdim  \, \mu
		\bigl ( B \cap \pi^{-1} ( \cball{}{\zeta}{t} ) \bigr
		).
	\end{gather*}
	Since $\mathcal{L}^\vdim  (Y) > 0$, some element
	$\cball{}{\zeta}{t}$ of the family of balls
	\begin{gather*}
		\{ \cball{}{(1-\theta)z}{\theta} \with 0 < \theta \leq
		1 \}
	\end{gather*}
	will satisfy
	\begin{gather*}
		z \in \cball{}{\zeta}{t} \subset \cball{\vdim }{0}{1},
		\quad 0 < \mathcal{L}^\vdim  ( Y \cap
		\cball{}{\zeta}{t}) \leq (1/2) \mathcal{L}^\vdim ( Z
		\cap \cball{}{\zeta}{t}).
	\end{gather*}
	Hence there exists $y \in Y \cap \oball{}{\zeta}{t}$. Noting
	for $\xi \in A(y)$ with $\density^\vdim  ( \mu, \xi ) > 0$,
	and $\kappa \in \rel^\adim $ with $| \plustrans{\pi^\ast
	(\zeta-y)} (\xi) - \kappa | < t$,
	\begin{gather*}
		t \leq 1, \quad \pi ( \xi ) = y, \\
		| \pi ( \kappa) - \zeta | = | \pi ( \xi + \pi^\ast (
		\zeta - y) - \kappa ) | \leq |
		\plustrans{\pi^\ast(\zeta-y)} (\xi) - \kappa | < t, \\
		\oball{}{\plustrans{\pi^\ast(\zeta-y)} ( \xi )}{t}
		\subset \pi^{-1} ( \cball{}{\zeta}{t}),
	\end{gather*}
	and, recalling the basic properties of $A$,
	\begin{gather*}
		\{ \kappa \in \oball{}{\plustrans{\pi^\ast(\zeta-y)} (
		\xi)}{t} \with | \project{T} ( \kappa - \xi ) | > s |
		\kappa - \xi | \} \subset \cylinder {0}{1}{h}{T} \cap
		\pi^{-1} ( \cball{}{\zeta}{t} ),
	\end{gather*}
	one can apply Lemma \ref{lemma:multilayer_monotonicity_offset} with
	\begin{gather*}
		\text{$\delta$, $X$, $d$, $r$, and $f$ replaced by} \\
		\text{$1/4$, $\{ \xi \in A(y) \with \density^\vdim
		(\mu, \xi)
		> 0 \}$, $t$, $2$, and} \\
		\text{$\plustrans{\pi^\ast (\zeta-y)} | \{ \xi \in
		A(y) \with \density^\vdim  ( \mu, \xi ) > 0 \}$}
	\end{gather*}
	to obtain
	\begin{gather*}
		( Q-1/4 ) \unitmeasure{\vdim } t^\vdim  \leq \mu \bigl
		( \cylinder {0}{1}{h}{T} \cap \pi^{-1} (
		\cball{}{\zeta}{t} ) \bigl).
	\end{gather*}
	The coarea estimate with $W = \cball{}{\zeta}{t}$ now implies
	\begin{gather*}
		\begin{aligned}
			& ( Q - 1/2 ) \unitmeasure{\vdim } t^\vdim \\ 
			\leq & \, \mu \bigl ( B \cap \pi^{-1} (
			\cball{}{\zeta}{t} ) \bigl ) + Q
			\mathcal{L}^\vdim  ( Y \cap \cball{}{\zeta}{t}
			) + (Q-1) \mathcal{L}^\vdim ( Z \cap
			\cball{}{\zeta}{t} ) \\
			= & \, \mu \bigl ( B \cap \pi^{-1} (
			\cball{}{\zeta}{t} ) \bigr ) + ( Q - 1/2 )
			\unitmeasure{\vdim } t^\vdim  \\
			& + (1/2) \mathcal{L}^\vdim  ( Y \cap
			\cball{}{\zeta}{t} ) - (1/2) \mathcal{L}^\vdim
			( Z \cap \cball{}{\zeta}{t} ),
		\end{aligned}
	\end{gather*}
	hence
	\begin{gather*}
		(2/3) \measureball{\mathcal{L}^\vdim
		}{\cball{}{\zeta}{t}} \leq \mathcal{L}^\vdim  ( Z \cap
		\cball{}{\zeta}{t} ) \leq 4 \, \mu \bigl ( B \cap
		\pi^{-1} (\cball{}{\zeta}{t} ) \bigl)
	\end{gather*}
	and the assertion follows.
	
	$\mathcal{L}^\vdim $ almost all $z \in Z$ satisfy the
	assumptions of the last assertion (cf.
	\cite[2.9.11]{MR41:1976}) and Vitali's covering theorem (cf.
	\cite[2.8.5]{MR41:1976}) implies
	\begin{gather*}
		\mathcal{L}^\vdim  (Z) \leq 6 \cdot 5^\vdim  \, \mu
		(B).
	\end{gather*}
	Clearly,
	\begin{gather*}
		\mathcal{L}^\vdim  ( \pi (B) ) \leq \mathcal{H}^\vdim
		( B ) \leq \mu (B).
	\end{gather*}
	Since $C \without N \subset Z \cup \pi (B)$, it follows
	\begin{gather*}
		\mathcal{L}^\vdim  ( C ) \leq ( 1 + 6 \cdot 5^\vdim  )
		\, \mu ( B ).
	\end{gather*}
	Finally, applying the coarea estimate with $W = C$ yields
	\begin{gather*}
		( 1 - \vdim  \varepsilon^2 ) \, \mu ( D ) \leq \mu (B)
		+ Q \mathcal{L}^\vdim  ( C ) \leq ( 1 + Q + 6Q \cdot
		5^\vdim  ) \, \mu (B).
	\end{gather*}
\end{proof}
\begin{proof} [Proof of \eqref{item:lipschitz_approximation_2:lip_related}]
	Assuming now that $x_1$ and $y$ satisfy the conditions of
	\eqref{item:lipschitz_approximation_2:lip_related}, it will be
	shown that one can take
	$\lambda_{\eqref{item:lipschitz_approximation_2:lip_related}}
	= \lambda$. Verifying
	\begin{gather*}
		\{ \xi \in \pi^{-1} ( \cball{\vdim }{0}{1} ) \with |
		\project{T} ( \xi - x_1 ) | > s | \xi - x_1 | \}
		\subset \sigma^{-1} ( \cball{}{\sigma(x_1)}{ \min \{
		\lambda / 2, \delta_4 \}}),
	\end{gather*}
	defining $\delta_6 = \min \{ \delta_2/3, \delta_3 / 2 \}$ and
	applying Lemma \ref{lemma:monotonicity_offset} with
	\begin{gather*}
		\text{$\delta$, $M$, $a$, $r$, $d$, $t$, and $\zeta$
		replaced by} \\
		\text{$\min \{ \delta_5, \delta_6 \}$, $\max \{ M, 2
		\}$, $x_1$, $2$, $1$, $1$, and $-\project{T} ( x_1 )$}
	\end{gather*}
	yields the lower bound
	\begin{gather*}
		\mu \big ( \pi^{-1} ( \cball{\vdim }{0}{1} ) \cap
		\sigma^{-1} ( \cball{}{\sigma(x_1)}{\min \{ \lambda /
		2, \delta_4 \}} ) \big) \geq ( 1 - \delta_6 )
		\unitmeasure{\vdim }
	\end{gather*}
	so that $h - \delta_4 < | \sigma (x_1) | \leq h$ would be
	incompatible with
	\begin{gather*}
		\mu \big ( \cylinder {0}{1}{h+\delta_4}{T} \without
		\cylinder { 0}{1}{ h-2\delta_4}{ T } \big ) \leq ( 1 -
		\delta_3 ) \unitmeasure{\vdim }
	\end{gather*}
	and the first part of
	\eqref{item:lipschitz_approximation_2:lip_related} follows.
	
	To prove the second part, one defines $X = \{ \xi \in A (y)
	\with \density^\vdim  ( \mu, \xi ) \in \nat \}$ and first
	observes that Lemma \ref{lemma:multilayer_monotonicity_offset}
	applied with
	\begin{gather*}
		\text{$\delta$, $d$, $r$, $t$, and $f$ replaced by},
		\\
		\text{$\delta_2/3$, $1$, $2$, $1$, and
		$\minustrans{\pi^\ast (y)} | X$}
	\end{gather*}
	yields
	\begin{gather*}
		\mu \big ( \union{\{ \xi \in \oball{}{x-\pi^\ast
		(y)}{1} \with | \project{T} ( \xi - x ) | > s | \xi -
		x | \}}{x \in X} \big) \geq \big ( Q - \delta_2/3 \big
		) \unitmeasure{\vdim }.
	\end{gather*}
	On the other hand
	\begin{gather*}
		\mu ( \cylinder {0}{1}{h}{T} ) \leq ( Q + 1 - \delta_2
		) \unitmeasure{\vdim }.
	\end{gather*}
	Therefore, using the basic properties of $A$ and the lower
	bound derived in the previous paragraph, for some $x \in
	X$
	\begin{gather*}
		\cylinder {0}{1}{h}{T} \cap \sigma^{-1} (
		\cball{}{\sigma(x_1)}{\lambda/2}) \cap \sigma^{-1} (
		\cball{}{\sigma(x)}{\lambda/2}) \neq \emptyset,
	\end{gather*}
	hence $|\sigma (x_1 - x) | \leq \lambda$ and
	\begin{gather*}
		\dist ( x_1, X ) \leq | \pi (x_1 - x ) | + | \sigma (
		x_1 - x) | \leq 2 \lambda =
		\lambda_{\ref{lemma:inverse_multilayer_monotonicity}}
		( \vdim , \delta_2, s_0 ) / 2 \leq 1.
	\end{gather*}
	Now, the point $x_2 \in X$ may be constructed by applying
	Lemma \ref{lemma:inverse_multilayer_monotonicity}\,\eqref{item:inverse_multilayer_monotonicity:lip_related}
	with
	\begin{gather*}
		\text{$\delta_1$, $\lambda$, $d$, $r$, $t$, $\zeta$,
		and $\xi$ replaced by} \\
		\text{$\delta_5$,
		$\lambda_{\ref{lemma:inverse_multilayer_monotonicity}}
		( \vdim , \delta_2, s_0 )$, $1$, $2$, $1$, $-\pi^\ast
		(y)$, and $x_1$}
	\end{gather*}
	noting
	\begin{gather*}
		\{ \xi \in \oball{}{x-\pi^\ast(y)}{1} \with |
		\project{T} ( \xi - x ) | > s_0 | \xi - x | \} \subset
		\cylinder {0}{1}{h}{T}
	\end{gather*}
	for $x \in X$.
	
	Since 
	$\varepsilon_1 \leq \varepsilon \leq ( 2 \isoperimetric{\vdim
	} )^{-1}$ and $\delta_5 \unitmeasure{\vdim} \leq ( 2
	\isoperimetric{\vdim} \vdim )^{-\vdim}$, the inclusion $A \cap
	\spt \mu \subset H$ follows from
	\cite[2.5]{snulmenn.isoperimetric}. Clearly, $( \pi \Join
	\sigma ) ( A \cap \spt \mu \cap \pi^{-1} (Y)) = \graph_Q f$ by
	\eqref{item:lipschitz_approximation_2:def}. Taking $y = \pi (
	x_1 )$, one obtains $H \cap \pi^{-1} (Y) \subset A \cap \spt
	\mu$, hence $( \pi \Join \sigma ) ( H \cap \pi^{-1} (Y) ) =
	\graph_Q f$ by the preceding inclusion.
\end{proof}
\begin{proof} [Proof of \eqref{item:lipschitz_approximation_2:boundary}]
	Recalling $( \mu \restrict A ) / 2 \leq \nu \restrict A$ and
	$\mathcal{L}^\vdim  (N) = 0$, it is enough to prove
	\begin{gather*}
		\Clos Y \subset N \cup Y, \quad \pi^{-1} ( \Clos Y )
		\cap H \subset A \cap \spt \mu
	\end{gather*}
	in view of the coarea formula \cite[3.2.22\,(3)]{MR41:1976}.
	
	Suppose for this purpose $y \in \Clos Y$. Since $f$ is
	Lipschitzian, there exists a unique $S \in \qspace_Q (
	\rel^{\codim } )$ such that
	\begin{gather*}
		(y,S) \in \Clos{\graph f}.
	\end{gather*}
	Note $\{ y \} \times \spt S = ( \{ y \} \times \rel^\codim )
	\cap \Clos{\graph_Qf}$ and define $R = \pi^{-1} ( \{ y \} )
	\cap \sigma^{-1} ( \spt S)$.  Since $A \cap \spt \mu$ is
	closed (cp.  \cite[2.9.14]{MR41:1976}),
	\begin{gather*}
		R \subset A \cap \spt \mu
	\end{gather*}
	and \eqref{item:lipschitz_approximation_2:lip_related} implies
	$H \cap \pi^{-1} ( \{y\} ) \subset R$, the second inclusion
	follows.
	
	Choose a sequence $y_i \in Y$ with $y_i \to y$ as $i \to
	\infty$ and abbreviate
	\begin{gather*}
		X_i = \{ \xi \in A (y_i) \with \density^\vdim  ( \mu,
		\xi) \in \nat \} \quad \text{for $i \in \nat$}.
	\end{gather*}
	Now, Lemma \ref{lemma:multilayer_monotonicity_offset} applied with
	\begin{gather*}
		\text{$\delta$, $X$, $d$, $r$, and $f$ replaced by} \\
		\text{$1/4$, $X_i$, $0$, $2$, and $\id{X_i}$}
	\end{gather*}
	yields for $i \in \nat$
	\begin{gather*}
		\mu \big ( \union{\cball{}{x}{t}}{x \in X_i} \big )
		\geq ( Q - 1/4 ) \unitmeasure{\vdim } t^\vdim  \quad
		\text{whenever $0 < t < 2$}.
	\end{gather*}
	Since $\spt f(y_i) \to \spt S$ in Hausdorff distance as $i \to
	\infty$ the same estimate holds with $X_i$ replaced by $R$ and
	\begin{gather*}
		Q - 1/4 \leq \limsup_{t \pluslim{0}} \frac{\mu \big (
		\union{\cball{}{x}{t}}{x \in R}
		\big)}{\unitmeasure{\vdim } t^\vdim } \leq \sum_{x \in
		R} \density^{\ast \vdim } ( \mu, x)
	\end{gather*}
	implies $y \notin Z$, hence the first inclusion.
\end{proof}
\begin{proof}
[Proof of \eqref{item:lipschitz_approximation_2:height_estimate}]
	Let $\psi := \mu \restrict H$. Using $(
	\eqpushmeasure{\pi}{\psi} ) \restrict Y \leq 2 (
	\pushmeasure{\pi}{( \nu \restrict H )}) \restrict Y \leq 2 Q
	\mathcal{L}^\vdim  \restrict Y$ and
	\begin{gather*}
		\{ x \in H \cap \pi^{-1} ( Y ) \with \dist ( x, \spt
		P) > \gamma \} \subset H \cap \pi^{-1} ( \{ y \in Y
		\with g(y) > \gamma \} )
	\end{gather*}
	for $0 < \gamma < \infty$ by
	\eqref{item:lipschitz_approximation_2:lip_related}, one infers
	\begin{gather*}
		\| \dist ( \cdot, \spt P ) \|_{\Lp{q} ( \mu \restrict
		H \cap \pi^{-1} ( Y ))} \leq 2 Q \| g \|_{\Lp{q} (
		\mathcal{L}^\vdim \restrict Y )}.
	\end{gather*}
	Hence only $\| \dist ( \cdot, \spt P ) \|_{\Lp{q} ( \mu
	\restrict H \without \pi^{-1} ( Y ))}$ needs to be estimated
	in the first part of
	\eqref{item:lipschitz_approximation_2:height_estimate}.

	Since $\lambda =
	\lambda_{\eqref{item:lipschitz_approximation_2:lip_related}}$,
	whenever $z \in \cball{\vdim }{0}{1} \without \Clos Y$ there
	exist $\zeta \in \rel^\vdim $ and $0 < t \leq \lambda/6$ such
	that
	\begin{gather*}
		z \in \cball{}{\zeta}{t} \subset \cball{\vdim }{0}{1},
		\quad \mathcal{L}^\vdim  ( \cball{}{\zeta}{t} \cap Y )
		= \mathcal{L}^\vdim ( \cball{}{\zeta}{t} \without Y )
	\end{gather*}
	as may be verified by consideration of the family of closed
	balls
	\begin{gather*}
		\{ \cball{}{(1-\theta)z}{\theta} \with 0 < \theta \leq
		\lambda / 6\}.
	\end{gather*}
	Therefore \cite[2.8.5]{MR41:1976} yields a countable set $I$
	and $\zeta_i \in \rel^\vdim $, $0 < t_i \leq \lambda/6$ and
	$y_i \in Y \cap \cball{}{\zeta_i}{t_i}$ for each $i \in I$
	such that
	\begin{gather*}
		\cball{}{\zeta_i}{t_i} \subset \cball{\vdim }{0}{1},
		\quad \mathcal{L}^\vdim  ( \cball{}{\zeta_i}{t_i} \cap
		Y ) = \mathcal{L}^\vdim  ( \cball{}{\zeta_i}{t_i}
		\without Y ), \\
		\cball{}{\zeta_i}{t_i} \cap \cball{}{\zeta_j}{t_j} =
		\emptyset \quad \text{whenever $i,j \in I$ with $i
		\neq j$}, \\
		\cball{\vdim }{0}{1} \without \Clos Y \subset
		\union{E_i}{i \in I} \subset \cball{\vdim }{0}{1}
	\end{gather*}
	where $E_i = \cball{}{\zeta_i}{5t_i} \cap \cball{\vdim
	}{0}{1}$ for $i \in I$. Let
	\begin{gather*}
		h_i := \mathcal{G} ( f (y_i), S ), \quad X_i := \{ \xi
		\in A (y_i) \with \density^\vdim  (\mu, \xi ) \in \nat
		\}
	\end{gather*}
	for $i \in I$, $J := \{ i \in I \with h_i \geq 18 t_i \}$, and
	$K := I \without J$.

	In view of \eqref{item:lipschitz_approximation_2:boundary}
	there holds
	\begin{gather*}
		\| d \|_{\Lp{q} ( \mu \restrict H \without \pi^{-1} (
		Y ) )} \leq \| d \|_{\Lp{q} ( \psi \restrict \pi^{-1}
		( \nunion{E_j}{j \in J} ) )} + \| d \|_{\Lp{q} ( \psi
		\restrict \pi^{-1} ( \nunion{E_i}{i \in K} ) )}
	\end{gather*}
	for every $\psi$ measurable function $d : \rel^\adim  \to
	[0,\infty[$. In order to estimate the terms on the right hand
	side for $d = \dist ( \cdot , \spt P )$, two observations will
	be useful. \emph{Firstly, if $i \in I$, $x_1 \in H \cap \pi^{-1}
	(E_i)$, then}
	\begin{gather*}
		\dist ( x_1 , \spt P ) \leq 6 t_i + h_i;
	\end{gather*}
	in fact $| \pi ( x_1 ) - y_i | \leq 6 t_i \leq \lambda$ and
	\eqref{item:lipschitz_approximation_2:lip_related} yields a
	point $x_2 \in X_i$ and
	\begin{gather*}
		\big | \perpproject{T} ( x_2 - x_1 ) \big | \leq L \,
		| \project{T} ( x_2 - x_1 ) | = L \, | \pi ( x_1 ) -
		y_i | \leq 6 t_i,
	\end{gather*}
	implying
	\begin{gather*}
		\dist ( x_1, \spt P ) \leq \big | \perpproject{T} (
		x_2 - x_1) \big | + \dist ( x_2, \spt P ) \leq 6 t_i +
		h_i.
	\end{gather*}
	Moreover,
	\begin{gather*}
		| x_2 - x_1 | \leq | \project{T} ( x_2 - x_1 ) | +
		\big | \perpproject{T} ( x_2 - x_1 ) \big | \leq 12
		t_i, \quad x_1 \in \cball{}{x_2}{12t_i},
	\end{gather*}
	hence
	\begin{gather*}
		H \cap \pi^{-1} ( E_i ) \subset
		\union{\cball{}{x}{12t_i}}{x \in X_i}
	\end{gather*}
	and, noting
	\begin{gather*}
		\setclassification{\oball{}{x-\pi^\ast (y_i)}{1}}{y}{|
		\project{T} (y-x)| > s_0 |y-x|} \subset
		\cylinder{0}{1}{h}{T}
	\end{gather*}
	for $x \in X_i$ by \eqref{item:lipschitz_approximation_2:def}
	and the choice $s_0$,
	Lemma \ref{lemma:inverse_multilayer_monotonicity}\,\eqref{item:inverse_multilayer_monotonicity:upper_bound}
	applied with
	\begin{gather*}
		\text{$\delta_1$, $s$, $\lambda$, $X$, $d$, $r$, $t$,
		$\zeta$, and $\tau$ replaced by} \\
		\text{$1$, $0$,
		$\lambda_{\ref{lemma:inverse_multilayer_monotonicity}}
		( \vdim , \delta_2 , s_0 )$, $X_i$, $1$, $2$, $1$, $-
		\pi^\ast ( y_i)$, and $12t_i$}
	\end{gather*}
	yields the second observation, \emph{namely
	\begin{gather*}
		\psi \big ( \pi^{-1} ( E_i ) \big ) \leq ( Q + 1 )
		\unitmeasure{\vdim } ( 12 t_i )^\vdim  \quad
		\text{whenever $i \in I$}.
	\end{gather*}}

	Now, the first term will be estimated. Note, if $j \in J$,
	then by the first observation
	\begin{gather*}
		\dist ( x, \spt P ) \leq (4/3) h_j \quad
		\text{whenever $x \in H \cap \pi^{-1} ( E_j )$}, \\
		(4/3) h_j \leq 2 \mathcal{G} ( f (y), S ) \quad
		\text{whenever $y \in Y \cap \cball{}{\zeta_j}{t_j}$},
	\end{gather*}
	because
	\begin{gather*}
		\mathcal{G} ( f (y), S ) \geq \mathcal{G} ( f (y_j),
		S) - L | y - y_j | \geq h_j - 2 L t_j \geq (2/3) h_j.
	\end{gather*}
	Using this fact and the preceding observations, one estimates
	with $J(\gamma) := \{ j \in J \with (4/3) h_j > \gamma \}$ for
	$0 < \gamma < \infty$
	\begin{multline*}
		\psi \big ( \pi^{-1} ( \union{E_j}{j \in J} ) \cap \{
		x \in \rel^\adim  \with \dist ( x, \spt P ) > \gamma
		\} \big ) \leq {\textstyle\sum_{j \in J(\gamma)}} \psi
		\big ( \pi^{-1} ( E_j) \big ) \\
		\leq {\textstyle\sum_{j \in J(\gamma)}} ( Q + 1 )
		\unitmeasure{\vdim } (12t_j)^\vdim  \leq ( Q + 1 )
		(12)^\vdim \mathcal{L}^\vdim  \big (
		\union{\cball{}{\zeta_j}{t_j}}{j \in J(\gamma)} \big )
		\\
		\leq 2 ( Q + 1 ) (12)^\vdim  \mathcal{L}^\vdim  \big (
		\union{\cball{}{\zeta_j}{t_j} \cap Y}{j \in J(\gamma)}
		\big ) \\
		\leq 2 ( Q + 1 ) (12)^\vdim  \, \mathcal{L}^\vdim   (
		\{ y \in Y \with \mathcal{G} ( f (y), S ) > \gamma/ 2
		\} ),
	\end{multline*}
	hence
	\begin{gather*}
		\| \dist ( \cdot, \spt P ) \|_{\Lp{q} ( \psi \restrict
		\pi^{-1} ( \nunion{E_j}{j \in J} ) ) } \leq ( 2 ( Q +
		1 ) (12)^\vdim  ) 2 \, \| g \|_{\Lp{q} (
		\mathcal{L}^\vdim \restrict Y ) }.
	\end{gather*}

	To estimate the second term, one notes, if $i \in K$, $x \in H
	\cap \pi^{-1} ( E_i )$, then
	\begin{gather*}
		\dist ( x, \spt P ) < 24 t_i.
	\end{gather*}
	Therefore one estimates with $K(\gamma) := \{ i \in K \with 24
	t_i > \gamma \}$ for $0 < \gamma < \infty$ and $u : \rel^\vdim
	\to \rel$ defined by $u = \sum_{i \in I} 2 t_i \,
	\chi_{\cball{}{\zeta_i}{t_i}}$
	\begin{gather*}
		\psi \big ( \pi^{-1} ( \union{E_i}{i \in K} ) \cap \{
		x \in \rel^\adim  \with \dist ( x, \spt P ) > \gamma
		\} \big ) \leq {\textstyle\sum_{i \in K(\gamma)}} \psi
		\big ( \pi^{-1} ( E_i) \big ) \\
		\leq {\textstyle\sum_{i \in K(\gamma)}} ( Q + 1 )
		\unitmeasure{\vdim } ( 12 t_i)^\vdim  \leq ( Q + 1 ) (
		12)^\vdim \mathcal{L}^\vdim  \big (
		\union{\cball{}{\zeta_i}{t_i}}{i \in K(\gamma)} \big )
		\\
		\leq ( Q + 1 ) ( 12 )^\vdim  \mathcal{L}^\vdim  \big (
		\{ y \in \rel^\vdim  \with u ( y ) > \gamma/(12) \}
		\big ),
	\end{gather*}
	hence
	\begin{gather*}
		\| \dist ( \cdot, \spt P ) \|_{\Lp{q} ( \psi \restrict
		\pi^{-1} ( \nunion{E_i}{i \in K}))} \leq ( Q + 1 ) (
		12)^{\vdim +1} \| u \|_{\Lp{q} ( \mathcal{L}^\vdim )
		}.
	\end{gather*}

	Combining these two estimates and
	\begin{gather*}
		\mathcal{L}^\vdim  \big (
		\union{\cball{}{\zeta_i}{t_i}}{i \in I} \big ) \leq 2
		\mathcal{L}^\vdim  ( \cball{\vdim }{0}{1} \without Y),
		\\
		{\textstyle\int} |u|^q \ud \mathcal{L}^\vdim  =
		{\textstyle\sum_{i \in I}} ( 2 t_i )^q
		\unitmeasure{\vdim } (t_i)^\vdim  \leq 2^q
		\unitmeasure{\vdim }^{-q/\vdim } \big (
		{\textstyle\sum_{i \in I}}
		\measureball{\mathcal{L}^\vdim
		}{\cball{}{\zeta_i}{t_i}} \big)^{1+q/\vdim }, \\
		\| u \|_{\Lp{q} ( \mathcal{L}^\vdim )} \leq 2^3
		\unitmeasure{\vdim}^{-1/\vdim} \mathcal{L}^\vdim (
		\cball{\vdim}{0}{1} \without Y )^{1/q+1/\vdim},
	\end{gather*}
	one obtains the first part of the conclusion of
	\eqref{item:lipschitz_approximation_2:height_estimate}.

	To prove the second part, suppose $x_1 \in H$. Since
	\begin{gather*}
		\pi ( x_1) \in \cball{}{(1-\theta)\pi(x_1)}{\theta}
		\subset \cball{\vdim }{0}{1}, \quad \mathcal{L}^\vdim
		( \cball{}{(1-\theta)\pi(x_1)}{\theta} \cap Y ) > 0
	\end{gather*}
	for $(\mathcal{L}^\vdim  ( \cball{\vdim }{0}{1} \without Y ) /
	\unitmeasure{\vdim } )^{1/\vdim } < \theta < 1$, there exists
	for any $\delta > 0$ a $y \in Y$ with
	\begin{gather*}
		\mathcal{G} (f(y),S) \leq \| g \|_{\Lp{\infty} (
		\mathcal{L}^\vdim  \restrict Y )}, \\
		| \pi ( x_1 ) - y | \leq 2 \big ( \mathcal{L}^\vdim  (
		\cball{\vdim }{0}{1} \without Y ) / \unitmeasure{\vdim
		} \big)^{1/\vdim } + \delta,
	\end{gather*}
	in particular $| \pi (x_1) - y | \leq \lambda$ for small
	$\delta$.  Therefore
	\eqref{item:lipschitz_approximation_2:lip_related} may be
	applied to construct a point $x_2 \in A (y)$ with
	$\density^\vdim  ( \mu, x_2 ) \in \nat$ and
	\begin{gather*}
		\big | \perpproject{T} ( x_2 - x_1 ) \big | \leq L \,
		| \project{T} ( x_2 - x_1 ) | \leq | \pi ( x_1 ) - y
		|.
	\end{gather*}
	Finally,
	\begin{gather*}
		\begin{aligned}
			\dist ( x_1, \spt P ) & \leq \dist ( x_2, \spt
			P ) + \big | \perpproject{T} ( x_2 - x_1 )
			\big | \\
			& \leq \mathcal{G} ( f(y), S ) + 2 \big (
			\mathcal{L}^\vdim  ( \cball{\vdim }{0}{1}
			\without Y ) / \unitmeasure{\vdim }
			\big)^{1/\vdim } + \delta
		\end{aligned}
	\end{gather*}
	and $\delta$ can be chosen arbitrarily small.
\end{proof}
\begin{proof} [Proof of \eqref{item:lipschitz_approximation_2:misc}]
	Part \eqref{item:item:lipschitz_approximation_2:misc:a}
	follows from \eqref{item:lipschitz_approximation_2:lip} and
	Theorem \ref{thm:rectifiability_Q_valued_graph}. Part
	\eqref{item:item:lipschitz_approximation_2:misc:b} follows
	from \eqref{item:lipschitz_approximation_2:lip},
	\eqref{item:lipschitz_approximation_2:lip_related} and
	Theorem \ref{thm:rectifiability_Q_valued_graph}. Parts
	\eqref{item:item:lipschitz_approximation_2:misc:apf0} and
	\eqref{item:item:lipschitz_approximation_2:misc:apf} are
	consequences of
	\eqref{item:item:lipschitz_approximation_2:misc:b} in
	conjunction with Allard \cite[8.9\,(5)]{MR0307015}, noting
	concerning
	\eqref{item:item:lipschitz_approximation_2:misc:apf} that $\|
	D g_i (0) \| \leq \| \ap A f (x) \| \leq \Lip f$ whenever $g_i
	: \rel^\vdim \to \rel^\codim$ are affine functions such that
	$\ap A f (x) (v) = \sum_{i=1}^Q \mathbb{\Lbrack} g_i (v)
	\mathbb{\Rbrack}$ for $v \in \rel^\adim$ by Almgren
	\cite[1.1\,(9)--(11)]{MR1777737}.
\end{proof}
\begin{remark}
	The $\mu$ measure of $B$ occuring in
	\eqref{item:lipschitz_approximation_2:estimate} can either be
	estimated by a direct covering argument, as will be done in
	Corollary \ref{corollary:sobolev_poincare}, or, in order to obtain a
	slightly more precise estimate, by use of \cite[2.9,
	2.10]{snulmenn.isoperimetric}, as will be done in
	Theorem \ref{thm:limit_poincare}.
\end{remark}
\section{A Sobolev Poincar\'e type inequality for integral varifolds}
\label{sect:poincare}
In this section the two main theorems, Theorems \ref{thm:sobolev_poincare} and
\ref{thm:limit_poincare}, are proved, the first being a Sobolev Poincar\'e
type inequality at some fixed scale $r$ but involving of necessity mean
curvature, the second considering the limit $r$ tends to $0$. For this purpose
the distance of an integral $\vdim $ varifold from a $Q$ valued plane is
introduced. One cannot use ordinary planes in Theorem \ref{thm:sobolev_poincare}
(without additional assumptions) as may be seen from the fact that any $Q$
valued plane is stationary with vanishing tilt. In
\ref{thm:limit_poincare}--\ref{remark:limit_poincare} an answer to the Problem
posed in the introduction is provided.
\begin{definition} \label{def:vari_height}
	Suppose $m,n,Q \in \nat$, $1 \leq q \leq \infty$, $a
	\in \rel^\adim $, $0 < r < \infty$, $0 < h \leq \infty$, $T \in
	\grass{\adim }{\vdim }$, $P$ is a $Q$ valued plane parallel to $T$
	(see Definition \ref{def:q_valued_plane}), $\mu$ is an integral $\vdim $ varifold
	in an open superset of $\cylinder{a}{r}{h}{T}$, $A$ is the
	$\mathcal{H}^\vdim $ measurable set of all $x \in T \cap
	\cball{}{\project{T}(a)}{r}$ such that for some $R(x), S(x) \in
	\qspace_Q ( \rel^\adim  )$
	\begin{align*}
		\| R (x) \| & = \density^\vdim  ( P \restrict
		\cylinder{a}{r}{h}{T} , \cdot ) \mathcal{H}^0 \restrict
		\project{T}^{-1} \lIm \{ x \} \rIm, \\
		\| S (x) \| & = \density^\vdim  ( \mu \restrict
		\cylinder{a}{r}{h}{T} , \cdot ) \mathcal{H}^0 \restrict
		\project{T}^{-1} \lIm \{ x \} \rIm
	\end{align*}
	and $g : A \to \rel$ is the $\mathcal{H}^\vdim $ measurable function
	defined by $g(x) = \mathcal{G} (R(x),S(x))$ for $x \in
	A$.\footnote{The asserted measurabilities may be shown by use of the
	coarea formula (cf. \cite[3.2.22\,(3)]{MR41:1976}).}

	Then the \emph{$q$ tilt of $\mu$ with respect to $T$ in
	$\cylinder{a}{r}{h}{T}$} is defined by
	\begin{gather*}
		\tilt_q (\mu,a,r,h,T) = r^{-\vdim /q} \| \project{T_\mu} -
		\project{T} \|_{\Lp{q} ( \mu \restrict \cylinder { a}{ r}{ h}{
		T })}.
	\end{gather*}
	The \emph{$q$ height of $\mu$ with respect to $P$ in $\cylinder
	{a}{r}{h}{T}$}, denoted by $\height_q ( \mu, a, r, h, P )$, is defined
	to be the sum of
	\begin{gather*}
		r^{-1-\vdim /q} \| \dist ( \cdot, \spt P ) \|_{\Lp{q} ( \mu
		\restrict \cylinder{a}{r}{h}{T} )}
	\end{gather*}
	and the infimum of the numbers
	\begin{gather*}
		r^{-1-\vdim /q} \| g \|_{\Lp{q} ( \mathcal{H}^\vdim  \restrict
		Y )} + r^{-1-\vdim /q} \mathcal{H}^\vdim  ( T \cap
		\cball{}{\project{T}(a)}{r} \without Y )^{1/q+1/\vdim }
	\end{gather*}
	corresponding to all $\mathcal{H}^\vdim $ measurable subsets $Y$ of
	$A$. Moreover, \emph{the $q$ height of $\mu$ in
	$\cylinder{a}{r}{h}{T}$}, denoted by $\height_q ( \mu, a, r, h, Q,
	T)$, is defined to be the infimum of all numbers $\height_q ( \mu, a,
	r, h, P )$ corresponding to all $Q$ valued planes $P$ parallel to $T$.
\end{definition}
\begin{remark}
	$\tilt_q ( \mu, a, r, h, T )$ generalises $\tiltex_\mu$ in an obvious
	way.
	
	$\height_q ( \mu , a, r, h, P )$ measures the distance of $\mu$ in
	$\cylinder { a}{ r}{ h}{ T }$ from the $Q$ valued plane $P$. To obtain
	a reasonable definition of distance, neither the first nor the second
	summand would be sufficient. The first summand is $0$ if $\mu = P
	\restrict B$ for some $\mathcal{H}^\vdim $ measurable set $B$. The
	second summand is $0$ if $\mu = P + \mathcal{H}^\vdim  \restrict B$
	for some $\mathcal{H}^\vdim $ measurable subset $B$ of
	$\cylinder{a}{r}{h}{T}$ with $\mathcal{H}^\vdim  ( B ) < \infty$ and
	$\mathcal{H}^\vdim  ( \project{T} \lIm B \rIm ) = 0$. From a more
	technical point of view, the second summand is added because it is
	useful in the iteration procedure occurring in
	Theorem \ref{thm:limit_poincare} where the distance of $Q$ valued planes
	corresponding to different radii $r$ has to be estimated. The choice
	of the exponent $1/q+1/\vdim$ instead of $1/q$ for $\mathcal{H}^\vdim
	( T \cap \cball{}{\project{T}(a)}{r} \without Y )$ is motivated by
	Lemma \ref{lemma:lipschitz_approximation_2}\,\eqref{item:lipschitz_approximation_2:height_estimate}.
\end{remark}
\begin{remark} \label{remark:infima}
	One readily checks that $\height_q ( \mu, a, r, h, P ) = 0$ implies
	\begin{gather*}
		\mu \restrict \cylinder { a}{ r}{ h}{ T } = P \restrict
		\cylinder { a}{ r}{ h}{ T }
	\end{gather*}
	and $\height_q ( \mu, a, r, h, Q, T ) = 0$, $h < \infty$ implies
	$\height_q ( \mu, a, r, h, P ) = 0$ for some $Q$ valued plane $P$
	parallel to $T$.

	More generally, the infima occurring in the definitions of $\height_q
	( \mu, a, r, h, P)$ and $\height_q ( \mu, a, r, h, Q, T )$ are
	attained.  However, this latter fact will neither be used nor proved
	in this work.
\end{remark}
\begin{theorem} \label{thm:sobolev_poincare}
	Suppose $m, n, Q \in \nat$, $1 \leq M < \infty$, and
	$0 < \delta \leq 1$.

	Then there exists a positive, finite number $\varepsilon$ with the
	following property.

	If $a \in \rel^\adim $, $0 < r < \infty$, $0 < h \leq \infty$, $T \in
	\grass{\adim }{\vdim }$, $\delta r < h$, $\mu$ is an integral $\vdim $
	varifold in an open superset of $\cylinder {a}{ 3r}{ h+2r}{ T }$ with
	locally bounded first variation satisfying
	\begin{gather*}
		( Q - 1 + \delta ) \unitmeasure{\vdim } r^\vdim  \leq \mu (
		\cylinder{a}{r}{h}{T} ) \leq ( Q + 1 - \delta )
		\unitmeasure{\vdim } r^\vdim , \\
		\mu ( \cylinder {a}{r}{h+\delta r}{ T} \without \cylinder
		{a}{r}{h-\delta r}{ T}) \leq ( 1 - \delta ) \unitmeasure{\vdim
		} r^\vdim , \\
		\mu ( \cylinder {a}{ 3r}{ h + 2r}{ T } ) \leq M
		\unitmeasure{\vdim } r^\vdim , \\
		\| \delta \mu \| ( \cylinder { a}{ 3r}{ h+2r}{ T } ) \leq
		\varepsilon r^{\vdim -1}, \quad \tilt_1 ( \mu, a, 3r, h+2r, T
		) \leq \varepsilon,
	\end{gather*}
	$G$ is the set of all $x \in \cylinder{a}{r}{h}{T} \cap \spt \mu$ such
	that
	\begin{gather*}
		\measureball{\| \delta \mu \|}{\cball{}{x}{\varrho}} \leq ( 2
		\isoperimetric{\vdim } )^{-1} \, \mu (
		\cball{}{x}{\varrho})^{1-1/\vdim } \quad \text{whenever $0 <
		\varrho < 2r$},
	\end{gather*}
	and $A$ is the set defined as $G$ with $\varepsilon$ replacing
	$(2\isoperimetric{\vdim })^{-1}$, then the following two statements
	hold:
	\begin{enumerate}
		\item \label{item:poincare_lorentz:lp} If $1 \leq q < \vdim $,
		$q^\ast = \vdim q/(\vdim -q)$, then
		\begin{multline*}
			\qquad \quad \height_{q^\ast} ( \mu \restrict G, a, r,
			h, Q, T) \\
			\leq \Gamma_{\eqref{item:poincare_lorentz:lp}} \big (
			\tilt_q ( \mu, a, 3r, h + 2r, T ) + ( r^{-\vdim } \mu
			( \cylinder { a}{ r}{ h}{ T } \without A ))^{1/q} \big )
		\end{multline*}
		where $\Gamma_{\eqref{item:poincare_lorentz:lp}}$ is a
		positive, finite number depending only on $m$, $n$, $Q$, $M$,
		$\delta$, and $q$.
		\item \label{item:poincare_lorentz:k} If $\vdim  < q \leq
		\infty$, then
		\begin{multline*}
			\qquad \quad \height_\infty ( \mu \restrict G, a, r,
			h, Q, T ) \\
			\leq \Gamma_{\eqref{item:poincare_lorentz:k}} \big (
			\tilt_q ( \mu, a, 3r, h + 2r, T ) + ( r^{-\vdim } \mu
			( \cylinder {a}{ r}{ h}{T} \without A ))^{1/q} \big).
		\end{multline*}
		where $\Gamma_{\eqref{item:poincare_lorentz:k}}$ is a
		positive, finite number depending only on $m$, $n$, $Q$, $M$,
		$\delta$, and $q$.
	\end{enumerate}
\end{theorem}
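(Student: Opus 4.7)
The plan is to combine the approximation of $\mu$ by a Lipschitzian $Q$-valued function, provided by Lemma~\ref{lemma:lipschitz_approximation_2}, with the Sobolev--Poincar\'e inequality for $Q$-valued functions in Theorem~\ref{thm:poincare_q_valued_ball}. First I apply Lemma~\ref{lemma:lipschitz_approximation_2} with its parameter $\varepsilon_1$ set to (a fixed constant multiple of) the present $\varepsilon$ and its $\delta_i$ matched to the theorem's $\delta$; this yields a set $Y \subset \cball{}{\pi(a)}{r}$, a Lipschitzian $f \colon Y \to \qspace_Q(\rel^\codim)$, the graphical part $H$, the bad set $B$, and error sets $C, D$ satisfying $\mathcal{L}^\vdim(C) + \mu(D) \leq \Gamma \mu(B)$. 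After rescaling $Y$ into the unit ball, Theorem~\ref{thm:poincare_q_valued_ball} furnishes some $S \in \qspace_Q(\rel^\codim)$ with $\heighti_{q^\ast}(f, S) \leq \Gamma \tilti_q(f)$ in case~\eqref{item:poincare_lorentz:lp}, respectively $\heighti_\infty(f, S) \leq \Gamma \tilti_q(f)$ in case~\eqref{item:poincare_lorentz:k}; let $P$ denote the $Q$-valued plane parallel to $T$ associated with $S$ via $\sigma$.

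The approximation lemma then transfers this bound to $\mu$: part~\eqref{item:lipschitz_approximation_2:height_estimate} yields
\begin{gather*}
	\| \dist(\cdot, \spt P) \|_{\Lp{q^\ast}(\mu \restrict H)} \leq \Gamma \bigl( \| g \|_{\Lp{q^\ast}(\mathcal{L}^\vdim \restrict Y)} + \mathcal{L}^\vdim ( \cball{}{\pi(a)}{r} \without Y )^{1/q^\ast + 1/\vdim} \bigr),
\end{gather*}
and the exponent identity $1/q^\ast + 1/\vdim = 1/q$ matches the target. The tilt $\tilti_q(f)$ is in turn dominated, via parts~\eqref{item:item:lipschitz_approximation_2:misc:apf0} and~\eqref{item:item:lipschitz_approximation_2:misc:apf} together with the coarea formula (the Jacobian of $\project{T}$ restricted to $T_\xi \mu$ being close to $1$ by \ref{miniremark:tilt}), by a constant times $r^{\vdim /q} \tilt_q(\mu, a, 3r, h+2r, T)$; and part~\eqref{item:lipschitz_approximation_2:estimate} allows us to replace $\mathcal{L}^\vdim(\cball{}{\pi(a)}{r} \without Y)$ by $\Gamma \mu(B)$.

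The main remaining task, and the one I expect to be the chief obstacle, is to identify $\mu(B)$ with $\mu(\cylinder{a}{r}{h}{T} \without A)$ plus a summand absorbable into the tilt, and to pass from an estimate on $\mu \restrict H$ to one on $\mu \restrict G$. The set $B$ splits according to whether the variation or the tilt-integral condition fails; by the choice of $\varepsilon_1$ the variation-bad part of $B \cap \spt \mu$ is exactly $(\cylinder{a}{r}{h}{T} \cap \spt \mu) \without A$, while the tilt-bad part is bounded by a Besicovitch maximal function argument applied to $x \mapsto \sup_\varrho \mu(\cball{}{x}{\varrho})^{-1} \int_{\cball{}{x}{\varrho}} | \project{T_\xi\mu} - \project{T} | \ud\mu$: the strong $(q,q)$ bound for $q > 1$, respectively the weak $(1,1)$ bound for $q = 1$, yields an upper bound $\Gamma \varepsilon^{-q} \| \project{T_\mu} - \project{T} \|^q_{\Lp{q}(\mu)}$ which, after the standard rescaling, produces exactly the required tilt contribution (with an exponent $1/q$ from passing to the $q$-th root). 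For the transition from $H$ to $G$, Lemma~\ref{lemma:lower_density_bound} together with \cite[2.5]{snulmenn.isoperimetric} ensures $A \cap \spt \mu \subset H$ once $\varepsilon$ is small enough, hence $\mu(G \without H) \leq \mu(\cylinder{a}{r}{h}{T} \without A)$; on this remainder the distance from $\spt P$ is controlled by a constant multiple of $r$ (by invoking Lemma~\ref{lemma:quasi_monotonicity} in conjunction with the cylinder mass bound to prevent $h$ from exceeding a fixed multiple of $r$), so its contribution to the $\Lp{q^\ast}$ height is absorbed into the second term on the right. Case~\eqref{item:poincare_lorentz:k} is handled in parallel, using the $\Lp{\infty}$ assertions of Theorem~\ref{thm:poincare_q_valued_ball}\eqref{item:poincare_q_valued_ball:k} and the corresponding $\Lp{\infty}$ conclusion of Lemma~\ref{lemma:lipschitz_approximation_2}\eqref{item:lipschitz_approximation_2:height_estimate}.
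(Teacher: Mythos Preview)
Your overall strategy coincides with the paper's, but two steps do not go through as written.

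First, Theorem~\ref{thm:poincare_q_valued_ball} is stated for functions defined on the full ball $\cball{\vdim}{0}{1}$, not on an arbitrary measurable subset~$Y$; a Sobolev--Poincar\'e inequality on a general subset would require geometric regularity of~$Y$ that you have not established. The paper fixes this by extending $f$ to a Lipschitzian $g : \cball{\vdim}{0}{1} \to \qspace_Q(\rel^\codim)$ via Almgren's bi-Lipschitzian embedding and retraction (with $\spt g(x) \subset \cball{\codim}{0}{h}$ to keep the resulting $S$ inside the cylinder), applies Theorem~\ref{thm:poincare_q_valued_ball} to~$g$, and then bounds the extra contribution $\tilti_q(g \,|\, \cball{\vdim}{0}{1} \without Y)$ by $\Gamma\,\mathcal{L}^\vdim(\cball{\vdim}{0}{1} \without Y)^{1/q}$ using $\Lip g \leq \Gamma_0$. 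This extension step is essential and is missing from your outline.

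Second, your argument for passing from $\mu \restrict H$ to $\mu \restrict G$ contains a false claim: the hypotheses allow $0 < h \leq \infty$ with only $\delta r < h$, and neither Lemma~\ref{lemma:quasi_monotonicity} nor the cylinder mass bound forces $h$ to be comparable to~$r$ (think of $\mu$ concentrated in a thin slab inside a very tall cylinder). Hence you cannot bound $\dist(\cdot,\spt P)$ on $G \without H$ by a multiple of~$r$ in the way you suggest. Fortunately the remainder is empty: the very verification you invoke for $A \cap \spt\mu \subset H$ applies verbatim to~$G$, since the density lower bound from \cite[2.5]{snulmenn.isoperimetric} uses only the constant $(2\isoperimetric{\vdim})^{-1}$ defining~$G$, and the two integral conditions of~$H$ at the single scale~$2r$ follow from the global smallness $\|\delta\mu\|(\cylinder{a}{3r}{h+2r}{T}) \leq \varepsilon r^{\vdim-1}$ and $\tilt_1(\mu,a,3r,h+2r,T) \leq \varepsilon$ combined with that density lower bound. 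This gives $G \subset H$ outright, so $\mu \restrict G \leq \mu \restrict H$ and there is nothing further to estimate.
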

\begin{proof}
	Let $\Gamma_0 := \Lip ( \boldsymbol{\xi}^{-1} ) \Lip (
	\boldsymbol{\varrho} ) \Lip ( \boldsymbol{\xi} )$ with the functions
	$\boldsymbol{\xi}$, $\boldsymbol{\varrho}$ as in Almgren
	\cite[1.3\,(2)]{MR1777737}, hence $\Gamma_0$ is a positive, finite
	number depending only on $\codim$ and $Q$, and let
	\begin{gather*}
		\Gamma_1 :=
		\Gamma_{\ref{lemma:lipschitz_approximation_2}\eqref{item:lipschitz_approximation_2:estimate}}
		(Q,\vdim ,\delta/2), \quad L:=1, \\
		\varepsilon_0 :=
		\varepsilon_{\ref{lemma:lipschitz_approximation_2}} (m, n, Q,
		1, M, \delta/2, \delta/2, \delta/2, \delta/2 , ( 2
		\isoperimetric{\vdim} \vdim )^{-\vdim} / \unitmeasure{\vdim}
		), \quad
		\varepsilon_1 := \varepsilon_0, \\
		\lambda :=
		\lambda_{\ref{lemma:lipschitz_approximation_2}\eqref{item:lipschitz_approximation_2:lip_related}}
		(\vdim , \delta/2, \delta/2)
	\end{gather*}
	and choose $0 < \varepsilon \leq \varepsilon_0$ such that
	\begin{gather*}
		\varepsilon \leq \varepsilon_0 ( \vdim  \isoperimetric{\vdim
		})^{1-\vdim }, \quad 3^\vdim  \varepsilon \leq \varepsilon_0 (
		\vdim  \isoperimetric{\vdim })^{-\vdim }, \\
		\Gamma_1 \besicovitch{\adim } 3^\vdim  \varepsilon /
		\varepsilon_0 \leq (1/2) \unitmeasure{1} ( \lambda / 6 ) \quad
		\text{if $\vdim  = 1$}, \\
		\Gamma_1 \besicovitch{\adim } \big ( 3^\vdim  \varepsilon /
		\varepsilon_0 +
		(\varepsilon/\varepsilon_0)^{\vdim /(\vdim -1)} \big ) \leq
		(1/2) \unitmeasure{\vdim } ( \lambda / 6)^\vdim  \quad
		\text{if $\vdim  > 1$}.
	\end{gather*}

	Assume $a = 0$ and $r = 1$. Choose orthogonal projections $\pi :
	\rel^\adim  \to \rel^\vdim $, $\sigma : \rel^\adim  \to \rel^{\codim
	}$ with $\pi \circ \sigma^\ast = 0$ and $\im \pi^\ast = T$. Applying
	Lemma \ref{lemma:lipschitz_approximation_2}, one obtains sets $Y$, $B$,
	and $H$ and a Lipschitzian function $f : Y \to \qspace_Q (
	\rel^{\codim })$ with the properties listed there. Using
	Lemma \ref{lemma:lipschitz_approximation_2}\,\eqref{item:lipschitz_approximation_2:lip}\,\eqref{item:lipschitz_approximation_2:def}
	and Almgren \cite[1.3\,(2)]{MR1777737} and noting the existence of a
	retraction of $\rel^{\codim }$ to $\cball{\codim }{0}{h}$ with
	Lipschitz constant $1$ (cf. \cite[4.1.16]{MR41:1976}), one constructs
	an extension $g : \cball{\vdim }{0}{1} \to \qspace_Q ( \rel^{\codim
	})$ of $f$ with $\Lip g \leq \Gamma_0$ and $\spt g (x) \subset
	\cball{\codim }{0}{h}$ for $x \in \cball{\vdim }{0}{1}$.

	Next, it will be verified that $G \subset H$; in fact for $x \in G$
	using \cite[2.5]{snulmenn.isoperimetric} yields
	\begin{gather*}
		\measureball{\mu}{\cball{}{x}{\varrho}} \geq ( 2
		\isoperimetric{\vdim} \vdim )^{-\vdim} \varrho^\vdim \quad
		\text{for $0 < \varrho < 2$}, \\
		\measureball{\| \delta \mu \|}{\oball{}{x}{2}} \leq \| \delta
		\mu \| ( \cylinder { 0}{ 3}{ h + 2}{ T } ) \leq \varepsilon
		\leq \varepsilon_0 \, \mu ( \oball{}{x}{2} )^{1-1/\vdim }, \\
		{\textstyle\int_{\oball{}{x}{2}}} | \eqproject{T_\xi\mu} -
		\project{T} | \ud \mu ( \xi ) \leq {\textstyle\int_{\cylinder
		{ 0}{ 3}{ h+2}{ T }}} | \eqproject{T_\xi \mu} - \project{T} |
		\ud \mu ( \xi ) \leq 3^\vdim  \varepsilon \leq \varepsilon_0
		\, \measureball{\mu}{\oball{}{x}{2}}.
	\end{gather*}

	In order to be able to apply
	Lemma \ref{lemma:lipschitz_approximation_2}\,\eqref{item:lipschitz_approximation_2:height_estimate},
	it will be shown
	\begin{gather*}
		\mathcal{L}^\vdim  ( \cball{\vdim }{0}{1} \without Y ) \leq
		(1/2) \unitmeasure{\vdim } ( \lambda / 6)^\vdim .
	\end{gather*}
	Let $B_1$ be the set of all $x \in B$ such that
	\begin{gather*}
		\measureball{\| \delta \mu \|}{\cball{}{x}{\varrho}} >
		\varepsilon_0 \, \mu ( \cball{}{x}{\varrho} )^{1-1/\vdim }
		\quad \text{for some $0 < \varrho < 2$},
	\end{gather*}
	and let $B_2$ be the set of all $x \in B$ such that
	\begin{gather*}
		{\textstyle\int_{\cball{}{x}{\varrho}}} | \eqproject{T_\xi\mu}
		- \project{T} | \ud \mu ( \xi ) > \varepsilon_0 \,
		\measureball{\mu}{\cball{}{x}{\varrho}} \quad \text{for some
		$0 < \varrho < 2$}.
	\end{gather*}
	Clearly, Besicovitch's covering theorem implies
	\begin{gather*}
		\mu ( B_2 ) \leq \besicovitch{\adim } (\varepsilon_0)^{-1}
		3^\vdim  \tilt_1 ( \mu, 0, 3, h+2, T ) \leq \besicovitch{\adim
		}  3^\vdim  \varepsilon/\varepsilon_0.
	\end{gather*}
	Moreover, $B_1 = \emptyset$ if $\vdim  = 1$, and Besicovitch's
	covering theorem implies in case $\vdim  > 1$
	\begin{gather*}
		\begin{aligned}
			\mu ( B_1 ) & \leq \besicovitch{\adim}
			(\varepsilon_0)^{\vdim/(1-\vdim)} \| \delta \mu \| (
			\cylinder { 0}{ 3}{ h+2}{ T} )^{\vdim /(\vdim -1)} \\
			& \leq \besicovitch{\adim}
			(\varepsilon/\varepsilon_0)^{\vdim /(\vdim -1)}.
		\end{aligned}
	\end{gather*}
	Therefore the desired estimate is implied by
	Lemma \ref{lemma:lipschitz_approximation_2}\,\eqref{item:lipschitz_approximation_2:estimate}
	and the choice of $\varepsilon$.

	To prove part \eqref{item:poincare_lorentz:lp}, let $1 \leq q < \vdim
	$, $q^\ast = \vdim q/(\vdim -q)$, define
	\begin{gather*}
		\Gamma_2 = 1 + ( 12 )^{\vdim +1} Q \max \{ 1,
		\Gamma_{\ref{lemma:lipschitz_approximation_2}\eqref{item:lipschitz_approximation_2:height_estimate}}
		( \vdim ) \}, \quad
		\Gamma_3 = 2
		\Gamma_{\ref{thm:poincare_q_valued_ball}\eqref{item:poincare_q_valued_ball:lp}}
		( m, n, Q, q ), \\ \Gamma_4 = \besicovitch{\adim}^{1/q}
		(\varepsilon_0)^{-1} 3^{\vdim /q}, \quad
		\Gamma_5 = 2^{1/2} Q \codim^{1/2}, \quad
		\Gamma_6 = \Gamma_0 \codim^{1/2} Q^{1/2},
	\end{gather*}
	choose $S \in \qspace_Q ( \rel^{\codim } )$ such that (see
	Definition \ref{def:q_height_tilt})
	\begin{gather*}
		\heighti_{q^\ast} ( g, S ) \leq \Gamma_3 \, \tilti_q ( g ),
		\quad \spt S \subset \cball{\codim }{0}{h}
	\end{gather*}
	with the help of
	Theorem \ref{thm:poincare_q_valued_ball}\,\eqref{item:poincare_q_valued_ball:lp}
	noting again \cite[4.1.16]{MR41:1976} and denote by
	\begin{gather*}
		P:=( \density^0 ( \| S \|, \cdot ) \circ \sigma )
		\mathcal{H}^\vdim
	\end{gather*}
	the $Q$ valued plane associated to $S$ via $\sigma$. The estimate for
	$\height_{q^\ast} ( \mu \restrict G, 0, 1, h, P)$ is obtained by
	combining the following six inequalities:
	\begin{gather*}
		\height_{q^\ast} ( \mu
		\restrict G, 0, 1, h, P ) \leq \Gamma_2 \big (
		\heighti_{q^\ast} ( g, S ) + \mathcal{L}^\vdim  ( \cball{\vdim
		}{0}{1} \without Y )^{1/q} \big ), \\
		\heighti_{q^\ast} ( g, S )
		\leq \Gamma_3 \, \tilti_q ( g ), \\
		\label{eqn:poincare_lorentz:Y} \mathcal{L}^\vdim  (
		\cball{\vdim }{0}{1} \without Y)^{1/q} \leq (\Gamma_1)^{1/q}
		\, \mu ( B)^{1/q}, \\
		\mu ( B \cap A)^{1/q} \leq \Gamma_4 \, \tilt_q ( \mu, 0, 3, h
		+ 2, T), \\
		\tilti_q ( g | Y ) \leq
		\Gamma_5 \, \tilt_q ( \mu, 0, 1, h, T), \\
		\tilti_q ( g | \cball{\vdim }{0}{1} \without Y ) \leq \Gamma_6
		\, \mathcal{L}^\vdim  ( \cball{\vdim }{0}{1} \without
		Y)^{1/q}.
	\end{gather*}
	The first is implied by
	Lemma \ref{lemma:lipschitz_approximation_2}\,\eqref{item:lipschitz_approximation_2:def}\,\eqref{item:lipschitz_approximation_2:lip_related}\,\eqref{item:lipschitz_approximation_2:height_estimate}
	and $\spt S \subset \cball{\codim }{0}{h}$,
	the second is implied by the choice of $S$,
	the third is implied by
	Lemma \ref{lemma:lipschitz_approximation_2}\,\eqref{item:lipschitz_approximation_2:estimate},
	the sixth is elementary (cf. Almgren
	\cite[1.1\,(9)--(11)]{MR1777737}). To prove the fourth,
	note that for every $x \in B \cap A$ there exists $0 < \varrho < 2$
	such that
	\begin{gather*}
		\varepsilon_0 \, \measureball{\mu}{\cball{}{x}{\varrho}} <
		{\textstyle\int_{\cball{}{x}{\varrho}}} | \eqproject{T_\xi\mu}
		- \project{T} | \ud \mu ( \xi ),
	\end{gather*}
	hence by H{\"o}lder's inequality
	\begin{gather*}
		( \varepsilon_0 )^q \, \measureball{\mu}{\cball{}{x}{\varrho}}
		< {\textstyle\int_{\cball{}{x}{\varrho}}} | \eqproject{T_\xi
		\mu} - \project{T} |^q \ud \mu ( \xi )
	\end{gather*}
	and Besicovitch's covering theorem implies the inequality in question.
	Observing that
	\begin{gather*}
		\{ y \in Y \with | \ap A g (y) | > \gamma \} \without \pi \big
		( \{ \xi \in G \cap \pi^{-1} ( Y ) \with |
		\eqproject{T_\xi\mu} - \project{T} | > \gamma / \Gamma_5 \}
		\big )
	\end{gather*}
	has $\mathcal{L}^\vdim $ measure $0$ by
	Lemma \ref{lemma:lipschitz_approximation_2}\,\eqref{item:item:lipschitz_approximation_2:misc:apf}
	and Almgren \cite[1.1\,(9)--(11)]{MR1777737}, the fifth inequality is
	a consequence of
	\begin{gather*}
		\begin{aligned}
			& \phantom{\leq} ~~ \mathcal{L}^\vdim  ( \{ y \in Y
			\with | \ap A g ( y ) | > \gamma \} ) \\
			& \leq \mathcal{H}^\vdim  ( \{ \xi \in G \cap \pi^{-1}
			( Y ) \with | \eqproject{T_\xi\mu} - \project{T} | >
			\gamma / \Gamma_5 \} ) \\
			& \leq \mu ( \{ \xi \in G \cap \pi^{-1} ( Y ) \with |
			\eqproject{T_\xi\mu} - \project{T} | > \gamma /
			\Gamma_5 \} ).
		\end{aligned}
	\end{gather*}

	The proof of part \eqref{item:poincare_lorentz:k} exactly parallels
	the proof of part \eqref{item:poincare_lorentz:lp} with $\infty$ and
	Theorem \ref{thm:poincare_q_valued_ball}\,\eqref{item:poincare_q_valued_ball:k}
	replacing $q^\ast$ and
	Theorem \ref{thm:poincare_q_valued_ball}\,\eqref{item:poincare_q_valued_ball:lp}.
\end{proof}
\begin{remark} \label{remark:besicovitch}
	The $\mu$ measure of $\cylinder{a}{r}{h}{T} \without A$ could be
	estimated using Besicovitch's covering theorem as follows: If $\mu$
	satisfies $(H_p)$ with $1 \leq p \leq \vdim$, $\curv = \| \delta \mu
	\|$ if $p =1$ and $\curv = |  \meancurv{\mu} |^p \mu$ if $p
	> 1$, then
	\begin{gather*}
		\begin{aligned}
			& \mu ( \cylinder{a}{r}{h}{T} \without A ) \\
			& \qquad \leq \besicovitch{\adim} \varepsilon^{-\vdim
			p /(\vdim-p)} \curv ( \cylinder{a}{3r}{h+2r}{T}
			)^{\vdim/(\vdim-p)} \quad \text{if $p < \vdim$},
		\end{aligned} \\
		\cylinder{a}{r}{h}{T} \cap ( \spt \mu ) \without A = \emptyset
		\quad \text{if $p = \vdim$ and $\curv (
		\cylinder{a}{3r}{h+2r}{T}) \leq \varepsilon^\vdim$};
	\end{gather*}
	in fact if $x \in \cylinder{a}{r}{h}{T} \cap ( \spt \mu ) \without A$
	the definition of $A$ implies for some $0 < \varrho < 2r$ by
	H\"older's inequality
	\begin{gather*}
		\varepsilon
		\measureball{\mu}{\cball{}{x}{\varrho}}^{1-1/\vdim} <
		\measureball{\curv}{\cball{}{x}{\varrho}}^{1/p}
		\measureball{\mu}{\cball{}{x}{\varrho}}^{1-1/p}, \quad p <
		\vdim, \\
		\measureball{\mu}{\cball{}{x}{\varrho}} \leq
		\varepsilon^{-\vdim p/ (\vdim-p)}
		\measureball{\curv}{\cball{}{x}{\varrho}}^{\vdim/(\vdim-p)}.
	\end{gather*}
	Clearly, $A$ and $\varepsilon$ can be replaced by $G$ and
	$(2\isoperimetric{\vdim})^{-1}$.

	However, this estimate would not be sufficient to prove
	Theorem \ref{thm:limit_poincare} in the limiting case.
\end{remark}
\begin{remark} \label{remark:restriction}
	The term $\mu \restrict G$ cannot be replaced by $\mu$ neither in part
	\eqref{item:poincare_lorentz:lp} nor, if $\vdim > 1$, in part
	\eqref{item:poincare_lorentz:k} because otherwise the respective part
	of Theorem \ref{thm:limit_poincare} would hold with the condition $\alpha q_2
	\leq \vdim p/(\vdim-p)$ replaced by $\alpha q_1 \leq \vdim p /
	(\vdim-p)$ in part \eqref{item:limit_poincare:lpq} and $p = \vdim$
	replaced by $p > \vdim/2$ in part \eqref{item:limit_poincare:m} which
	is not the case by \cite[1.2]{snulmenn.isoperimetric}, see
	Remark \ref{remark:limit_poincare}.

	On the other hand one readily infers from the definition of the
	$q^\ast$ height that
	\begin{gather*}
		\begin{aligned}
			& \height_{q^\ast} ( \mu, a, r, h, Q, T ) \\
			& \qquad \leq \height_{q^\ast} ( \mu \restrict G, a,
			r, h, Q, T ) + ( 2 h/r + \unitmeasure{\vdim}^{1/\vdim}
			) ( r^{-\vdim} \mu ( \cylinder{a}{r}{h}{T} \without G
			) )^{1/q^\ast}.
		\end{aligned}
	\end{gather*}
\end{remark}
\begin{remark} \label{remark:lorentz1}
	Part \eqref{item:poincare_lorentz:k} can be sharpened using Lorentz
	spaces to
	\begin{gather*}
		\begin{aligned}
			& \height_\infty ( \mu \restrict G, a, r, h, Q, T ) \\
			& \qquad \leq \Gamma \big ( \tilt_{\vdim ,1} ( \mu, a,
			3r, h + 2r, T ) + ( r^{-\vdim } \mu ( \cylinder {a}{
			r}{ h}{T} \without A))^{1/\vdim } \big)
		\end{aligned}
	\end{gather*}
	with a positive, finite number $\Gamma$ depending only on $m$, $n$,
	$Q$, $M$, and $\delta$, see Stein \cite[p.~385]{MR607898}. Here
	$\tilt_{\vdim ,1}$ is the obvious generalisation of $\tilt_q$ to
	Lorenz spaces.

	A similar improvement is possible for part
	\eqref{item:poincare_lorentz:lp} using embeddings obtainable from
	\cite[Lemma 7.14]{MR1814364} and estimates for convolutions (cf.
	O'Neil \cite{MR0146673}).

	The proofs of the preceding theorem and of
	Lemma \ref{lemma:lipschitz_approximation_2}\,\eqref{item:lipschitz_approximation_2:height_estimate}
	have been carefully chosen to facilitate the extension to Lorentz
	spaces. The only significant difference is the estimate of the
	auxiliary function $u$ occuring in the proof of
	Lemma \ref{lemma:lipschitz_approximation_2}\,\eqref{item:lipschitz_approximation_2:height_estimate}
	which has to be replaced by $\| u \|_{\Lp{s^\ast,1} (
	\mathcal{L}^\vdim )} \leq \Gamma \mathcal{L}^\vdim ( \bigcup \{
	\cball{}{\zeta_i}{t_i} \with i \in I \} )^{1/s}$ for $1 \leq s <
	\vdim$, $s^\ast = s\vdim/(\vdim-s)$ and some positive, finite number
	$\Gamma$ depending only on $s$ and $\vdim$. Assuming $I$ finite and
	$\{ \cball{}{x_i}{2t_i} \with i \in I \}$ to be disjointed, $u/2$ is
	dominated by the Lipschitzian function with compact support mapping $x
	\in \rel^\vdim$ onto $\sum_{i \in I} \max \{ 0 , t_i- \dist
	(x,\cball{}{x_i}{t_i}) \}$ to which the above mentioned embedding
	results can be applied to yield the estimate in question.
\end{remark}
\begin{corollary} \label{corollary:sobolev_poincare}
	Suppose $m, n, Q \in \nat$, $1 \leq M < \infty$, $0 < \delta \leq 1$,
	$a \in \rel^\adim $, $0 < r < \infty$, $T \in \grass{\adim }{\vdim }$,
	$1 \leq p \leq \vdim $, $\mu$ is an integral $\vdim $ varifold in an
	open superset of $\cylinder {a}{ 3r}{ 3r}{ T }$ satisfying
	\eqref{eqn_hp} and
	\begin{gather*}
		\curv = \| \delta \mu \| \quad \text{if $p=1$}, \quad \curv =
		| \meancurv{\mu} |^p \mu \quad \text{if $p > 1$}, \\
		( Q - 1 + \delta ) \unitmeasure{\vdim } r^\vdim  \leq \mu (
		\cylinder {a}{r}{r}{T} ) \leq ( Q + 1 - \delta )
		\unitmeasure{\vdim } r^\vdim , \\
		\mu ( \cylinder {a}{r}{(1+\delta) r}{ T} \without \cylinder
		{a}{r}{(1-\delta)r}{ T}) \leq ( 1 - \delta )
		\unitmeasure{\vdim } r^\vdim , \\
		\mu ( \cylinder {a}{ 3r}{ 3r}{ T } ) \leq M \unitmeasure{\vdim
		} r^\vdim .
	\end{gather*}

	Then the following two statements hold:
	\begin{enumerate}
		\item \label{item:poincare:q} If $p < \vdim $, $1 \leq q <
		\vdim $, then
		\begin{multline*}
			\qquad \quad \height_{\frac{\vdim q}{\vdim -q}} ( \mu
			, a, r, r, Q, T) \\
			\leq \Gamma_{\eqref{item:poincare:q}} \big ( \tilt_q (
			\mu, a, 3r, 3r, T ) + ( r^{p-\vdim } \curv ( \cylinder
			{ a}{ 3r}{ 3r}{ T }))^{\frac{\vdim -q}{q(\vdim -p)}}
			\big )
		\end{multline*}
		where $\Gamma_{\eqref{item:poincare:q}}$ is a positive, finite
		number depending only on $m$, $n$, $Q$, $M$, $\delta$, $p$,
		and $q$.
		\item \label{item:poincare:m} If $p =\vdim $ and $\curv (
		\cylinder { a}{ 3r}{ 3r}{ T } ) \leq
		\varepsilon_{\eqref{item:poincare:m}}$ where
		$\varepsilon_{\eqref{item:poincare:m}}$ is a positive, finite
		number depending only on $m$, $n$, $Q$, $M$, and $\delta$,
		then
		\begin{enumerate}
			\item \label{item:item:poincare:m:q}
			$\height_{\frac{\vdim q}{\vdim -q}} ( \mu, a, r, r, Q,
			T ) \leq \Gamma_{\eqref{item:item:poincare:m:q}} \,
			\tilt_q ( \mu, a, 3r, 3r, T )$ whenever $1 \leq q <
			\vdim $,
			\item \label{item:item:poincare:m:qq}
			$\height_{\infty} ( \mu, a, r, r, Q, T ) \leq
			\Gamma_{\eqref{item:item:poincare:m:qq}} \, \tilt_q (
			\mu, a , 3r, 3r, T )$ whenever $\vdim  < q \leq
			\infty$
		\end{enumerate}
		where $\Gamma_{\eqref{item:item:poincare:m:q}}$,
		$\Gamma_{\eqref{item:item:poincare:m:qq}}$ are positive,
		finite numbers depending only on $m$, $n$, $Q$, $M$, $\delta$,
		and $q$.
	\end{enumerate}
\end{corollary}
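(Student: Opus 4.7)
The plan is to combine Theorem \ref{thm:sobolev_poincare} with the Besicovitch covering bound of Remark \ref{remark:besicovitch} and the restriction comparison of Remark \ref{remark:restriction}. I would fix a threshold $\eta > 0$ to be determined and distinguish two cases. In the favourable case both $\tilt_q(\mu, a, 3r, 3r, T) \leq \eta$ and $r^{p-\vdim}\curv(\cylinder{a}{3r}{3r}{T}) \leq \eta$. H\"older's inequality together with the mass hypothesis then yields $\tilt_1(\mu, a, 3r, 3r, T) \leq \Gamma_1 \eta$ and $\|\delta\mu\|(\cylinder{a}{3r}{3r}{T}) \leq \Gamma_2 \eta^{1/p} r^{\vdim - 1}$, using $\|\delta\mu\| \leq \curv^{1/p}\mu^{1-1/p}$ from H\"older when $p > 1$ and the identity $\|\delta\mu\| = \curv$ when $p = 1$. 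For $\eta$ small enough the hypotheses of Theorem \ref{thm:sobolev_poincare} applied with $h = r$ are then met, and for $1 \leq q < \vdim$ the theorem gives
\begin{gather*}
\height_{q^\ast}(\mu \restrict G, a, r, r, Q, T) \leq \Gamma_3 \bigl ( \tilt_q(\mu, a, 3r, 3r, T) + (r^{-\vdim}\mu(\cylinder{a}{r}{r}{T} \without A))^{1/q} \bigr )
\end{gather*}
where $q^\ast = \vdim q/(\vdim - q)$; Remark \ref{remark:restriction} with $h = r$ then upgrades this to a bound on the full $\height_{q^\ast}(\mu, a, r, r, Q, T)$ at the cost of adding a constant multiple of $(r^{-\vdim}\mu(\cylinder{a}{r}{r}{T} \without G))^{1/q^\ast}$.

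Next I would apply Remark \ref{remark:besicovitch} both to $A$ as stated and with $G$ in place of $A$: each bad-set mass is bounded by a constant times $\curv(\cylinder{a}{3r}{3r}{T})^{\vdim/(\vdim - p)}$. A direct exponent calculation, using $(p - \vdim)/(\vdim - p) = -1$ and $1/q^\ast = (\vdim - q)/(\vdim q)$, converts these into
\begin{gather*}
(r^{-\vdim}\mu(\cylinder{a}{r}{r}{T} \without A))^{1/q} \leq \Gamma_4 (r^{p-\vdim}\curv(\cylinder{a}{3r}{3r}{T}))^{\vdim/(q(\vdim-p))}, \\
(r^{-\vdim}\mu(\cylinder{a}{r}{r}{T} \without G))^{1/q^\ast} \leq \Gamma_5 (r^{p-\vdim}\curv(\cylinder{a}{3r}{3r}{T}))^{(\vdim-q)/(q(\vdim-p))}.
\end{gather*}
The first exponent exceeds the second by $1/(\vdim - p)$, so since $r^{p-\vdim}\curv \leq \eta \leq 1$ the first expression is dominated by the second; assembling the three displays produces \eqref{item:poincare:q} in this case. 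In the unfavourable case one of $\tilt_q > \eta$ or $r^{p-\vdim}\curv > \eta$ holds and the right hand side of \eqref{item:poincare:q} is bounded below by a positive constant depending only on $\eta$, $p$, $q$. On the other hand, taking in the infimum defining $\height_{q^\ast}(\mu, a, r, r, Q, T)$ the $Q$ valued plane $P$ associated to $S = \sum_{i=1}^Q \Lbrack \perpproject{T}(a) \Rbrack \in \qspace_Q(T^\perp)$ together with $Y = \emptyset$, the mass hypothesis yields a uniform upper bound depending only on $M$, $m$, $n$, $q$. Enlarging $\Gamma_{\eqref{item:poincare:q}}$ accordingly makes the inequality hold trivially.

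Part \eqref{item:poincare:m}, where $p = \vdim$, is handled analogously. The second conclusion of Remark \ref{remark:besicovitch} ensures $\cylinder{a}{r}{r}{T} \cap \spt\mu \without A = \emptyset$ as soon as $\curv(\cylinder{a}{3r}{3r}{T}) \leq \varepsilon^\vdim$; after arranging $\varepsilon \leq (2\isoperimetric{\vdim})^{-1}$ so that $A \subset G$, both bad-set masses in the theorem and in Remark \ref{remark:restriction} vanish. Splitting only on $\tilt_q$ and applying Theorem \ref{thm:sobolev_poincare}\eqref{item:poincare_lorentz:lp} for \eqref{item:item:poincare:m:q} and Theorem \ref{thm:sobolev_poincare}\eqref{item:poincare_lorentz:k} for \eqref{item:item:poincare:m:qq} in the small-tilt regime, and invoking the analogous trivial height bound in the large-tilt regime, yields the desired conclusions. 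The only piece demanding genuine care is the exponent arithmetic isolating the dominant curvature term in the claimed form; the rest is a direct assembly of the tools above.
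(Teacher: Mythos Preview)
Your proof is correct and follows essentially the same approach as the paper's own proof: reduce to the regime where the right-hand side is small via a trivial height bound, use H\"older's inequality to verify the smallness hypotheses of Theorem~\ref{thm:sobolev_poincare} with $h=r$, and then combine that theorem with Remarks~\ref{remark:besicovitch} and~\ref{remark:restriction}. The paper compresses the argument into a few lines, whereas you spell out the exponent bookkeeping (in particular the observation that $\vdim/(q(\vdim-p))$ exceeds $(\vdim-q)/(q(\vdim-p))$ by $1/(\vdim-p)$, so the $A$-term is absorbed by the $G$-term when $r^{p-\vdim}\curv \leq 1$), but there is no substantive difference in strategy.
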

\begin{proof}
	To prove part \eqref{item:poincare:q}, assume $a=0$, $r=1$, define
	$q^\ast = \vdim q/(\vdim -q)$, and suppose that $\varepsilon =
	\varepsilon_{\ref{thm:sobolev_poincare}} ( \codim, \vdim, Q, M, \delta
	)$. One only needs to consider the case that the right hand side is
	sufficiently small such that, using H\"older's inequality,
	\begin{gather*}
		\| \delta \mu \| ( \cylinder{a}{3}{3}{T} ) \leq \varepsilon
		r^{\vdim -1}, \quad \tilt_1 ( \mu, a, 3, 3, T) \leq
		\varepsilon,
	\end{gather*}
	since
	\begin{gather*}
		\height_{q^\ast} ( \mu, 0, 1, 1, Q, T ) \leq \mu ( \cylinder {
		0}{ 1}{ 1}{ T } )^{1/q^\ast} + \unitmeasure{\vdim }^{1/q} \leq
		M^{1/q^\ast} \unitmeasure{\vdim }^{1/q^\ast} +
		\unitmeasure{\vdim }^{1/q}.
	\end{gather*}
	The conclusion then follows from
	Theorem \ref{thm:sobolev_poincare}\,\eqref{item:poincare_lorentz:lp} in
	conjunction with Remarks \ref{remark:besicovitch} and
	\ref{remark:restriction}.

	Part \eqref{item:poincare:m} is proved similarly using
	Theorem \ref{thm:sobolev_poincare}\,\eqref{item:poincare_lorentz:k}.
\end{proof}
\begin{remark}
	In case $\mu$ additionally satisfies
	\begin{gather*}
		\mu ( \{ x \in \cylinder { a}{ r}{ r}{ T } \with
		\density^\vdim ( \mu, x ) = Q \} ) \geq \delta
		\unitmeasure{\vdim} r^\vdim,
	\end{gather*}
	there exists $z \in T^\perp$ such that for $P := Q \mathcal{H}^m
	\restrict \{ x \in \rel^\adim \with \perpproject{T} (x) = z \}$
	\begin{gather*}
		H_{\frac{nq}{n-q}} ( \mu, a, r, r, P ) \leq \Gamma \big ( T_q
		( \mu, a, 3r, 3r, T ) + ( r^{p-n} \curv ( \cylinder { a}{ 3r}
		{3r}{ T } ))^{\frac{n-q}{q(n-p)}} \big )
	\end{gather*}
	provided $p < \vdim$, $1 \leq q < \vdim$ where $\Gamma$ is a positive,
	finite number depending only on $m$, $n$, $Q$, $M$, $\delta$, $p$, and
	$q$.

	In fact from
	Lemma \ref{lemma:lipschitz_approximation_2}\,\eqref{item:lipschitz_approximation_2:def}\,\eqref{item:lipschitz_approximation_2:estimate}
	and the coarea formula \cite[3.2.22\,(3)]{MR41:1976} one obtains for
	the set $Y_0$ of all $y \in T \cap \oball{}{\project{T}(a)}{r}$ such
	that for some $x_0 \in \cylinder { a}{ r}{ r}{ T }$ with $\project{T}
	(x_0) = y$
	\begin{gather*}
		\density^\vdim ( \mu, x_0 ) = Q, \qquad \density^\vdim ( \mu,
		x ) = 0 \quad \text{for $x \in \project{T}^{-1} ( \{ y \} )
		\cap \cylinder { a}{ r}{ r}{ T } \without \{ x_0 \}$}
	\end{gather*}
	the estimate
	\begin{gather*}
		\mathcal{L}^1 ( Y_0 ) \geq ( 2 \delta / 3 ) \unitmeasure{\vdim}
		r^n
	\end{gather*}
	provided the right hand side of the inequality in question is suitably
	small (depending only on $m$, $n$, $Q$, $M$, $\delta$, $p$, and $q$),
	hence for any $Q$ valued plane $P'$ parallel to $T$ such that
	\begin{gather*}
		( 2 H_{\frac{nq}{n-q}} ( \mu, a, r, r, P' ) )^q \leq ( \delta /
		3 ) \unitmeasure{\vdim}
	\end{gather*}
	there holds
	\begin{gather*}
		( ( \delta/3 ) \unitmeasure{\vdim} )^{1/q-1/n} \frac{\diam
		\perpproject{T} ( \spt P' )}{2r} \leq 2 H_{\frac{nq}{n-q}}
		( \mu, a, r, r, P' )
	\end{gather*}
	and suitable $z$ and $\Gamma$ are readily constructed.

	A similar remark holds for the second part.
\end{remark}
\begin{remark}
	Suppose $\codim = 1$, $\vdim = 2$, $Q = 1$, $a = 0$, $\delta = 1/4$,
	\begin{gather*}
		\begin{aligned}
			T & = \bigsetclassification{\rel^3}{(x_1,x_2,x_3)}{x_3
			= 1/2}, \\
			N & =
			\bigsetclassification{\rel^3}{(x_1,x_2,x_3)}{\cosh x_3
			= ( x_1^2 + x_2^2)^{1/2}},
		\end{aligned}
	\end{gather*}
	$\mu = \mathcal{H}^2 \restrict ( T \cup N )$ and $r$ slightly larger
	than $1$. It is a classical fact that the catenoid $N$ is stationary,
	i.e. $\delta ( \mathcal{H}^2 \restrict N ) = 0$, hence $\delta \mu =
	0$. Therefore, considering the limit $r \pluslim{1}$, one notes that
	$\tilt_q ( \mu, a , 3r, 3r, T )$ cannot be replaced by $\tilt_q ( \mu,
	a , r, r, T )$ in the conclusion. It is not known to the author if
	such kind of behaviour can be excluded by introducing a smallness
	assumption on $\tilt_q ( \mu, a, 3r, 3r, T )$.
\end{remark}
\begin{theorem} \label{thm:limit_poincare}
	Suppose $m, n, Q \in \nat$, $0 < \hoelder \leq 1$, $1
	\leq p \leq \vdim $, $U$ is an open subset of $\rel^\adim $, and $\mu$
	is an integral $\vdim $ varifold in $U$ satisfying \eqref{eqn_hp}.

	Then the following two statements hold:
	\begin{enumerate}
		\item \label{item:limit_poincare:lpq} If $p < \vdim $, $1 \leq
		q_1 < \vdim $, $1 \leq q_2 \leq \min \{ \frac{\vdim q_1}{\vdim
		-q_1}, \frac{1}{\hoelder} \cdot \frac{\vdim p}{\vdim -p} \}$,
		then for $\mu$ almost all $a \in U$ with $\density^\vdim  (
		\mu, a ) = Q$ there holds
		\begin{gather*}
			\limsup_{r \pluslim{0}} r^{-\hoelder-1-\vdim /q_2} \|
			\dist ( \cdot-a, T_a \mu ) \|_{\Lp{q_2} ( \mu
			\restrict \oball{}{a}{r} )} \\
			\leq \Gamma_{\eqref{item:limit_poincare:lpq}} \limsup_{r
			\pluslim{0}} r^{-\hoelder-\vdim /q_1} \| T_\mu -
			\eqproject{T_a\mu} \|_{\Lp{q_1} ( \mu \restrict
			\oball{}{a}{r} )}
		\end{gather*}
		where $\Gamma_{\eqref{item:limit_poincare:lpq}}$ is a
		positive, finite number depending only on $m$, $n$, $Q$,
		$q_1$, and $q_2$.
		\item \label{item:limit_poincare:m} If $p = \vdim $, $\vdim  <
		q \leq \infty$, then for $\mu$ almost all $a \in U$ with
		$\density^\vdim ( \mu, a ) = Q$ there holds
		\begin{gather*}
			\limsup_{r \pluslim{0}} r^{-\hoelder-1} \| \dist (
			\cdot-a, T_a\mu ) \|_{\Lp{\infty} ( \mu \restrict
			\oball{}{a}{r})} \\
			\leq \Gamma_{\eqref{item:limit_poincare:m}} \limsup_{r
			\pluslim{0}} r^{-\hoelder-\vdim /q} \| T_\mu -
			\eqproject{T_a\mu} \|_{\Lp{q} ( \mu \restrict
			\oball{}{a}{r} )}
		\end{gather*}
		where $\Gamma_{\eqref{item:limit_poincare:m}}$ is a positive,
		finite number depending only on $m$, $n$, $Q$, and $q$.
	\end{enumerate}
\end{theorem}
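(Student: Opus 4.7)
The plan is to apply Theorem \ref{thm:sobolev_poincare} at a dyadic sequence of scales $r_k := 2^{-k} r_0$ centered at a generic $a$ and then iterate across scales so as to identify the resulting sequence of approximating $Q$ valued planes with $Q \mathcal{H}^\vdim \restrict ( a + T_a \mu )$.

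First I select good points. For $\mu$-almost every $a$ with $\density^\vdim ( \mu, a ) = Q$, the approximate tangent plane $T := T_a \mu$ exists, $\mu ( \oball{}{a}{r} ) = ( Q + o(1) ) \unitmeasure{\vdim} r^\vdim$ with $\tiltex_\mu ( a, r, T ) \to 0$ as $r \pluslim{0}$ (Brakke), the Radon--Nikodym derivative $D ( \curv, \mu, a )$ is finite (where $\curv := \| \delta \mu \|$ if $p = 1$ and $\curv := | \meancurv{\mu} |^p \mu$ otherwise, so $\curv ( \oball{}{a}{r} ) = O ( r^\vdim )$), and the hypothesized quantity $L(a) := \limsup_{r \pluslim{0}} r^{-\hoelder-\vdim/q_1} \| T_\mu - \project{T} \|_{\Lp{q_1} ( \mu \restrict \oball{}{a}{r} )}$ is finite. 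At such $a$, the quantitative hypotheses of Theorem \ref{thm:sobolev_poincare} on $\cylinder{a}{r}{r}{T}$ are satisfied for all sufficiently small $r > 0$, with universal $\delta$ and $M = 2 Q$.

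Next I set $q := \min \{ q_1, q_2 \} \in [1, \vdim )$; by H\"older's inequality and $\mu ( \oball{}{a}{r} ) = O(r^\vdim)$ the $\Lp{q_1}$ tilt bound implies a tilt bound in $\Lp{q}$ with the same $r^\hoelder$ rate. Theorem \ref{thm:sobolev_poincare}\,\eqref{item:poincare_lorentz:lp} applied at scale $r_k$ produces a $Q$ valued plane $P_k$ parallel to $T$ with
\begin{gather*}
	r_k^{-1-\vdim/q^\ast} \| \dist ( \cdot, \spt P_k ) \|_{\Lp{q^\ast} ( \mu \restrict \cylinder{a}{r_k}{r_k}{T} )} \\
	\leq \Gamma \Bigl( \tilt_q ( \mu, a, 3r_k, 3r_k, T ) + \bigl( r_k^{-\vdim} \mu ( \cylinder{a}{r_k}{r_k}{T} \without A_k ) \bigr)^{1/q} \Bigr),
\end{gather*}
where $q^\ast := \vdim q / ( \vdim - q ) \geq q_2$. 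Remark \ref{remark:besicovitch} together with $\curv ( \oball{}{a}{3 r_k} ) = O ( r_k^\vdim )$ gives $\bigl( r_k^{-\vdim} \mu ( \cylinder{a}{r_k}{r_k}{T} \without A_k ) \bigr)^{1/q} = O ( r_k^{\vdim p / ( ( \vdim - p ) q )} )$, while the first summand is $O ( r_k^{\hoelder} )$. Since $q \leq q_2$ and the hypothesis $\hoelder q_2 \leq \vdim p / ( \vdim - p )$ yields $\hoelder q \leq \vdim p / ( \vdim - p )$, the normalized height $h_k := r_k^{-\hoelder-1-\vdim/q^\ast} \| \dist ( \cdot, \spt P_k ) \|_{\Lp{q^\ast} ( \mu \restrict \cylinder{a}{r_k}{r_k}{T} )}$ is bounded by $\Gamma' ( L ( a ) + 1 )$ for all large $k$.

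Finally I iterate. Comparing $P_k$ and $P_{k+1}$ on $\cylinder{a}{r_{k+1}}{r_{k+1}}{T}$ using $\mu ( \cylinder{a}{r_{k+1}}{r_{k+1}}{T} ) \asymp \unitmeasure{\vdim} r_{k+1}^\vdim$ gives $\mathcal{G} ( P_k, P_{k+1} ) \leq \Gamma'' r_{k+1}^{\hoelder+1} ( h_k + h_{k+1} )$, whence $(P_k)$ is Cauchy in the metric $\mathcal{G}$ with limit $P_\infty$ parallel to $T$ satisfying $\mathcal{G} ( P_K, P_\infty ) = O ( r_K^{\hoelder+1} )$. As $\mu$ is asymptotic to $Q \mathcal{H}^\vdim \restrict ( a + T )$ at $a$ by the density hypothesis, $P_\infty$ must equal $Q \mathcal{H}^\vdim \restrict ( a + T )$: any other $Q$ valued plane parallel to $T$ would produce $\| \dist ( \cdot, \spt P_\infty ) \|_{\Lp{q^\ast} ( \mu \restrict \cylinder{a}{r_k}{r_k}{T} )}$ of order at least $r_k^{\vdim/q^\ast}$, contradicting the uniform bound on $h_k$. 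Combining with the Cauchy estimate,
\begin{gather*}
	r_k^{-\hoelder-1-\vdim/q^\ast} \| \dist ( \cdot - a, T ) \|_{\Lp{q^\ast} ( \mu \restrict \cylinder{a}{r_k}{r_k}{T} )} \leq h_k + \Gamma'' \sup_j h_j,
\end{gather*}
and H\"older's inequality converts the $\Lp{q^\ast}$ bound into an $\Lp{q_2}$ bound (since $q_2 \leq q^\ast$) at the cost of a universal constant. The inclusion $\oball{}{a}{r} \subset \cylinder{a}{r_k}{r_k}{T}$ for $r_{k+1} \leq r \leq r_k$ upgrades the estimate from dyadic to arbitrary small radii. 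Part \eqref{item:limit_poincare:m} is proved in parallel using Theorem \ref{thm:sobolev_poincare}\,\eqref{item:poincare_lorentz:k}. The most delicate point is the scale iteration and the identification $P_\infty = Q \mathcal{H}^\vdim \restrict ( a + T_a \mu )$, which makes essential use of $\density^\vdim ( \mu, a ) = Q$.
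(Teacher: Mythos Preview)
Your overall architecture (apply Theorem~\ref{thm:sobolev_poincare} at dyadic scales, telescope the planes, identify the limit with $Q\,\mathcal{H}^\vdim\restrict(a+T_a\mu)$) is the same as the paper's. There are, however, two genuine gaps.

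\textbf{The curvature term does not produce the stated inequality in the limiting case.} Theorem~\ref{thm:sobolev_poincare}\,\eqref{item:poincare_lorentz:lp} bounds the height of $\mu\restrict G$, not of $\mu$ (see Remark~\ref{remark:restriction}); you have silently written $\mu\restrict\cylinder{a}{r_k}{r_k}{T}$ in the definition of $h_k$. The passage from $\mu\restrict G$ to $\mu$ costs an extra $(r_k^{-\vdim}\mu(\cylinder{a}{r_k}{r_k}{T}\without G))^{1/q^\ast}$, and it is precisely this term (with exponent $1/q^\ast=1/q_2$, not $1/q$) that becomes critical. Using Remark~\ref{remark:besicovitch} together with $\curv(\oball{}{a}{r})=O(r^\vdim)$ yields only $\mu(\,\cdot\,\without G)=O(r^{\vdim^2/(\vdim-p)})$, so that contribution is $O(r_k^{\vdim p/((\vdim-p)q_2)})$. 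When $\hoelder q_2=\vdim p/(\vdim-p)$ this is $O(r_k^{\hoelder})$ but not $o(r_k^{\hoelder})$, and the implied constant depends on the Radon--Nikodym derivative $D(\curv,\mu,a)$, not on $L(a)$. Your bound $h_k\leq\Gamma'(L(a)+1)$ reflects exactly this: it does not give $\Gamma\,L(a)$ as the theorem asserts (consider $L(a)=0$). The paper flags this explicitly in the remark preceding Corollary~\ref{corollary:sobolev_poincare} and avoids it by invoking \cite[2.9,\,2.10]{snulmenn.isoperimetric}, which furnishes at $\mu$-almost every $a$ an index $i$ with
\[
\lim_{r\pluslim{0}} r^{-\vdim^2/(\vdim-p)}\,\mu\big(\cball{}{a}{r}\without C_i\big)=0,
\]
where $C_i$ is the set of points at which the isoperimetric ratio is $\leq\varepsilon$ down to scale $1/i$. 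This genuine $o$ (rather than $O$) is what makes both the $(\,\cdot\,\without A)^{1/q}$ term in Theorem~\ref{thm:sobolev_poincare} and the residual $\|\dist(\cdot-a,T)\|_{\Lp{q_2}(\mu\restrict\oball{}{a}{\varrho}\without G_\varrho(a))}$ negligible against $\gamma\varrho^{\hoelder}$ for any $\gamma>L(a)$.

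\textbf{The plane comparison needs more than the first summand of the height.} From $\|\dist(\cdot,\spt P_k)\|_{\Lp{q^\ast}(\mu)}$ alone you control only the supports of $P_k,P_{k+1}$ near $\spt\mu$, not the $Q$-valued metric $\mathcal{G}(P_k,P_{k+1})$ (sheets of $P_k$ could coalesce or separate without affecting $\dist(\cdot,\spt P_k)$ on $\spt\mu$). The paper's $\height_{q^\ast}$ in Definition~\ref{def:vari_height} carries the second summand $\|g_\varrho\|_{\Lp{q^\ast}}$ for exactly this reason: integrating $\mathcal{G}(R_\varrho,R_{\varrho/2})$ over a set $B_\varrho\subset Y_\varrho\cap Y_{\varrho/2}$ where the $\mu$-fibers agree, one obtains $\mathcal{G}(P_\varrho,P_{\varrho/2})\leq\Gamma\,\varrho^{1+\hoelder}\gamma$, which then telescopes. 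Your comparison step should be rewritten to use the full $\height_{q^\ast}$ output of Theorem~\ref{thm:sobolev_poincare}, not just its first summand.
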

\begin{proof}
	For $a \in \rel^\adim $, $0 < r < \infty$ such that $\oball{}{a}{7r}
	\subset U$ denote by $G_r(a)$ the set of all $x \in \cball{}{a}{5r}
	\cap \spt \mu$ satisfying
	\begin{gather*}
		\measureball{\| \delta \mu \|}{\cball{}{x}{\varrho}} \leq ( 2
		\isoperimetric{\vdim } )^{-1} \mu ( \cball{}{x}{\varrho}
		)^{1-1/\vdim } \quad \text{whenever $0 < \varrho < 2r$}.
	\end{gather*}

	To prove \eqref{item:limit_poincare:lpq}, one may assume first that
	$q_2 \geq \vdim /(\vdim -1)$ possibly replacing $q_2$ by a larger
	number since $\min \{ \frac{\vdim q_1}{\vdim -q_1} ,
	\frac{1}{\hoelder} \cdot \frac{\vdim p}{\vdim -p} \} \geq \frac{\vdim
	}{\vdim -1}$, and thus also that $q_2 = \vdim q_1/(\vdim -q_1)$
	possibly replacing $q_1$ by a smaller number.  Define $M = 6^\vdim Q$,
	$\delta = 1/2$, $q=q_1$, $q^\ast = q_2$,
	\begin{gather*}
		\varepsilon = \min \{
		\varepsilon_{\ref{thm:sobolev_poincare}} ( m, n, Q, M,
		\delta ), ( 2 \isoperimetric{\vdim } )^{-1} \}, \quad \Gamma_1=
		\Gamma_{\ref{thm:sobolev_poincare}\eqref{item:poincare_lorentz:lp}}
		( m, n, Q, M, \delta, q ).
	\end{gather*}
	Denote by $C_i$ for $i \in \nat$ the set of all $x \in \spt \mu$ such
	that $\oball{}{x}{1/i} \subset U$ and
	\begin{gather*}
		\measureball{\| \delta \mu \|}{\cball{}{x}{\varrho}} \leq
		\varepsilon \, \mu ( \cball{}{x}{\varrho} )^{1-1/\vdim } \quad
		\text{whenever $0 < \varrho < 1/i$}.
	\end{gather*}
	The conclusion will be shown for $a \in \dmn T_\mu$ such that
	\begin{gather*}
		\density^\vdim  ( \mu, a ) = Q, \quad \density^{\vdim -1} ( \|
		\delta \mu \|, a ) = 0, \\
		\lim_{r \pluslim{0}} r^{-\vdim ^2/(\vdim -p)} \mu (
		\cball{}{x}{r} \without C_i) = 0 \quad \text{for some $i \in
		\nat$}.
	\end{gather*}
	Note that according to \cite[2.9.5]{MR41:1976} and \cite[2.9,
	2.10]{snulmenn.isoperimetric}
	with $s$ replaced by $\vdim $ this is true for $\mu$ almost all $a \in
	U$ with $\density^\vdim  ( \mu , a ) = Q$, fix such $a$, $i$, and
	abbreviate $T:=T_a\mu$.

	For $a$ there holds
	\begin{gather*}
		\lim_{r \pluslim{0}} \frac{\mu ( \cylinder {a}{ r}{ r}{
		T})}{\unitmeasure{\vdim } r^\vdim } = Q, \\
		\lim_{r \pluslim{0}} \frac{\mu ( \cylinder {a}{ r}{ 3r/2}{ T}
		\without \cylinder {a}{r}{r/2}{ T })}{\unitmeasure{\vdim }
		r^\vdim } = 0
	\end{gather*}
	and one can assume for some $0 < \gamma < \infty$
	\begin{gather*}
		\limsup_{r \pluslim{0}} r^{-\hoelder} \tilt_q ( \mu , a, r, r,
		T) < \gamma.
	\end{gather*}
	Noting $q \leq q^\ast \leq \frac{1}{\alpha} \cdot \frac{\vdim
	p}{\vdim-p}$, one chooses $0 < s < \min \{ (2i)^{-1}, \dist ( a,
	\rel^\adim \without U)/ 7 \} $ so small that for $0 < \varrho < s$
	\begin{gather*}
		( Q-1/2 ) \unitmeasure{\vdim } \varrho^\vdim  \leq \mu (
		\cylinder { a}{ \varrho}{ \varrho}{ T } ) \leq ( Q + 1/2 )
		\unitmeasure{\vdim } \varrho^\vdim , \\
		\mu ( \cylinder { a}{ \varrho}{ 3\varrho/2}{ T } \without
		\cylinder { a}{ \varrho}{ \varrho/2}{ T } ) \leq ( 1/2)
		\unitmeasure{\vdim } \varrho^\vdim , \\
		\mu ( \cylinder {a}{3\varrho}{3\varrho}{ T } ) \leq
		\measureball{\mu}{\cball{}{a}{5\varrho}} \leq
		\unitmeasure{\vdim } 6^\vdim  Q \varrho^\vdim , \\
		\| \delta \mu \| ( \cylinder { a}{ 3\varrho}{ 3 \varrho}{ T} )
		\leq \varepsilon \varrho^{\vdim -1}, \quad \tilt_1 ( \mu, a,
		3\varrho, 3\varrho, T ) \leq \varepsilon, \\
		\tilt_q ( \mu, a , 3 \varrho, 3\varrho, T ) + (
		\varrho^{-\vdim } \mu ( \cylinder {a}{ \varrho}{ \varrho}{ T}
		\without C_i ))^{1/q} \leq 4 \gamma \varrho^\hoelder;
	\end{gather*}
	in particular
	Theorem \ref{thm:sobolev_poincare}\,\eqref{item:poincare_lorentz:lp} can be
	applied to any such $\varrho$ with $r$, $h$ replaced by
	$\varrho$, $\varrho$. Also note that $G_\varrho (a) \cap
	\cylinder{a}{\varrho}{\varrho}{T}$ equals the set $G$ defined in
	Theorem \ref{thm:sobolev_poincare} with $r$, $h$ replaced by $\varrho$,
	$\varrho$ for $0 < \varrho < s$. For each $0 < \varrho < s$ use
	Remark \ref{remark:infima} to choose a $Q$ valued plane $P_\varrho$ parallel
	to $T$ such that
	\begin{gather*}
		\height_{q^\ast} ( \mu \restrict G_\varrho (a), a, \varrho,
		\varrho, P_\varrho ) \leq 2 \height_{q^\ast} ( \mu \restrict
		G_\varrho (a), a, \varrho, \varrho, Q, T ),
	\end{gather*}
	denote by $A_\varrho$ the $\mathcal{H}^\vdim $ measurable set of all
	$x \in T \cap \cball{}{\project{T}(a)}{\varrho}$ such that for some
	$R_\varrho (x), S_\varrho (x) \in \qspace_Q ( \rel^\adim  )$
	\begin{align*}
		\| R_\varrho (x) \| & = \density^\vdim  ( P_\varrho \restrict
		\cylinder { a}{ \varrho}{ \varrho}{ T }, \cdot ) \, \mathcal{H}^0
		\restrict \project{T}^{-1} \lIm \{ x \} \rIm, \\
		\| S_\varrho (x) \| & = \density^\vdim  ( \mu \restrict
		G_\varrho (a) \cap \cylinder { a}{ \varrho}{ \varrho}{ T} , \cdot
		) \, \mathcal{H}^0 \restrict \project{T}^{-1} \lIm \{ x \}
		\rIm,
	\end{align*}
	and by $g_\varrho : A_\varrho \to \rel$ the $\mathcal{H}^\vdim $
	measurable functions defined by
	\begin{gather*}
		g_\varrho (x) = \mathcal{G} ( R_\varrho (x), S_\varrho (x) )
		\quad \text{for $x \in A_\varrho$}.
	\end{gather*}
	By Remark \ref{remark:infima} there exist $\mathcal{H}^\vdim $ measurable
	subset $Y_\varrho$ of $A_\varrho$ such that
	\begin{gather*}
		2 \height_{q^\ast} ( \mu \restrict G_\varrho (a), a, \varrho,
		\varrho, P_\varrho ) \geq \varrho^{-\vdim /q} \| \dist (
		\cdot, \spt P_\varrho ) \|_{\Lp{q^\ast} ( \mu \restrict
		G_\varrho (a) \cap \cylinder {a}{\varrho}{\varrho}{T})} \\
		+ \varrho^{-\vdim /q} \| g_\varrho \|_{\Lp{q^\ast} (
		\mathcal{H}^\vdim \restrict Y_\varrho )} + \varrho^{-\vdim /q}
		\mathcal{H}^\vdim  ( T \cap \cball{}{\project{T}(a)}{\varrho}
		\without Y_\varrho )^{1/q}.
	\end{gather*}

	Possibly replacing $s$ by a smaller number, one may assume for $0 <
	\varrho < s$ that
	\begin{gather*}
		( 2 \height_{q^\ast} ( \mu \restrict G_\varrho (a), a,
		\varrho, \varrho, P_\varrho ) )^q \leq 2^{-\vdim -2}
		\unitmeasure{\vdim }
	\end{gather*}
	by Theorem \ref{thm:sobolev_poincare}\,\eqref{item:poincare_lorentz:lp} and
	also that
	\begin{gather*}
		\mu ( \cylinder {a}{ \varrho}{ \varrho}{ T } \without C_i )
		\leq 2^{-\vdim -2} \unitmeasure{\vdim } \varrho^\vdim .
	\end{gather*}
	Noting $C_i \cap \cylinder {a}{ \varrho/2}{ \varrho}{ T } \subset
	G_\varrho (a) \cap G_{\varrho/2} (a)$, one obtains directly from the
	additional assumptions on $s$ that
	\begin{gather*}
		\begin{aligned}
			\mathcal{H}^\vdim  ( T \cap
			\cball{}{\project{T}(a)}{\varrho} \without Y_\varrho)
			& \leq 2^{-\vdim -2} \unitmeasure{\vdim }
			\varrho^\vdim , \\
			\mathcal{H}^\vdim  ( T \cap
			\cball{}{\project{T}(a)}{\varrho/2} \without
			Y_{\varrho/2}) &\leq 2^{-\vdim -2}
			\unitmeasure{\vdim } \varrho^\vdim ,
		\end{aligned} \\
		\begin{aligned}
			& \phantom{\leq} \ \mathcal{H}^\vdim  ( \{ x \in
			Y_{\varrho/2} \cap Y_\varrho \with S_\varrho (x) \neq
			S_{\varrho/2} (x) \} ) \\
			& \leq \mathcal{H}^\vdim  \big ( \project{T} ( \{ x
			\in \cylinder {a}{\varrho/2}{\varrho}{T} \with
			\density^{\ast \vdim } ( \mu , x ) \geq 1 \} \without
			C_i ) \big ) \\
			& \leq \mu ( \cylinder {a} {\varrho} {\varrho}{ T }
			\without C_i ) \leq 2^{-\vdim -2} \unitmeasure{\vdim }
			\varrho^\vdim ,
		\end{aligned}
	\end{gather*}
	hence for $B_\varrho := Y_\varrho \cap Y_{\varrho/2} \cap \{ x \with
	S_\varrho (x) = S_{\varrho/2} (x) \}$
	\begin{gather*}
		\mathcal{H}^\vdim  ( B_\varrho ) \geq (1/4) \unitmeasure{\vdim
		} (\varrho/2)^\vdim  \quad \text{for $0 < \varrho < s$},
	\end{gather*}
	in particular
	\begin{gather*}
		\dmn R_\varrho = A_\varrho \supset Y_\varrho \supset B_\varrho
		\neq \emptyset, \quad \mathcal{G} ( P_\varrho, Q
		\mathcal{H}^\vdim \restrict T) \leq Q^{1/2} \varrho.
	\end{gather*}
	By integration over the set $B_\varrho$ with respect to
	$\mathcal{H}^\vdim$ one obtains
	\begin{align*}
		& \phantom{\leq} \ ( (1/4) \unitmeasure{\vdim }
		(\varrho/2)^\vdim  )^{1/q-1/\vdim } \mathcal{G}
		(P_\varrho,P_{\varrho/2}) \\
		& \leq \| g_\varrho \|_{\Lp{q^\ast} ( \mathcal{H}^\vdim
		\restrict Y_\varrho)} + \| g_{\varrho/2} \|_{\Lp{q^\ast} (
		\mathcal{H}^\vdim \restrict Y_{\varrho/2})} \\
		& \leq \varrho^{\vdim /q} 4 \big ( \height_{q^\ast} ( \mu
		\restrict G_\varrho (a), a, \varrho, \varrho, Q, T ) +
		\height_{q^\ast} ( \mu \restrict G_{\varrho/2} (a), \varrho/2,
		\varrho/2, Q, T) \big )
	\end{align*}
	for $0 < \varrho < s$. Therefore
	Theorem \ref{thm:sobolev_poincare}\,\eqref{item:poincare_lorentz:lp} implies
	\begin{gather*}
		\mathcal{G} (P_\varrho, P_{\varrho/2}) \leq \Gamma_2 \gamma
		\varrho^{1+\hoelder}
	\end{gather*}
	where $\Gamma_2 = 2^{4+\vdim /q+2/q-2/\vdim } \unitmeasure{\vdim
	}^{1/\vdim -1/q} \Gamma_1$, hence
	\begin{gather*}
		\mathcal{G} ( Q \mathcal{H}^\vdim  \restrict T, P_\varrho )
		\leq {\textstyle\sum_{i=0}^\infty} \mathcal{G} ( P_{2^{-i}
		\varrho}, P_{2^{-i-1} \varrho} ) \leq 2 \Gamma_2 \gamma
		\varrho^{1+\hoelder}
	\end{gather*}
	because $\mathcal{G} ( P_\varrho , Q \mathcal{H}^\vdim  \restrict T )
	\to 0$ as $\varrho \pluslim{0}$. From the definition of the $q^\ast$
	height of $\mu$ in $\cylinder {a}{\varrho}{\varrho}{T}$ one obtains
	\begin{gather*}
		\height_{q^\ast} ( \mu \restrict G_\varrho (a), a, \varrho,
		\varrho, Q \mathcal{H}^\vdim  \restrict T ) - \height_{q^\ast}
		( \mu \restrict G_\varrho (a), a, \varrho, \varrho, P_\varrho
		) \\
		\leq \varrho^{-\vdim /q} \big ( \mu ( \cylinder
		{a}{\varrho}{\varrho}{T} )^{1/q^\ast} + \mathcal{H}^\vdim  (
		Y_\varrho)^{1/q^\ast} \big ) \mathcal{G} ( Q \mathcal{H}^\vdim
		\restrict T, P_\varrho) \leq \Gamma_3 \gamma \varrho^\hoelder
	\end{gather*}
	for $0 < \varrho < s$ where $\Gamma_3 = \unitmeasure{\vdim
	}^{1/q^\ast} 2 ( Q + 1)^{1/q^\ast} 2 \Gamma_2$, hence
	\begin{gather*}
		\limsup_{\varrho \pluslim{0}} \varrho^{-\hoelder}
		\height_{q^\ast} ( \mu \restrict G_\varrho (a), a, \varrho,
		\varrho, Q \mathcal{H}^\vdim  \restrict T )
		\leq ( 8 \Gamma_1+ \Gamma_3 ) \gamma
	\end{gather*}
	by Theorem \ref{thm:sobolev_poincare}\,\eqref{item:poincare_lorentz:lp}.
	Combining this with the fact that
	\begin{gather*}
		\lim_{\varrho \pluslim{0}} \varrho^{-\hoelder-1-\vdim /q^\ast}
		\| \dist ( \cdot - a, T_a \mu ) \|_{\Lp{q^\ast} ( \mu
		\restrict \oball{}{a}{\varrho} \without G_\varrho (a))} = 0,
	\end{gather*}
	since $C_i \cap \oball{}{a}{\varrho} \subset G_\varrho (a)$ and
	$\alpha q^\ast + \vdim \leq \vdim^2/(\vdim-p)$, the conclusion
	follows.

	\eqref{item:limit_poincare:m} may be proved by a similar but simpler
	argument using
	Theorem \ref{thm:sobolev_poincare}\,\eqref{item:poincare_lorentz:k} and
	\cite[2.5]{snulmenn.isoperimetric}
	instead of
	Theorem \ref{thm:sobolev_poincare}\,\eqref{item:poincare_lorentz:lp} and
	\cite[2.9,
	2.10]{snulmenn.isoperimetric}.
\end{proof}
\begin{remark} \label{remark:lorentz2}
	As in Remark \ref{remark:lorentz1}, in \eqref{item:limit_poincare:m} the
	$\Lp{q}$ norm can be replaced by $L^{\vdim ,1}$, in particular $\vdim
	=q=1$ is admissible. The latter fact can be derived without the use of
	Lorentz spaces, of course.
\end{remark}
\begin{remark} \label{remark:limit_poincare}
	If $1 \leq p < \vdim $, $1 \leq q_1 \leq q_2 < \infty$,
	$\frac{1}{\hoelder} \cdot \frac{\vdim p}{\vdim -p} < q_2$, then the
	conclusion of \eqref{item:limit_poincare:lpq} fails for some $\mu$; in
	fact one can assume $q_1=q_2$ possibly enlarging $q_1$ and then take
	$\hoelder_2 = \hoelder$ and $\hoelder_1$ slightly larger
	than $\hoelder_2$ in \cite[1.2]{snulmenn.isoperimetric}.
	Clearly, also in \eqref{item:limit_poincare:m} the assumption $p =
	\vdim $ cannot be weakened.
\end{remark}
\medskip
\noindent
{\small
Ulrich Menne \newline
ETH Z\"urich, R\"amistrasse 101, CH-8092 Z\"urich, Switzerland \newline
{\tt Ulrich.Menne@math.ethz.ch}}
\bibliographystyle{alpha}
\bibliography{UlrichMenne2v2}

\end{document}